\DeclareMathOperator{\Trace}{Trace}
\DeclareMathOperator{\TR}{TR}
\NewDocumentCommand \tensor {O{}m} {\boldsymbol{#1\mathscr{\MakeUppercase{#2}}}} % using package {eucal} with option [mathscr]
\newcommand{\mat}[1]{\mathbf{#1}}
\newcommand{\bb}[1]{\mathbb{#1}}
\newcommand{\bigO}[1]{\mathcal{O}\left( #1 \right)}
\newcommand{\Lref}[1]{\hyperref[#1]{\Cref{#1}}}
\newcommand{\tabincell}[2]{\begin{tabular}{@{}#1@{}}#2\end{tabular}}
\begin{document}
\begin{sloppypar}

\title{Practical Alternating Least Squares for Tensor Ring Decomposition%\thanks{Grants or other notes
%about the article that should go on the front page should be
%placed here. General acknowledgments should be placed at the end of the article.}
}
%\subtitle{Do you have a subtitle?\\ If so, write it here}

\titlerunning{Practical TR-ALS}        % if too long for running head

\author{Yajie Yu         \and
        Hanyu Li %etc.
}

%\authorrunning{Short form of author list} % if too long for running head

\institute{Yajie Yu \at
              College of Mathematics and Statistics, Chongqing University, Chongqing 401331, P.R. China \\
              \email{zqyu@cqu.edu.cn}           %  \\
%             \emph{Present address:} of F. Author  %  if needed
           \and
           Hanyu Li \at
              College of Mathematics and Statistics, Chongqing University, Chongqing 401331, P.R. China\\
              \email{lihy.hy@gmail.com or hyli@cqu.edu.cn}
}

\date{Received: date / Accepted: date}
% The correct dates will be entered by the editor

%%%%%%%%%%%%%%%%%%%%%%%%%%%%%%%%%%%%%%%%%%%%%%%%%%%%%%%%%%%%%%%%%%%

\maketitle

\begin{abstract}
Tensor ring (TR) decomposition has been widely applied as an effective approach in a variety of applications to discover the hidden low-rank patterns in multidimensional data. 
A well-known method for TR decomposition is the alternating least squares (ALS). However, it often suffers from the notorious intermediate data explosion issue, especially for large-scale tensors.
In this paper, we provide two strategies to tackle this issue and design three ALS-based algorithms. 
Specifically, the first strategy is used to simplify the calculation of the coefficient matrices of the normal equations for the ALS subproblems, which takes full advantage of the structure of the coefficient matrices of the subproblems and hence makes the corresponding algorithm perform much better than the regular ALS method in terms of computing time. The second strategy is to stabilize the ALS subproblems by QR factorizations on TR-cores, and hence the corresponding algorithms are more numerically stable compared with our first algorithm. Extensive numerical experiments on synthetic and real data are given to illustrate and confirm the above results. In addition, we also present the complexity analyses of the proposed algorithms.

\keywords{tensor ring decomposition \and alternating least squares \and normal equation \and QR factorization \and tensor product \and inner product}
% \PACS{PACS code1 \and PACS code2 \and more}
\subclass{15A69 \and 49M27 \and 65F55 \and 68W25}
\end{abstract}

%%=================%%
%%  Introduction   %%
%%=================%%
\section{Introduction}
\label{sec:introduction}
\emph{Tensor ring} (TR) decomposition is a simple but powerful tensor network for analyzing and interpreting latent patterns for multidimensional data, i.e., tensors, due to its excellent compression and data representation capacities \cite{zhao2016TensorRing}. 
It is also referred to as the tensor chain format in previous mathematics literature \cite{khoromskij2011DlogQuantics}, or matrix product states format with periodic boundaries in physics literature \cite{affleck1988ValenceBond,perez-garcia2007MatrixProduct}. 
Specifically, the decomposition aims to represent a high-order tensor by a sequence of 3rd-order tensors that are multiplied circularly, and the specific format of element-wise form for the tensor $\tensor{X} \in \bb{R}^{I_1 \times I_2 \times \cdots \times I_N}$ is:
\begin{align*}
	\tensor{X}(i_1, \cdots, i_N)&={\Trace} \left(\mat{G}_1(i_1) \mat{G}_2(i_2) \cdots \mat{G}_N(i_N)\right)
	={\Trace} \left(\prod_{n=1}^N \mat{G}_n(i_n)\right),
\end{align*}
where $\mat{G}_n(i_n) = \tensor{G}_n(:,i_n,:) \in \bb{R}^{R_n \times R_{n+1}}$ is the $i_n$-th \emph{lateral slice} of the \emph{core tensor (TR-core)} $\tensor{G}_n \in \bb{R}^{R_n \times I_n \times R_{n+1}}$. Note that a \emph{slice} is a 2nd-order section, i.e., a matrix, of a tensor obtained by fixing all the tensor indices but two, and $ R_{N+1}=R_{1} $. The sizes of TR-cores, i.e., $R_k$ with $k=1,\cdots,N$, are called \emph{TR-ranks}. Additionally, we use the notation $\TR \left( \{\tensor{G}_n\}_{n=1}^N \right)$ to denote the TR decomposition of a tensor.

Besides TR decomposition, there exist a number of other tensor decompositions, such as CANDECOMP/PARAFAC (CP) decomposition \cite{carroll1970AnalysisIndividual,harshman1970FoundationsPARAFAC}, Tucker decomposition \cite{tucker1966MathematicalNotes} and tensor train (TT) decomposition \cite{oseledets2011TensorTrainDecomposition}.
Among them, finding the optimal CP decomposition of a tensor is an NP-hard problem, Tucker decomposition always suffers from the curse of dimensionality, and TT decomposition requires the rank constraint and strict order.
TR decomposition overcomes these deficiencies well and has become popular in recent years.

The problem of fitting $\TR \left( \{\tensor{G}_n\}_{n=1}^N \right)$ to a tensor $\tensor{X}$ can be written as the following minimization problem:
\begin{equation}
	\label{eq:trmin}
	\mathop{\arg\min}_{\tensor{G}_1, \cdots, \tensor{G}_N} \left\| \TR \left( \{\tensor{G}_n\}_{n=1}^N \right) - \tensor{X} \right\|_F,
\end{equation} 
where $\| \cdot \|_F$ denotes the Frobenius norm of a matrix or tensor. 
Deterministic algorithms for computing the above TR decomposition can be mainly divided into two categories \cite{zhao2016TensorRing}. One is SVD-based, and another is the alternating least squares (ALS). 
We use the abbreviation TR-SVD and TR-ALS for these two methods in the following text. 
The former involves the computation of a series of SVDs of unfolding matrices and the latter needs to compute the pseudoinverses of the coefficient matrices in the ALS subproblems within iterations. 
Clearly, both of them are computationally prohibitive for large-scale tensors. 
See \cite{zhao2016TensorRing} and \cite{mickelin2020AlgorithmsComputing} for further details on deterministic algorithms for TR decomposition.

Randomized algorithms for TR decomposition are effective methods to deal with the problem mentioned above, which can reduce the computational complexities of deterministic algorithms and the communications between different levels of the memory hierarchy, and have been developed in a huge number of works.
For more specific, Yuan et al. \cite{yuan2019RandomizedTensor} devised a method that first applies a randomized Tucker decomposition, followed by a TR decomposition of the core tensor using either TR-SVD or TR-ALS. Then, the TR cores are contracted appropriately. 
Later, Ahmadi-Asl et al. \cite{ahmadi-asl2020RandomizedAlgorithms} developed several randomized variants of the deterministic TR-SVD by replacing the SVDs with their randomized counterparts.
As for TR-ALS, Malik and Becker \cite{malik2021SamplingBasedMethod} proposed the TR-ALS-Sampled, which uses the leverage score sampling to reduce the size of the ALS subproblems.
Moreover, Malik \cite{malik2022MoreEfficient} also provided a new approach to approximate the leverage scores and devised the corresponding sampling algorithm. 
Very recently, we proposed two sketching-based randomized algorithms for TR decomposition in \cite{yu2022PracticalSketchingBased} and also suggested an algorithmic framework based on random projection.

Although the ALS-based randomized algorithms can reduce the computational complexities greatly, like deterministic algorithms, they also usually have to directly solve the ALS subproblems eventually and hence may suffer from the intermediate data explosion issue. In \cite{yu2022PracticalSketchingBased}, we found the special structure of the coefficient matrices of the ALS subproblems by defining a new tensor product called \emph{subchain product}. It offers a possibility to tackle the above issue. 
With the structure and a property of subchain product, we first propose a strategy to accelerate the calculation of the coefficient matrices of the normal equations for the ALS subproblems. This is similar to what is done in CP-ALS for CP decomposition, where the property of Khatri–Rao product is employed to simplify the calculations \cite{carroll1970AnalysisIndividual,harshman1970FoundationsPARAFAC,kolda2009TensorDecompositions}. Then, we investigate the QR factorizations of the coefficient matrices of the ALS subproblems via the structure and defining the QR factorization for the 3rd-order tensor to stabilize and facilitate the subproblems. This strategy is similar to the one for CP decomposition adopted in \cite{minster2021CPDecomposition}. Tracing the source, it is the famous idea for stabilizing the least squares (LS) problem. As a result, the algorithm with this strategy is more stable than the one with the first strategy. Furthermore, we also combine these 
two algorithms to balance the running time and stability.

The remainder of this paper is organized as follows. 
\Cref{sec:preliminaries} introduces some necessary definitions, propositions, and TR-ALS. 
In \Cref{sec:algorithms}, we propose our main algorithms and analyze their computational complexities. 
\Cref{sec:experiments} is devoted to numerical experiments to illustrate and confirm our
methods. 
Finally, the concluding remarks of the whole paper are presented.

%%=================%%
%%  Preliminaries  %%
%%=================%%
\section{Preliminaries}
\label{sec:preliminaries}
%In this section, we briefly describe the relevant information regarding tensors and the TR decomposition, focusing on TR-ALS. 

%Firstly, we will introduce two major tensor operations frequently used throughout this paper, namely unfolding and multiplying a tensor with a matrix.
Before presenting the necessary definitions regarding tensors, we %first list some notations. 
denote $[I] \xlongequal{def} \{ 1, \cdots, I \}$ for a positive integer $I$, and set  $\overline{i_1 i_2 \cdots i_N} \xlongequal{def} 1 + \sum_{n=1}^N(i_n-1)\prod_{j=1}^{n-1}I_j$ for the indices $i_1 \in [I_1], \cdots, i_N \in [I_N]$. 

\begin{definition}[Classical Mode-$n$ Unfolding]
	The \textbf{classical mode-$n$ unfolding} of a tensor $\tensor{X} \in \bb{R}^{I_1 \times I_2 \cdots \times I_N}$ is the matrix $\mat{X}_{(n)}$ of size $I_n \times \prod_{j \ne n} I_j$ defined element-wise via
	\begin{equation*}
	\mat{X}_{(n)}(i_n, \overline{i_1 \cdots i_{n-1} i_{n+1} \cdots i_N})=\tensor{X}(i_1, \cdots, i_N).
	\end{equation*}
\end{definition}

\begin{definition}[Mode-$n$ Unfolding]
	The \textbf{mode-$n$ unfolding} of a tensor $\tensor{X} \in \bb{R}^{I_1 \times I_2 \cdots \times I_N}$ is the matrix $\mat{X}_{[n]}$ of size $I_n \times \prod_{j \ne n} I_j$ defined element-wise via
	\begin{equation*}
	\mat{X}_{[n]}(i_n, \overline{i_{n+1} \cdots i_N i_1 \cdots i_{n-1}})=\tensor{X}(i_1, \cdots, i_N).
	\end{equation*}
\end{definition}

\begin{definition}[$n$-Unfolding]
\label{def:n_unfolding}
	The \textbf{$n$-unfolding} of a tensor $\tensor{X} \in \bb{R}^{I_1 \times I_2 \cdots \times I_N}$ is the matrix $\mat{X}_{<n>}$ of size $\prod_{j=1}^{n} I_j \times \prod_{j=n+1}^{N} I_j$ defined element-wise via
	\begin{equation*}
	\mat{X}_{<n>}(\overline{i_1, \cdots, i_n}, \overline{i_{n+1} \cdots i_N})=\tensor{X}(i_1, \cdots, i_N).
	\end{equation*}
\end{definition}

\begin{definition}[Outer Product]
\label{def:outer_product}
	For $\tensor{A} \in \bb{R}^{I_1 \times \cdots \times I_N}$ and $\tensor{B} \in \bb{R}^{J_1 \times \cdots \times J_M}$, % be an $N$th-order tensor and an $M$th-order tensor, respectively.
	their \textbf{outer product} %of $\tensor{A}$ and $\tensor{B}$ 
	is a tensor of size $I_1 \times \cdots \times I_N \times J_1 \times \cdots \times J_M$ denoted by $\tensor{A} \circ \tensor{B}$ and defined element-wise via
	\begin{equation*}
		(\tensor{A} \circ \tensor{B})(i_1, \cdots, i_N, j_1, \cdots, j_M) = \tensor{A}(i_1, \cdots, i_N) \tensor{B}(j_1, \cdots, j_M). 
	\end{equation*}
\end{definition}

\begin{definition}[Contracted Tensor Product]\label{def:ct_product}
	For $\tensor{A} \in \bb{R}^{I_1 \times \cdots \times I_N}$ and $\tensor{B} \in \bb{R}^{J_1 \times \cdots \times J_M}$ % be an $N$th-order tensor and an $M$th-order tensor, respectively,
	with $I_n = J_m$, their \textbf{contracted tensor product} %of $\tensor{A}$ and $\tensor{B}$ 
	is a %an $(N+M-2)$nd-order 
	tensor of size $I_1 \times \cdots \times I_{n-1} \times I_{n+1} \times \cdots \times I_N \times J_1 \times \cdots \times J_{m-1} \times J_{m+1} \times \cdots \times J_M$ denoted by $\tensor{A} \times_n^m \tensor{B}$ and defined element-wise via
	\begin{align*}
		&(\tensor{A} \times_n^m \tensor{B})(i_1, \cdots, i_{n-1}, i_{n+1}, \cdots, i_N, j_1, \cdots, j_{m-1}, j_{m+1}, \cdots, j_M) \\
		&\qquad = \sum_{i=1}^{I_n}\tensor{A}(i_1, \cdots, i_{n-1}, i, i_{n+1}, \cdots, i_N) \tensor{B}(j_1, \cdots, j_{m-1}, i, j_{m+1}, \cdots, j_M).
	\end{align*}
\end{definition}

\begin{definition}[TTM]
    For $\tensor{X} \in \bb{R}^{I_1 \times I_2 \cdots \times I_N}$ and $\mat{U} \in \bb{R}^{J \times I_n}$, their \textbf{tensor-times-matrix (TTM) multiplication} 
    % of a tensor $\tensor{X} \in \bb{R}^{I_1 \times I_2 \cdots \times I_N}$ and a matrix $\mat{U} \in \bb{R}^{J \times I_n}$ 
    is a tensor of size $I_1 \times \cdots \times I_{n-1} \times J \times I_{n+1} \times \cdots \times I_N$ denoted by $\tensor{X} \times_n \mat{U}$ and defined element-wise via
	\begin{equation*}
	(\tensor{X} \times_n \mat{U})(i_1, \cdots, i_{n-1}, j, i_{n+1}, \cdots, i_N) = \sum_{i_n = 1}^{I_n} \tensor{X}(i_1, \cdots, i_n, \cdots, i_N) \mat{U}(j, i_n).
	\end{equation*}
\end{definition}

It is easy to find that TTM is a special case of the contracted tensor product. Furthermore, multiplying an $N$th-order tensor by multiple matrices on distinct modes is known as \emph{Multi-TTM}, whose computation can be performed by using a sequence of individual mode TTMs, and can be done in any order. In particular, multiplying an $N$th-order tensor by matrices $\mat{U}_j$ with $j=1,\cdots, N$ in each mode implies $\tensor{Y} = \tensor{X} \times_1 \mat{U}_1 \times_2 \mat{U}_2 \cdots \times_N \mat{U}_N$. Its % This is expressed in the 
mode-$n$ unfolding can be presented as follows.

\begin{proposition}
\label{prop:ttm}
	Let the tensor $\tensor{Y}$ have the form $\tensor{Y} = \tensor{X} \times_1 \mat{U}_1 \times_2 \mat{U}_2 \cdots \times_N \mat{U}_N$, where $\tensor{X} \in \bb{R}^{I_1 \times I_2 \cdots \times I_N}$, and $\mat{U}_n \in \bb{R}^{J_n \times I_n}$ for $n = {1, \cdots, N}$. Then
	\begin{align*}
	\mat{Y}_{[n]} = \mat{U}_n \mat{X}_{[n]} \left( \mat{U}_{n-1} \otimes \cdots \otimes \mat{U}_{1} \otimes \mat{U}_{N} \otimes \cdots \otimes \mat{U}_{n+1} \right)^\intercal.
	\end{align*}
\end{proposition}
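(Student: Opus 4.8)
The plan is to prove the identity entrywise, reducing everything to the scalar definitions already in hand. First I would iterate the TTM definition over all $N$ modes; since the individual mode multiplications act on distinct indices and commute, this yields the element-wise form of the Multi-TTM,
\[
	\tensor{Y}(j_1, \cdots, j_N) = \sum_{i_1=1}^{I_1} \cdots \sum_{i_N=1}^{I_N} \tensor{X}(i_1, \cdots, i_N) \prod_{k=1}^N \mat{U}_k(j_k, i_k).
\]
This is the quantity I must reproduce from the matrix right-hand side after unfolding. Applying \Cref{def:n_unfolding}, the left-hand side entry is $\mat{Y}_{[n]}(j_n, \overline{j_{n+1} \cdots j_N j_1 \cdots j_{n-1}}) = \tensor{Y}(j_1, \cdots, j_N)$, so it suffices to show the right-hand side produces the same value.

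Next I would expand the matrix product in indices. Writing $\mat{W} = \mat{U}_{n-1} \otimes \cdots \otimes \mat{U}_1 \otimes \mat{U}_N \otimes \cdots \otimes \mat{U}_{n+1}$, the $(j_n, \overline{j_{n+1} \cdots j_N j_1 \cdots j_{n-1}})$ entry of $\mat{U}_n \mat{X}_{[n]} \mat{W}^\intercal$ is
\[
	\sum_{i_n} \sum_{c'} \mat{U}_n(j_n, i_n)\, \mat{X}_{[n]}(i_n, c')\, \mat{W}\!\left(\overline{j_{n+1} \cdots j_N j_1 \cdots j_{n-1}},\, c'\right),
\]
where $c'$ ranges over the column multi-indices of $\mat{X}_{[n]}$ and I have used $\mat{W}^\intercal(c', c) = \mat{W}(c, c')$. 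The crux is the indexing rule for $\mat{W}$. Under the paper's little-endian convention for $\overline{\cdot}$ (in which the first listed index varies fastest), the rightmost Kronecker factor carries the fastest-varying index; the key lemma I would establish, by peeling off one factor at a time or a short induction on the number of factors, is
\[
	\mat{W}\!\left(\overline{j_{n+1} \cdots j_N j_1 \cdots j_{n-1}},\ \overline{i_{n+1} \cdots i_N i_1 \cdots i_{n-1}}\right) = \prod_{k \ne n} \mat{U}_k(j_k, i_k).
\]

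The decisive observation is that the factor ordering $\mat{U}_{n-1} \otimes \cdots \otimes \mat{U}_1 \otimes \mat{U}_N \otimes \cdots \otimes \mat{U}_{n+1}$, read from right to left, produces exactly the index sequence $i_{n+1}, i_{n+2}, \cdots, i_N, i_1, \cdots, i_{n-1}$, which is precisely the column multi-index $\overline{i_{n+1} \cdots i_N i_1 \cdots i_{n-1}}$ assigned by the mode-$n$ unfolding. Hence the nonzero contributions occur at $c' = \overline{i_{n+1} \cdots i_N i_1 \cdots i_{n-1}}$, where $\mat{X}_{[n]}(i_n, c') = \tensor{X}(i_1, \cdots, i_N)$. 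Substituting and recombining the single factor $\mat{U}_n(j_n, i_n)$ with $\prod_{k \ne n} \mat{U}_k(j_k, i_k)$ restores the full product $\prod_{k=1}^N \mat{U}_k(j_k, i_k)$; summing over all $i_k$ then recovers $\tensor{Y}(j_1, \cdots, j_N)$, which matches the left-hand side. I expect the only real difficulty to be the index bookkeeping: aligning the column-major flattening $\overline{\cdot}$ with the conventional row-major statement of the Kronecker indexing rule, and confirming that the cyclic reversal of the factor list together with the transpose lands exactly on the cyclically shifted ordering $\overline{i_{n+1} \cdots i_N i_1 \cdots i_{n-1}}$. Everything else is routine substitution.
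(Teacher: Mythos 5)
Your argument is correct. Note, though, that the paper offers no proof of \Cref{prop:ttm} at all: it is stated as a known fact about Multi-TTM (the cyclically shifted analogue of the standard Kolda--Bader unfolding identity), so there is nothing to compare against except the statement itself. Your elementwise verification supplies exactly the missing content, and you put your finger on the only nontrivial point: under the paper's convention $\overline{i_1\cdots i_N}=1+\sum_n (i_n-1)\prod_{j<n}I_j$ the first listed index varies fastest, while in a Kronecker product the rightmost factor's index varies fastest, so the reversed, cyclically shifted factor list $\mat{U}_{n-1}\otimes\cdots\otimes\mat{U}_1\otimes\mat{U}_N\otimes\cdots\otimes\mat{U}_{n+1}$ is precisely what makes the column multi-index of $\mat{W}$ coincide with the column multi-index $\overline{i_{n+1}\cdots i_N i_1\cdots i_{n-1}}$ of $\mat{X}_{[n]}$. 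One phrasing to tighten: saying the ``nonzero contributions occur at'' $c'=\overline{i_{n+1}\cdots i_N i_1\cdots i_{n-1}}$ is slightly misleading, since every column index is of that form for some tuple $(i_k)_{k\ne n}$; what you are really doing is reparametrizing the sum over $c'$ as a sum over that tuple. With that cosmetic fix, the proof is complete and would serve as a valid justification of the proposition.
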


We now detail the TR-ALS mentioned in \Cref{sec:introduction}, which is the most popular algorithm for TR decomposition. 
To achieve this, we need the following definition.
\begin{definition}
\label{def:subchain_tensor}
	Let $\tensor{X} = \TR \left( \{\tensor{G}_n\}_{n=1}^N \right) \in \bb{R}^{I_1 \times I_2 \cdots \times I_N}$. The \textbf{subchain tensor} $\tensor{G}^{\ne n} \in \bb{R}^{R_{n+1} \times \prod_{j \ne n} I_j \times R_n}$ is the merging of all TR-cores expect the $n$-th one and can be written slice-wise via
	\begin{equation*}
	\mat{G}^{\ne n}(\overline{i_{n+1} \cdots i_N i_1 \cdots i_{n-1}})=\prod_{j=n+1}^{N} \mat{G}_j(i_j) \prod_{j=1}^{n-1} \mat{G}_j(i_j).
	\end{equation*}
\end{definition}

Thus, according to Theorem 3.5 in \cite{zhao2016TensorRing}, the objective in \eqref{eq:trmin} can be rewritten as the following $N$ subproblems
\begin{equation}
\label{eq:tr_als}
\mathop{\arg\min}_{\mat{G}_{n(2)}} \left\|\mat{G}_{[2]}^{\ne n} \mat{G}_{n(2)}^\intercal-\mat{X}_{[n]}^\intercal \right\|_F,\ n=1,\cdots, N.
\end{equation}
The so-called TR-ALS is a method that keeps all cores fixed except the $n$-th one and finds the solution to the LS problem \eqref{eq:tr_als} with respect to it. We summarize the method in \Cref{alg:tr_als}.

\begin{algorithm}
\caption{TR-ALS \cite{zhao2016TensorRing}}
\label{alg:tr_als}
	\textbf{Input:} $\tensor{X} \in \bb{R}^{I_1 \times \cdots \times I_N}$, TR-ranks $R_1, \cdots, R_N$
	
	\textbf{Output:} TR-cores $\{ \tensor{G}_n \in \bb{R}^{R_n \times I_n \times R_{n+1}} \}_{n=1}^N$
	\begin{algorithmic}[1]\footnotesize
		\State Initialize TR-cores $\tensor{G}_1, \cdots, \tensor{G}_N$ \label{line:als_init}
		\Repeat
		\For{$n = 1, \cdots, N$}
		\State Compute $\mat{G}_{[2]}^{\ne n}$ from cores \label{line:als_subchain}
		\State Update $\tensor{G}_n = \mathop{\arg\min}_{\tensor{Z}} \left\| \mat{G}_{[2]}^{\ne n} \mat{Z}_{(2)}^\intercal - \mat{X}_{[n]}^\intercal \right\|_F$ \label{line:als_ls}
		\EndFor
		\Until{termination criteria met}
	\end{algorithmic}
\end{algorithm}

The matrix $\mat{G}_{[2]}^{\ne n}$ in \eqref{eq:tr_als}, i.e., the subchain tensor $\tensor{G}^{\ne n}$, has a special structure, which can be revealed elegantly with the subchain product defined as follows. 
\begin{definition}[Subchain Product \cite{yu2022PracticalSketchingBased}] 
\label{def:subchain_product}
	For $\tensor{A} \in \bb{R}^{I_1 \times J_1 \times K}$ and $\tensor{B} \in \bb{R}^{K \times J_2 \times I_2}$, % be two 3rd-order tensors, and $\mat{A}(j_1)$ and $\mat{B}(j_2)$ be the $j_1$-th and $j_2$-th lateral slices of $\tensor{A}$ and $\tensor{B}$, respectively. 
	their mode-2 \textbf{subchain product} %of $\tensor{A}$ and $\tensor{B}$ 
	is a tensor of size $I_1 \times J_1 J_2 \times I_2$ denoted by $\tensor{A} \boxtimes_2 \tensor{B}$ and  defined as 
	\begin{equation*}
	(\tensor{A} \boxtimes_2 \tensor{B})(\overline{j_1 j_2}) = \tensor{A}(j_1)\tensor{B}(j_2),
	\end{equation*}
	where $\mat{A}(j_1)$ and $\mat{B}(j_2)$ are the $j_1$-th and $j_2$-th lateral slices of $\tensor{A}$ and $\tensor{B}$, respectively.
	That is, with respect to the correspondence on indices, the lateral slices of $\tensor{A} \boxtimes_2 \tensor{B}$ are the classical matrix multiplications of the lateral slices of $\tensor{A}$ and $\tensor{B}$. The mode-1 and mode-3 subchain products can be defined similarly.
\end{definition}

Thus, $\tensor{G}^{\ne n}$ can be expressed as \cite{yu2022PracticalSketchingBased}:
\begin{equation}
\label{eq:subchain_new}
	\tensor{G}^{\ne n} = \tensor{G}_{n+1} \boxtimes_2 \cdots \boxtimes_2 \tensor{G}_{N} \boxtimes_2 \tensor{G}_{1} \boxtimes_2 \cdots \boxtimes_2 \tensor{G}_{n-1}.
\end{equation}
With this expression and the property of subchain product given in \Cref{prop:slice_kron}, we devised some randomized algorithms
for TR decomposition based on the Kronecker
sub-sampled randomized Fourier transform and TensorSketch \cite{yu2022PracticalSketchingBased}.
\begin{proposition}[\cite{yu2022PracticalSketchingBased}]
\label{prop:slice_kron}
	Let $\tensor{A} \in \bb{R}^{I_1 \times J_1 \times K}$ and $\tensor{B} \in \bb{R}^{K \times J_2 \times I_2}$ be two 3rd-order tensors, and $\mat{A} \in \bb{R}^{R_1 \times J_1}$ and $\mat{B} \in \bb{R}^{R_2 \times J_2}$ be two matrices. Then
	\begin{equation*}
	(\tensor{A} \times_2 \mat{A}) \boxtimes_2 (\tensor{B} \times_2 \mat{B}) = (\tensor{A} \boxtimes_2 \tensor{B}) \times_2 (\mat{B} \otimes \mat{A}).
	\end{equation*}
\end{proposition}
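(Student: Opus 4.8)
The plan is to establish the identity slice-wise with respect to the lateral (mode-$2$) slices. Both sides are $3$rd-order tensors of size $I_1 \times R_1 R_2 \times I_2$, so it suffices to check that their $\overline{r_1 r_2}$-th lateral slices agree for every $r_1 \in [R_1]$ and $r_2 \in [R_2]$. Two facts drive the computation: by the definition of the TTM, multiplication $\times_2 \mat{U}$ replaces the lateral slices of a tensor by linear combinations of them whose coefficients are the entries of the rows of $\mat{U}$; and by \Cref{def:subchain_product}, the lateral slices of a subchain product are the matrix products of the corresponding lateral slices of the factors. I would therefore reduce the whole statement to an equality between two matrices in $\bb{R}^{I_1 \times I_2}$.

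For the left-hand side, I would first record that the $r_1$-th lateral slice of $\tensor{A} \times_2 \mat{A}$ is $\sum_{j_1 = 1}^{J_1} \mat{A}(r_1, j_1) \tensor{A}(j_1)$, and likewise that the $r_2$-th lateral slice of $\tensor{B} \times_2 \mat{B}$ is $\sum_{j_2 = 1}^{J_2} \mat{B}(r_2, j_2) \tensor{B}(j_2)$. Applying the subchain product and expanding the product of the two sums by bilinearity of matrix multiplication then gives the $\overline{r_1 r_2}$-th lateral slice of the left-hand side as the double sum $\sum_{j_1, j_2} \mat{A}(r_1, j_1) \mat{B}(r_2, j_2)\, \tensor{A}(j_1) \tensor{B}(j_2)$.

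For the right-hand side, the $\overline{j_1 j_2}$-th lateral slice of $\tensor{A} \boxtimes_2 \tensor{B}$ is $\tensor{A}(j_1) \tensor{B}(j_2)$, and multiplying by $\mat{B} \otimes \mat{A}$ in mode $2$ forms the linear combination $\sum_{j_1, j_2} (\mat{B} \otimes \mat{A})(\overline{r_1 r_2}, \overline{j_1 j_2})\, \tensor{A}(j_1) \tensor{B}(j_2)$. The essential step is the index identity $(\mat{B} \otimes \mat{A})(\overline{r_1 r_2}, \overline{j_1 j_2}) = \mat{A}(r_1, j_1)\mat{B}(r_2, j_2)$, which I would verify directly from the block structure of the Kronecker product together with the linearization convention $\overline{i_1 i_2} = 1 + (i_1 - 1) + (i_2 - 1) I_1$; this is exactly what forces $\mat{B} \otimes \mat{A}$, rather than $\mat{A} \otimes \mat{B}$, to be the correct factor. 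Substituting this identity turns the right-hand side into the same double sum obtained above, which completes the proof.

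The main obstacle is purely bookkeeping: matching the ordering of the merged mode-$2$ index $\overline{j_1 j_2}$ used by the subchain product with the row/column ordering implicit in the Kronecker product. Because the paper's $\overline{\cdots}$ convention makes the first index the fast one, the first factor $\mat{A}$ must occupy the inner block of the Kronecker product, which is precisely why the statement features $\mat{B} \otimes \mat{A}$. Pinning down this ordering via the explicit formula for $\overline{\cdots}$ is the only place where an error could realistically arise; once it is settled, the remainder is a routine application of bilinearity.
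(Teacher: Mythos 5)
Your argument is correct: the slice-wise reduction, the computation of the lateral slices of $\tensor{A}\times_2\mat{A}$ and $\tensor{B}\times_2\mat{B}$, and the index identity $(\mat{B}\otimes\mat{A})(\overline{r_1 r_2},\overline{j_1 j_2})=\mat{A}(r_1,j_1)\mat{B}(r_2,j_2)$ (which, under the paper's first-index-fast convention $\overline{i_1 i_2}=i_1+(i_2-1)I_1$, is indeed the reason the Kronecker factors appear in the order $\mat{B}\otimes\mat{A}$) all check out. Note that the paper itself states this proposition by citation to \cite{yu2022PracticalSketchingBased} and gives no proof, so there is nothing in-paper to compare against; your lateral-slice verification is the natural and expected argument, and it is complete.
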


%%=================%%
%%    Algorithms   %%
%%=================%%
\section{Proposed Methods}
\label{sec:algorithms} 
In this section, we will present three practical ALS-based algorithms for TR decomposition. One is built on normal equation and another one is based on QR factorization. The third one is the combination of the preceding two algorithms. In addition, we also present a by-product on inner product.

%%==================
\subsection{TR-ALS based on normal equation}
Recall that the LS problem from \eqref{eq:tr_als} is typically solved by using the normal equation, i.e., 
\begin{equation*}
	\mat{X}_{[n]} \mat{G}_{[2]}^{\ne n} =  \mat{G}_{n(2)} \left( (\mat{G}_{[2]}^{\ne n})^\intercal \mat{G}_{[2]}^{\ne n} \right).
\end{equation*}
From \eqref{eq:subchain_new}, it follows that $(\mat{G}_{[2]}^{\ne n})^\intercal \mat{G}_{[2]}^{\ne n}$ can be written as $(\tensor{G}_{n+1} \boxtimes_2 \cdots \boxtimes_2 \tensor{G}_{N} \boxtimes_2 \tensor{G}_{1} \boxtimes_2 \cdots \boxtimes_2 \tensor{G}_{n-1})_{[2]}^\intercal (\tensor{G}_{n+1} \boxtimes_2 \cdots \boxtimes_2 \tensor{G}_{N} \boxtimes_2 \tensor{G}_{1} \boxtimes_2 \cdots \boxtimes_2 \tensor{G}_{n-1})_{[2]}$. To compute it efficiently, we now propose a property of subchain product using the outer product, contracted tensor product and $n$-unfolding. 

\begin{proposition}
\label{prop:subchain_gram}
	Let $\tensor{A} \in \bb{R}^{I_1 \times J \times K_1}$, $\tensor{B} \in \bb{R}^{K_1 \times R \times L_1}$, $\tensor{C} \in \bb{R}^{I_2 \times J \times K_2}$ and $\tensor{D} \in \bb{R}^{K_2 \times R \times L_2}$ be 3rd-order tensors. Then
	\begin{equation*}
		(\tensor{A} \boxtimes_2 \tensor{B})_{[2]}^\intercal (\tensor{C} \boxtimes_2 \tensor{D})_{[2]} = \left( (\sum_{r=1}^{R} \tensor{B}(r)^\intercal \circ \tensor{D}(r)^\intercal) \times_{2,4}^{1,3} (\sum_{j=1}^{J} \tensor{A}(j)^\intercal \circ \tensor{C}(j)^\intercal) \right)_{<2>}.
	\end{equation*}
\end{proposition}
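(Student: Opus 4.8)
The plan is to prove the identity \emph{entrywise}: I would compute a generic entry of the matrix on each side and verify that the two scalars coincide once the index maps of the two matricizations are pinned down. Throughout I would use that $\tensor{A} \boxtimes_2 \tensor{B} \in \bb{R}^{I_1 \times JR \times L_1}$ and $\tensor{C} \boxtimes_2 \tensor{D} \in \bb{R}^{I_2 \times JR \times L_2}$, whose mode-$2$ unfoldings carry the compound row index $\overline{jr}$ with $j \in [J]$, $r \in [R]$.

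\emph{Left-hand side.} By \Cref{def:subchain_product} the $\overline{jr}$-th lateral slice of $\tensor{A}\boxtimes_2\tensor{B}$ is $\tensor{A}(j)\tensor{B}(r)$, whose $(i_1,l_1)$ entry is $\sum_{k_1}\tensor{A}(i_1,j,k_1)\tensor{B}(k_1,r,l_1)$; hence the mode-$2$ unfolding obeys $(\tensor{A}\boxtimes_2\tensor{B})_{[2]}(\overline{jr},\overline{l_1 i_1}) = \sum_{k_1}\tensor{A}(i_1,j,k_1)\tensor{B}(k_1,r,l_1)$, and analogously for $\tensor{C}\boxtimes_2\tensor{D}$. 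Forming $(\tensor{A}\boxtimes_2\tensor{B})_{[2]}^\intercal(\tensor{C}\boxtimes_2\tensor{D})_{[2]}$ sums over the shared row index $\overline{jr}$, i.e. over $j\in[J]$ and $r\in[R]$, so its $(\overline{l_1 i_1},\overline{l_2 i_2})$ entry is
\[ \sum_{j,r,k_1,k_2} \tensor{A}(i_1,j,k_1)\,\tensor{B}(k_1,r,l_1)\,\tensor{C}(i_2,j,k_2)\,\tensor{D}(k_2,r,l_2). \]

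\emph{Right-hand side.} Using \Cref{def:outer_product}, the $(l_1,k_1,l_2,k_2)$ entry of $\sum_{r}\tensor{B}(r)^\intercal \circ \tensor{D}(r)^\intercal$ is $\sum_{r}\tensor{B}(k_1,r,l_1)\tensor{D}(k_2,r,l_2)$, and the $(k_1,i_1,k_2,i_2)$ entry of $\sum_{j}\tensor{A}(j)^\intercal \circ \tensor{C}(j)^\intercal$ is $\sum_{j}\tensor{A}(i_1,j,k_1)\tensor{C}(i_2,j,k_2)$. Contracting the $K_1$- and $K_2$-modes via $\times_{2,4}^{1,3}$ (in the sense of \Cref{def:ct_product}) sums over $k_1,k_2$ and yields a fourth-order tensor whose generic entry is exactly the quadruple sum displayed above; applying the $(\cdot)_{<2>}$ unfolding of \Cref{def:n_unfolding} then groups the modes as rows and columns, and I would check this reproduces the left-hand side.

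\emph{Main obstacle.} The algebra itself is trivial, since both sides collapse to the same commutative sum of products of four scalar entries; the substance is the index bookkeeping through the two \emph{different} matricizations ($[2]$ on the left, $<2>$ on the right) together with the ordering of the surviving modes of the multi-index contraction. The delicate point is that the output modes of $\times_{2,4}^{1,3}$ must be ordered as $(L_1,I_1,L_2,I_2)$, pairing each contracted block's survivors, rather than the naive ``all $\tensor{P}$-modes, then all $\tensor{Q}$-modes'' order $(L_1,L_2,I_1,I_2)$. Only with the interleaved ordering does the subsequent $2$-unfolding produce a matrix of shape $(L_1 I_1)\times(L_2 I_2)$ with rows indexed by $\overline{l_1 i_1}$ and columns by $\overline{l_2 i_2}$, matching the left-hand side both in shape and in entry map; this is the one place where an error could creep in, so I would verify the index correspondence explicitly.
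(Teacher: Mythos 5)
Your entrywise verification is correct and is exactly the argument the paper gestures at but omits (``examine both sides of the equation directly by some algebraic operations''): both sides reduce to $\sum_{j,r,k_1,k_2}\tensor{A}(i_1,j,k_1)\tensor{B}(k_1,r,l_1)\tensor{C}(i_2,j,k_2)\tensor{D}(k_2,r,l_2)$ at position $(\overline{l_1 i_1},\overline{l_2 i_2})$. You also correctly isolate the only delicate point, namely that the surviving modes of $\times_{2,4}^{1,3}$ come out interleaved as $(L_1,I_1,L_2,I_2)$ so that the subsequent $\langle 2\rangle$-unfolding matches the column ordering $\overline{l_1 i_1}$ of the mode-$2$ unfolding on the left.
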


\begin{proof}
	The proof is just to examine both sides of the equation directly by some algebraic operations. %Although tedious,
	The idea is simple, but the process is tedious. So we omit the proof here.
\end{proof}

\begin{remark}
    The contracted tensor product used in \Cref{prop:subchain_gram} is a little different from the one in \Cref{def:ct_product}. It can be regarded as a \emph{general contracted tensor product}. For $\tensor{A} \in \bb{R}^{I_1 \times J \times R_1 \times K}$ and $\tensor{B} \in \bb{R}^{J \times I_2 \times K \times R_2}$, their general product is a 4th-order tensor of size $I_1 \times I_2 \times R_1 \times R_2$ defined as
	\begin{equation*}
		(\tensor{A} \times_{2,4}^{1,3} \tensor{B})(i_1, i_2, r_1, r_2) = \sum_{j,k} \tensor{A}(i_1, j, r_1, k) \tensor{B}(j, i_2, k,r_2).
	\end{equation*}
	The graphical illustration of the product is shown in \Cref{fig:gengral_contract}.

\begin{figure}
\centering
	\scalebox{1}{
	% ---- Inserted File ----
	\begin{tikzpicture}
		\node[circle, minimum width =20pt, minimum height =20pt, shading=ball, ball color=red!30, "{$\tensor{A}$}" left] (a) at(2,1){};
		\draw [densely dashed, fill=purple!10] (0.2,0.5) -| (1.9,-0.5) -| cycle;
		\draw [densely dashed, fill=teal!10] (2.1,0.5) -| (3.8,-0.5) -| cycle;
		\node (i1) at (0.5,0) {$I_1$};
		\node (j1) at (1.5,0) {$J$};
		\node (r1) at (2.5,0) {$R_1$};
		\node (k1) at (3.5,0) {$K$};
		\draw[thick] (a) -- (i1);
		\draw[thick] (a) -- (j1);
		\draw[thick] (a) -- (r1);
		\draw[thick] (a) -- (k1);
		
		\node[circle, minimum width =20pt, minimum height =20pt, shading=ball, ball color=red!30, "{$\tensor{B}$}" left] (b) at(6,1){};
		\draw [densely dashed, fill=purple!10] (4.2,0.5) -| (5.9,-0.5) -| cycle;
		\draw [densely dashed, fill=teal!10] (6.1,0.5) -| (7.8,-0.5) -| cycle;
		\node (j2) at (4.5,0) {$J$};
		\node (i2) at (5.5,0) {$I_2$};
		\node (k2) at (6.5,0) {$K$};
		\node (r2) at (7.5,0) {$R_2$};
		\draw[thick] (b) -- (j2);
		\draw[thick] (b) -- (i2);
		\draw[thick] (b) -- (k2);
		\draw[thick] (b) -- (r2);
		
		\node[circle, minimum width =20pt, minimum height =20pt, shading=ball, ball color=blue!30, "{$\tensor{A} \times_{2,4}^{1,3} \tensor{B}$}" left] (b) at(4,-3){};
		\draw [densely dashed, fill=purple!10] (2.2,-1.5) -| (3.9,-2.5) -| cycle;
		\draw [densely dashed, fill=teal!10] (4.1,-1.5) -| (5.8,-2.5) -| cycle;
		\node (ii1) at (2.5,-2) {$I_1$};
		\node (ii2) at (3.5,-2) {$I_2$};
		\node (rr1) at (4.5,-2) {$R_1$};
		\node (rr2) at (5.5,-2) {$R_2$};
		\draw[thick] (b) -- (ii1);
		\draw[thick] (b) -- (ii2);
		\draw[thick] (b) -- (rr1);
		\draw[thick] (b) -- (rr2);
		
		\draw[-Stealth] (1,-0.5) -- (3,-1.5);
		\draw[-Stealth] (5,-0.5) -- (3,-1.5);
		\draw[-Stealth] (3,-0.5) -- (5,-1.5);
		\draw[-Stealth] (7,-0.5) -- (5,-1.5);
	\end{tikzpicture}
	% --- End Inserted File ---
	}
\caption{Illustration of the general contracted tensor product $\tensor{A} \times_{2,4}^{1,3} \tensor{B}$.}
\label{fig:gengral_contract}
\end{figure}
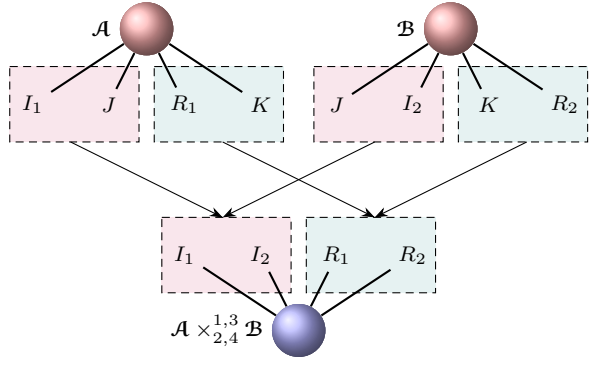
\end{remark}

 According to \Cref{prop:subchain_gram}, the coefficient matrix $(\mat{G}_{[2]}^{\ne n})^\intercal \mat{G}_{[2]}^{\ne n}$ can be computed efficiently as $(\mat{G}_{[2]}^{\ne n})^\intercal \mat{G}_{[2]}^{\ne n} = \mat{S}_{n<2>}$, 
 where 
 $$\tensor{S}_{n} =  \tensor{P}_{n-1} \times_{2,4}^{1,3} \cdots \times_{2,4}^{1,3} \tensor{P}_{1} \times_{2,4}^{1,3} \tensor{P}_{N} \times_{2,4}^{1,3} \cdots \times_{2,4}^{1,3} \tensor{P}_{n+1},$$ 
 and 
 $$\tensor{P}_{j} = \sum_{i_j = 1}^{I_j} \mat{G}_{j}(i_j)^\intercal \circ \mat{G}_{j}(i_j)^\intercal, ~~j \ne n.$$
 Putting the above together, we can devise an algorithm called TR-ALS-NE, whose details are listed in \Cref{alg:tr_als_ne}.
 
\begin{algorithm}
\caption{TR-ALS-NE (Proposal)}
\label{alg:tr_als_ne}
	\textbf{Input:} $\tensor{X} \in \bb{R}^{I_1 \times \cdots \times I_N}$, TR-ranks $R_1, \cdots, R_N$
	
	\textbf{Output:} TR-cores $\{ \tensor{G}_n \in \bb{R}^{R_n \times I_n \times R_{n+1}} \}_{n=1}^N$
	\begin{algorithmic}[1]
		\State Initialize TR-cores $\tensor{G}_{1}, \cdots, \tensor{G}_{N}$ \label{line:ne_init}
		\State Compute the Gram tensors $\tensor{P}_{1} = \sum_{i_1 = 1}^{I_1} \mat{G}_{1}(i_1)^\intercal \circ \mat{G}_{1}(i_1)^\intercal, \cdots, \tensor{P}_{N} = \sum_{i_N = 1}^{I_N} \mat{G}_{N}(i_N)^\intercal \circ \mat{G}_{N}(i_N)^\intercal$ \label{line:ne_gramn}
		\Repeat
		\For{$n = 1,\cdots,N$}
		\State $\tensor{S}_{n} \leftarrow \tensor{P}_{n-1} \times_{2,4}^{1,3} \cdots \times_{2,4}^{1,3} \tensor{P}_{1} \times_{2,4}^{1,3} \tensor{P}_{N} \times_{2,4}^{1,3} \cdots \times_{2,4}^{1,3} \tensor{P}_{n+1}$ \label{line:ne_gram}
		\State $\tensor{G}^{\ne n} \leftarrow \tensor{G}_{n+1} \boxtimes_2 \cdots \boxtimes_2 \tensor{G}_{N} \boxtimes_2 \tensor{G}_{1} \boxtimes_2 \cdots \boxtimes_2 \tensor{G}_{n-1}$ \label{line:ne_subchain}
		\State $\mat{M}_{n} \leftarrow \mat{X}_{[n]} \mat{G}_{[2]}^{\ne n}$ \label{line:ne_mttsp}
		\Comment matricized-tensor times subchain product (MTTSP)
		\State Solve $\mat{G}_{n(2)} \mat{S}_{n<2>} = \mat{M}_{n}$ \label{line:ne_normal}
		\Comment normal equation
		\State Recompute the Gram tensor $\tensor{P}_{n} = \sum_{i_n = 1}^{I_n} \mat{G}_{n}(i_n)^\intercal \circ \mat{G}_{n}(i_n)^\intercal$ for the updated TR-core $\tensor{G}_{n}$ \label{line:ne_regramn}
		\EndFor
		\Until termination criteria met
	\end{algorithmic}
\end{algorithm}

\begin{remark}
    %From the above derivation, 
    It can be found that the methods TR-ALS and TR-ALS-NE are equivalent in mathematics. Hence, they have the same conclusions on convergence, which is also confirmed by the numerical results in {Experiment A-\uppercase\expandafter{\romannumeral2}} in \Cref{ssec:syn-A}. That is, they % performance is requiring 
    require almost the same number of iterations to achieve the same error. Since a new way is adopted to compute $(\mat{G}_{[2]}^{\ne n})^\intercal \mat{G}_{[2]}^{\ne n}$, the cost of our method is cheaper than that of TR-ALS, which is supported by the discussions on the complexities of various methods in \Cref{ssec:tr_als_qene} and the numerical results in Experiments A-\uppercase\expandafter{\romannumeral1} and A-\uppercase\expandafter{\romannumeral2} in \Cref{ssec:syn-A}. For the convergence analyses of TR-ALS, refer to, e.g., \cite{espig2015ConvergenceAlternatinga,chen2020TensorRing} and references therein.
\end{remark}

In \Cref{alg:tr_als_ne}, we use a term called Gram tensor, whose formal definition is as follows. 
Note that we do not use it for the derivation of \Cref{alg:tr_als_ne}.
\begin{definition}[Gram Tensor]
	For $\tensor{A}_k \in \bb{R}^{I_1 \times \cdots \times I_N}$ with $k \in [K]$, %be $K$ $N$th-order tensors.
	their \textbf{Gram tensor} %of $K$ tensors $\tensor{A}_1, \cdots, \tensor{A}_K$ 
	is an $(N+2)$nd-order tensor defined as
	\begin{equation*}
		\Delta(\tensor{A}_1, \tensor{A}_2, \cdots, \tensor{A}_K)=
		\begin{bmatrix}
			\langle \tensor{A}_1,\tensor{A}_1 \rangle & \langle \tensor{A}_1,\tensor{A}_2 \rangle & \cdots & \langle \tensor{A}_1,\tensor{A}_K \rangle \\
			\langle \tensor{A}_2,\tensor{A}_1 \rangle & \langle \tensor{A}_2,\tensor{A}_2 \rangle & \cdots & \langle \tensor{A}_2,\tensor{A}_K \rangle \\
			\vdots & \vdots & \cdots & \vdots \\
			\langle \tensor{A}_K,\tensor{A}_1 \rangle & \langle \tensor{A}_K,\tensor{A}_2 \rangle & \cdots & \langle \tensor{A}_K,\tensor{A}_K \rangle
		\end{bmatrix}.
	\end{equation*}
\end{definition}

Now, we analyze the computational complexity of TR-ALS-NE shown in \Cref{alg:tr_als_ne}. Recall that $\tensor{X}$ has dimensions $I_1 \times \cdots \times I_N$ and its TR-ranks are $R_1, \cdots, R_N$. To simplify notation, we make the assumption throughout the analysis that $I_n = I$ and $R_n = R$ for all $n \in [N]$, and that $N < I$ and $R^2< I$.
And, we ignore any cost associated with, e.g., checking termination conditions.

Upfront costs of TR-ALS-NE:

\textbf{\Lref{line:ne_init}: Initializing cores.} This depends on how to initiate the cores. We assume that they are randomly drawn, e.g., from a Gaussian distribution, resulting in a cost $\bigO{NIR^2}$.

\textbf{\Lref{line:ne_gramn}: Computing the Gram tensor.} It costs $\bigO{NIR^4}$ according to the definition of the outer product.

The costs of per outer loop iteration in TR-ALS-NE:

\textbf{\Lref{line:ne_gram}: Computing the general contracted tensor product.} It costs $\bigO{(N-2)R^6}$. Doing this for each of the $N$ cores in the inner loop brings the cost $\bigO{N(N-2)R^6}$.

\textbf{\Lref{line:ne_subchain}: Computing the unfolding subchain tensor.} If the $N-1$ cores are dense and contracted in sequence, the cost is $$R^3(I^2+I^3+ \cdots +I^{N-1}) \leq R^3(NI^{N-2}+I^{N-1}) \leq 2R^3I^{N-1} = \bigO{I^{N-1}R^3}.$$ 
Doing this for each of the $N$ cores in the inner loop brings the cost $\bigO{NI^{N-1}R^3}$.

\textbf{\Lref{line:ne_mttsp}: Computing MTTSP.} It costs $\bigO{I^{N}R^2}$ per inner loop iteration, i.e., $\bigO{NI^{N}R^2}$ per outer loop iteration.

\textbf{\Lref{line:ne_normal}: Solving the normal equation.} We consider the cost of the method for normal equation described in \cite[Section 5.3.2]{golub2013MatrixComputations}. It costs $\bigO{IR^6}$. Doing this for each of the $N$ cores in the inner loop brings the cost $\bigO{NIR^6}$.

\textbf{\Lref{line:ne_regramn}: Updating the Gram tensor.} It costs $\bigO{IR^4}$ per inner loop iteration, i.e., $\bigO{NIR^4}$ per outer loop iteration.
 
Putting them all together, we have that the leading order complexity of TR-ALS-NE is 
$$\bigO{NIR^4 + it \cdot N I^N R^2},$$ 
where ``$it$'' denotes the number of outer loop iterations.
It is easy to see that, in TR-ALS-NE, the cost is dominated by MTTSP.

%%==================
\subsection{TR-ALS based on QR factorization}
%In the previous subsection, TR-ALS was solved using the normal equations, which are well known to be sensitive to roundoff error for moderately ill-conditioned situations. 
%In this subsection, we suggest a more stable strategy in which the TR-ALS is solved using QR factorization instead. 

We begin with a definition of QR factorization for the 3rd-order tensor.
\begin{definition}[Mode-$n$ QR Factorization]
	For $\tensor{A} \in \bb{R}^{I_1 \times I_2 \times I_3}$, % be a 3rd-order tensor. The 
	its \textbf{mode-$n$ QR factorization} %of $\tensor{A}$ 
	is defined as follows:
	
	(1) If $I_n \geq \prod_{j \ne n}I_j, n = 1,2,3$,
	\begin{equation*}
	\tensor{A} = \tensor{R} \times_n \mat{Q}, ~~n = 1,2,3,
	\end{equation*}
	where $\mat{Q} \in \bb{R}^{I_n \times \prod_{j \ne n}I_j}$ is an orthogonal matrix, and $\tensor{R}$ is a 3rd-order tensor whose mode-$n$ unfolding matrix is a upper triangular matrix of size $\prod_{j \ne n}I_j \times \prod_{j \ne n}I_j$.
	
	(2) If $I_n \leq \prod_{j \ne n}I_j, n = 1,2,3$,
	\begin{equation*}
	\tensor{A} = \tensor{R} \times_n \mat{Q}, ~~n = 1,2,3,
	\end{equation*}
	where $\mat{Q} \in \bb{R}^{I_n \times I_n}$ is an orthogonal matrix, and $\tensor{R}$ is a 3rd-order tensor whose mode-$n$ unfolding matrix is a upper triangular matrix of size $I_n \times \prod_{j \ne n}I_j$.
\end{definition}

Thus, noting \eqref{eq:subchain_new}, to form the QR factorization of $\mat{G}^{\ne n}_{[2]}$, a first step is to compute the mode-2 QR factorization of each individual TR-core, i.e., $\tensor{G}_j = \tensor{R}_j \times_2 \mat{Q}_j$ for $j \in [N]$. Then
\begin{align*}
\tensor{G}^{\ne n}
&= (\tensor{R}_{n+1} \times_2 \mat{Q}_{n+1}) \boxtimes_2 \cdots \boxtimes_2 (\tensor{R}_{N} \times_2 \mat{Q}_{N}) \boxtimes_2 (\tensor{R}_{1} \times_2 \mat{Q}_{1}) \boxtimes_2 \\
&\qquad \cdots \boxtimes_2 (\tensor{R}_{n-1} \times_2 \mat{Q}_{n-1}) \\
&= (\tensor{R}_{n+1} \boxtimes_2 \cdots \boxtimes_2 \tensor{R}_{N} \boxtimes_2 \tensor{R}_{1} \boxtimes_2 \cdots \boxtimes_2 \tensor{R}_{n-1}) \\
&\qquad \times_2 (\mat{Q}_{n-1} \otimes \cdots \otimes \mat{Q}_{1} \otimes \mat{Q}_{N} \otimes \cdots \otimes \mat{Q}_{n+1}),
\end{align*}
where the last equality comes from \Cref{prop:slice_kron}. Further, we compute the mode-2 QR factorization of the subchain product $\tensor{V}_{n} = \tensor{R}_{n+1} \boxtimes_2 \cdots \boxtimes_2 \tensor{R}_{N} \boxtimes_2 \tensor{R}_{1} \boxtimes_2 \cdots \boxtimes_2 \tensor{R}_{n-1} = \tensor{R}_{0} \times_2 \mat{Q}_{0}$. This allows us to express the mode-2 QR factorization of $\tensor{G}^{\ne n}$ as
\begin{align*}
	\tensor{G}^{\ne n} 
	&= (\tensor{R}_{0} \times_2 \mat{Q}_{0}) \times_2 (\mat{Q}_{n-1} \otimes \cdots \otimes \mat{Q}_{1} \otimes \mat{Q}_{N} \otimes \cdots \otimes \mat{Q}_{n+1}) \\
	&= \tensor{R}_{0} \times_2 \left( (\mat{Q}_{n-1} \otimes \cdots \otimes \mat{Q}_{1} \otimes \mat{Q}_{N} \otimes \cdots \otimes \mat{Q}_{n+1}) \mat{Q}_{0} \right) = \tensor{R} \times_2 \mat{Q},
\end{align*}
where the second equality comes from the property of TTM detailed in \cite{kolda2009TensorDecompositions}. Thus, from the above representation, we can get the QR factorization of $\mat{G}^{\ne n}_{[2]}$ as $\mat{G}^{\ne n}_{[2]} = \mat{Q} \mat{R}_{[2]}$ using \Cref{prop:ttm} eventually.

Once the above QR factorization is derived, the problem \eqref{eq:tr_als} can be written as 
\begin{equation*}
	\mathop{\arg\min}_{\mat{G}_{n(2)}} \left\| \mat{X}_{[n]} - \mat{G}_{n(2)} \mat{R}_{[2]}^\intercal \mat{Q}_{0}^\intercal (\mat{Q}_{n-1} \otimes \cdots \otimes \mat{Q}_{1} \otimes \mat{Q}_{N} \otimes \cdots \otimes \mat{Q}_{n+1})^\intercal \right\|_F,
\end{equation*}
where $\mat{R}_{[2]} \in \bb{R}^{R_{n+1}R_{n} \times R_{n+1}R_{n}}$, $\mat{Q}_{0} \in \bb{R}^{\frac{\prod_{n=1}^{N}R_n}{R_{n+1}R_{n}} \times R_{n+1}R_{n}}$, and $\mat{Q}_j \in \bb{R}^{I_j \times R_jR_{j+1} }$ for $j \in [N] \backslash n$.
Further, let $\tensor{Y}$ have the format of Multi-TTM, i.e., $\tensor{Y} = \tensor{X} \times_1 \mat{Q}_{1}^\intercal \cdots \times_{n-1} \mat{Q}_{n-1}^\intercal \times_{n+1} \mat{Q}_{n+1}^\intercal \cdots \times_{N} \mat{Q}_{N}^\intercal$. Then, using \Cref{prop:ttm}, we have
\begin{equation}
\label{eq:qrhalf_ls}
	 \mathop{\arg\min}_{\mat{G}_{n(2)}} \left\| \mat{Y}_{[n]} - \mat{G}_{n(2)} \mat{V}_{n[2]}^\intercal \right\|_F,
\end{equation}
where $\mat{V}_{n[2]} = \mat{Q}_{0} \mat{R}_{[2]}$ due to $\tensor{V}_{n} = \tensor{R}_{0} \times_2 \mat{Q}_{0}$. 
Finally, by forming $\mat{W}_{n} = \mat{Y}_{[n]} \mat{Q}_{0}$, we obtain a smaller LS problem
\begin{equation}
\label{eq:tri_ls}
	\mathop{\arg\min}_{\mat{G}_{n(2)}} \left\| \mat{W}_{n} - \mat{G}_{n(2)} \mat{R}_{[2]}^\intercal \right\|_F,
\end{equation}
from which we can compute $\mat{G}_{n(2)}$ and hence the TR-core $\tensor{G}_n$. We call this method TR-ALS-QR and detail it in \Cref{alg:tr_als_qr}.

\begin{algorithm}
\caption{TR-ALS-QR (Proposal)}
\label{alg:tr_als_qr}
	\textbf{Input:} $\tensor{X} \in \bb{R}^{I_1 \times \cdots \times I_N}$, TR-ranks $R_1, \cdots, R_N$
	
	\textbf{Output:} TR-cores $\{ \tensor{G}_n \in \bb{R}^{R_n \times I_n \times R_{n+1}} \}_{n=1}^N$
	\begin{algorithmic}[1]
		\State Initialize TR-cores $\tensor{G}_1, \cdots, \tensor{G}_N$ \label{line:qr_init}
		\State Compute the mode-2 QR factorizations $\tensor{R}_{1} \times_2 \mat{Q}_{1}, \cdots, \tensor{R}_{N} \times_2 \mat{Q}_{N}$ of TR-cores \label{line:qr_comput_qrdN}
		\Repeat
		\For{$n = 1,\cdots,N$}
		\State $\tensor{V}_{n} \leftarrow \tensor{R}_{n+1} \boxtimes_2 \cdots \boxtimes_2 \tensor{R}_{N} \boxtimes_2 \tensor{R}_{1} \boxtimes_2 \cdots \boxtimes_2 \tensor{R}_{n-1}$ \label{line:qr_subchain_prod}
		\State Compute mode-2 QR factorization $\tensor{V}_n = \tensor{R} \times_2 \mat{Q}_{0}$ \label{line:qr_comput_qrd0}
		\State $\tensor{Y} \leftarrow \tensor{X} \times_1 \mat{Q}_{1}^\intercal \cdots \times_{n-1} \mat{Q}_{n-1}^\intercal \times_{n+1} \mat{Q}_{n+1}^\intercal \cdots \times_{N} \mat{Q}_{N}^\intercal$ \label{line:qr_ttm}
		\State $\mat{W}_{n} \leftarrow \mat{Y}_{[n]} \mat{Q}_{0}$ \label{line:qr_wn}
		\State Solve $\mat{G}_{n(2)} \mat{R}_{[2]}^\intercal = \mat{W}_{n}$ by substitution \label{line:qr_solve}
		\State Recompute the mode-2 QR factorization $\tensor{R}_{n} \times_2 \mat{Q}_{n}$ for the updated TR-core $\tensor{G}_{n}$ \label{line:qr_recomput_qrd}
		\EndFor
		\Until termination criteria met
	\end{algorithmic}
\end{algorithm}

\begin{remark}
	If $\mat{G}^{\ne n}_{[2]}$ is rank deficient, another more stable way of solving \eqref{eq:tri_ls} is to use the SVD of $\mat{R}_{[2]}$ further \cite[Section 5.5]{golub2013MatrixComputations}.
\end{remark}

\begin{remark}
\label{rem:tr-als-qr}
    The method TR-ALS-QR is also equivalent to TR-ALS % and hence TR-ALS-NE 
    in mathematics. As pointed out in \cite[Section 5.3]{golub2013MatrixComputations}, QR factorization can stabilize the LS problem and the methods based on normal equation are more sensitive. Hence, for ill-conditioned problems, TR-ALS-QR may need fewer iterations compared with TR-ALS-NE. This also suggests that TR-ALS-QR usually performs better than TR-ALS-NE in computing time though the former is a little more expensive than the latter. These results are supported by the comparisons on the complexities between these two methods in \Cref{ssec:tr_als_qene} and Experiments B-\uppercase\expandafter{\romannumeral2} and B-\uppercase\expandafter{\romannumeral3} in \Cref{ssec:syn-B}. 
    %According to the analysis about the sensitivity of the LS problem in \cite[Section 5.3.6]{golub2013MatrixComputations}, we know that the method normal equations produces an solution whose error depends on the square of the condition number of the coefficient matrix. This conclusion leads to a bad solution for each subproblem in TR-ALS when the condition number is large, i.e. the problem is very ill-conditioned, and after iterations a bad decomposition is obtained. However, although the convergence rate is same, TR-ALS-QR will give a more stable result, achieving a smaller relative error.
\end{remark}

Now, we consider the complexity analysis of \Cref{alg:tr_als_qr} with the same assumptions as done for TR-ALS-NE. Here, we mainly treat $\tensor{V}_n$ as a dense tensor without discussing the possibility of exploiting sparsity. 

Upfront costs of TR-ALS-QR:

\textbf{\Lref{line:qr_init}: Initializing cores.} This is the same as the one for TR-ALS-NE. That is, we assume that the cores are randomly drawn, e.g., from a Gaussian distribution, resulting in a cost $\bigO{NIR^2}$.

\textbf{\Lref{line:qr_comput_qrdN}: Compute mode-2 QR factorization of TR-cores.} %The computational complexity of this step 
This is equivalent to computing the QR factorization of $\mat{G}_{n(2)}$ for $ n = 1,\cdots,N$ resulting in a cost $\bigO{NIR^4}$

The costs of per outer loop iteration in TR-ALS-QR:

\textbf{\Lref{line:qr_subchain_prod}: Compute $\tensor{V}_{n}$.} If $\tensor{R}_{j}$ for $ j \ne n$ are dense and the product is implemented in sequence, the cost is 
$$\bigO{R^3(R^4+R^6+ \cdots +R^{2N-2})} \leq \bigO{R^{2N+1}}.$$
Doing this for each of the $N$ cores in the inner loop brings the cost $\bigO{NR^{2N+1}}$.

\textbf{\Lref{line:qr_comput_qrd0}: Compute mode-2 QR factorization of $\tensor{V}_{n}$.} It costs $\bigO{R^{2N+2}}$ because $\mat{V}_{n}$ has dimensions $R \times R^{2N-2} \times R$, i.e., $\bigO{NR^{2N+2}}$ per outer loop iteration.

\textbf{\Lref{line:qr_ttm}: Multi-TTM.} We compute the resulting tensor $\tensor{Y}$, which has dimensions $R^2 \times \cdots \times I \times \cdots \times R^2$, by performing the single TTMs in sequence. Thus, the overall cost of the Multi-TTM is
\begin{equation*}
\bigO{R^2I^N(1+\frac{R^2}{I}+ \frac{R^4}{I^2} + \cdots + \frac{R^{2N-4}}{I^{N-2}})}
\end{equation*}
per inner loop iteration. Hence, under the assumption $I>R^2$, the Multi-TTM costs $\bigO{NI^NR^2}$ per outer loop iteration.

\textbf{\Lref{line:qr_wn}: Compute $\mat{W}_{n}$.} It costs $\bigO{IR^{2N}}$ per inner loop iteration, i.e., $\bigO{NIR^{2N}}$ per outer loop iteration.

\textbf{\Lref{line:qr_solve}: Solve triangular system.} It costs $\bigO{IR^4}$ per inner loop iteration, i.e., $\bigO{NIR^4}$ per outer loop iteration.

\textbf{\Lref{line:qr_recomput_qrd}: Recompute mode-2 QR factorization of the $n$-th TR-core.} It costs $\bigO{IR^4}$ per inner loop iteration, i.e., $\bigO{NIR^4}$ per outer loop iteration.

Putting them all together, we have that the leading order complexity of TR-ALS-QR is 
$$\bigO{NIR^4 + it \cdot N I^N R^2},$$ 
where ``$it$'' denotes the number of outer loop iterations.
So, in TR-ALS-QR, the cost is dominated by the Multi-TTM in \Lref{line:qr_ttm}.

%%==================
\subsection{TR-ALS based on QR factorization and normal equation}
\label{ssec:tr_als_qene}
Recalling the derivation of TR-ALS-QR, we find that $\tensor{V}_n$ also has the subchain product structure. Moreover, the complexity analysis for TR-ALS-QR shows that computing the mode-2 QR factorization of $\tensor{V}_n$ is expensive. Thus, we propose to solve \eqref{eq:qrhalf_ls} using TR-ALS-NE. We call this method TR-ALS-QRNE and detail it in \Cref{alg:tr_als_qrne}.

\begin{algorithm}
\caption{TR-ALS-QRNE (Proposal)}
\label{alg:tr_als_qrne}
	\textbf{Input:} $\tensor{X} \in \bb{R}^{I_1 \times \cdots \times I_N}$, TR-ranks $R_1, \cdots, R_N$
	
	\textbf{Output:} TR-cores $\{ \tensor{G}_n \in \bb{R}^{R_n \times I_n \times R_{n+1}} \}_{n=1}^N$
	\begin{algorithmic}[1]
		\State Initialize TR-cores $\tensor{G}_1, \cdots, \tensor{G}_N$
		\State Compute the mode-2 QR factorizations $\tensor{R}_{1} \times_2 \mat{Q}_{1}, \cdots, \tensor{R}_{N} \times_2 \mat{Q}_{N}$ of TR-cores 
		\State Compute the Gram tensors $\tensor{P}_{1} = \sum_{i_1 = 1}^{I_1} \mat{R}_{1}(i_1)^\intercal \circ \mat{R}_{1}(i_1)^\intercal, \cdots, \tensor{P}_{N} = \sum_{i_N = 1}^{I_N} \mat{R}_{N}(i_N)^\intercal \circ \mat{R}_{N}(i_N)^\intercal$ 
		\Repeat
		\For{$n = 1,\cdots,N$}
		\State $\tensor{S}_{n} \leftarrow \tensor{P}_{n-1} \times_{2,4}^{1,3} \cdots \times_{2,4}^{1,3} \tensor{P}_{1} \times_{2,4}^{1,3} \tensor{P}_{N} \times_{2,4}^{1,3} \cdots \times_{2,4}^{1,3} \tensor{P}_{n+1}$
		\State $\tensor{V}_{n} \leftarrow \tensor{R}_{n+1} \boxtimes_2 \cdots \boxtimes_2 \tensor{R}_{N} \boxtimes_2 \tensor{R}_{1} \boxtimes_2 \cdots \boxtimes_2 \tensor{R}_{n-1}$
		\State $\tensor{Y} \leftarrow \tensor{X} \times_1 \mat{Q}_{1}^\intercal \cdots \times_{n-1} \mat{Q}_{n-1}^\intercal \times_{n+1} \mat{Q}_{n+1}^\intercal \cdots \times_{N} \mat{Q}_{N}^\intercal$
		\State $\mat{M}_{n} \leftarrow \mat{Y}_{[n]} \mat{V}_{n[2]}$ 
		\State Solve $\mat{G}_{n(2)} \mat{S}_{n<2>} = \mat{M}_{n}$ 
		\State Recompute the mode-2 QR factorization $ \tensor{R}_{n} \times_2 \mat{Q}_{n}$ for the updated TR-core $\tensor{G}_{n}$
		\State Recompute the Gram tensor $\tensor{P}_{n} = \sum_{i_n = 1}^{I_n} \mat{R}_{n}(i_n)^\intercal \circ \mat{R}_{n}(i_n)^\intercal$ for the updated $\tensor{R}_{n}$
		\EndFor
		\Until termination criteria met
	\end{algorithmic}
\end{algorithm}

The complexity analysis of \Cref{alg:tr_als_qrne} is similar to those of TR-ALS-NE and TR-ALS-QR when treating $\tensor{V}_n$ as a dense tensor, so we won't go into details here. The following \Cref{tab:complexity} summarizes the computational complexities of the algorithms involved in this paper in detail.  

\begin{table}[htbp]
	\centering
	\caption{Comparison of each part of computational complexities for different algorithms (only in the dense case).} 
	\label{tab:complexity}
	\resizebox{1\linewidth}{!}{
	\begin{tabular}{lllllll}  
		\toprule
		& \multicolumn{2}{c}{Upfront costs} & \multicolumn{4}{c}{Per outer loop iteration} \\
		\cmidrule(lr){2-3}
		\cmidrule(lr){4-7}
		Method 		 & {Initializing} & {Gram/QR}   & {MTTSP/TTM} & {Solving} & {Gram/QR} & {Others} \\
		\midrule
		TR-ALS 	     & $\bigO{NIR^2}$ & ---   & --- & \tabincell{l}{$\bigO{NI^{N-1}R^4}$\\$+\bigO{NI^NR^2}$\\$+\bigO{NIR^4}$} & --- & $\bigO{NI^{N-1}R^3}$ \\
		\midrule
		TR-ALS-NE    & $\bigO{NIR^2}$ & $\bigO{NIR^4}$   & $\bigO{NI^{N}R^2}$ & $\bigO{NIR^6}$ & $\bigO{NIR^4}$ & \tabincell{l}{$\bigO{N(N-2)R^6}$\\$+\bigO{NI^{N-1}R^3}$} \\
		TR-ALS-QR    & $\bigO{NIR^2}$ & $\bigO{NIR^4}$   & $\bigO{NR^2I^N(1+\frac{R^2}{I}+ \frac{R^4}{I^2} + \cdots + \frac{R^{2N-4}}{I^{N-2}})}$ & $\bigO{NIR^4}$ & $\bigO{NIR^4}$ & \tabincell{l}{$\bigO{NR^{2N+1}}$\\$+\bigO{NR^{2N+2}}$\\$+\bigO{NIR^{2N}}$} \\
		TR-ALS-QRNE  & $\bigO{NIR^2}$ & \tabincell{l}{$\bigO{NIR^4}$\\$+\bigO{NR^6}$}   & \tabincell{l}{$\bigO{NR^2I^N(1+\frac{R^2}{I}+ \frac{R^4}{I^2} + \cdots + \frac{R^{2N-4}}{I^{N-2}})}$\\$+\bigO{IR^{2N}}$} & $\bigO{NIR^6}$ & \tabincell{l}{$\bigO{NIR^4}$\\$+\bigO{NR^6}$} & \tabincell{l}{$\bigO{N(N-2)R^6}$\\$+\bigO{NR^{2N+1}}$} \\
		\bottomrule
	\end{tabular}
	}
\end{table}	
From \Cref{tab:complexity}, we can find the following results.
\begin{itemize}
	\item \textbf{TR-ALS and TR-ALS-NE.}  
	The domain cost of TR-ALS is $\bigO{NI^{N-1}R^4+NI^NR^2}$, which appears in solving the ALS subproblems. Accordingly, the domain cost of TR-ALS-NE appears in computing MTTSP, which costs $\bigO{NI^NR^2}$. They have the same leading order, but TR-ALS-NE is much closer to twice as fast as TR-ALS. This is mainly because we use a new method to compute the coefficient matrices of the normal equations for the ALS subproblems.
	
	\item \textbf{TR-ALS-NE and TR-ALS-QR.} 
	The domain cost of TR-ALS-QR, appearing in computing Multi-TTM, is $$\bigO{NR^2I^N(1+\frac{R^2}{I}+ \frac{R^4}{I^2} + \cdots + \frac{R^{2N-4}}{I^{N-2}})},$$ which is larger than $\bigO{NI^NR^2}$ for TR-ALS-NE, especially when $R^2$ is significantly large. In addition, for the aforementioned case, the complexity in the `Others' part for TR-ALS-QR is not less than that for TR-ALS-NE either.  
	Therefore, a wrap-up is that TR-ALS-QR is slower than TR-ALS-NE. However, in the case of $I>R^2$, the two algorithms have the identical leading order computational complexity. Considering that TR-ALS-QR is more stable than TR-ALS-NE, the former may perform better in practice, especially for ill-conditioned problems.
	
	\item \textbf{TR-ALS-QR and TR-ALS-QRNE.} Compared to TR-ALS-QR, TR-ALS-QRNE mainly reduces the computational complexity in the `Others' part. So a wrap-up is that TR-ALS-QRNE is faster than TR-ALS-QR. 
	%\item Finally, although there are differences in speed between these algorithms, we only keep the highest order term in the calculation of the leading order complexity, so the leading order complexity of these algorithms is the same.
\end{itemize}

%%==================
\subsection{A by-product}
Based on the previous findings, we can obtain an expression of the inner product of tensors with TR format.
\begin{proposition}
	For $\tensor{A}, \tensor{B} \in \bb{R}^{I_1 \times \cdots \times I_N}$ %be two Nth-order tensors 
	with TR decompositions being $\TR \left( \{\tensor{G}_n\}_{n=1}^N \right)$, where $\tensor{G}_n \in \bb{R}^{R_n \times I_n \times R_{n+1}}$, and $\TR \left( \{\tensor{Z}_n\}_{n=1}^N \right)$, where $\tensor{Z}_n \in \bb{R}^{S_n \times I_n \times S_{n+1}}$, respectively, %Then 
	their \textbf{inner product} %of $\tensor{A}$ and $\tensor{B}$
	can be expressed as
	\begin{equation*}
		\langle \tensor{A}, \tensor{B} \rangle = \Trace \left( (\tensor{P}_{N} \times_{2,4}^{1,3} \cdots \times_{2,4}^{1,3} \tensor{P}_{1})_{<2>} \right),
	\end{equation*}
	where $\tensor{P}_{n} = \sum_{i_n = 1}^{I_n} \mat{G}_{n}(i_n)^\intercal \circ \mat{Z}_{n}(i_n)^\intercal$ for $ n \in [N]$.
\end{proposition}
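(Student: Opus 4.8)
The plan is to evaluate $\langle \tensor{A}, \tensor{B}\rangle$ by expanding both factors through their element-wise TR trace formulas and then collapsing the resulting multiple sum into a single closed tensor contraction. First I would substitute the TR form of each tensor into the definition of the inner product, obtaining
$\langle \tensor{A}, \tensor{B}\rangle = \sum_{i_1,\dots,i_N}\Trace\!\left(\prod_{n=1}^N\mat{G}_n(i_n)\right)\Trace\!\left(\prod_{n=1}^N\mat{Z}_n(i_n)\right)$.

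The key algebraic step is the identity $\Trace(\mat{M})\Trace(\mat{N}) = \Trace(\mat{M}\otimes\mat{N})$ combined with the mixed-product property $(\mat{M}_1\mat{M}_2)\otimes(\mat{N}_1\mat{N}_2) = (\mat{M}_1\otimes\mat{N}_1)(\mat{M}_2\otimes\mat{N}_2)$, applied factor by factor. This converts the product of two traces into the single trace $\Trace\!\left(\prod_{n=1}^N \left(\mat{G}_n(i_n)\otimes\mat{Z}_n(i_n)\right)\right)$. Because each Kronecker factor depends only on its own index $i_n$ and matrix multiplication is bilinear, the outer sum distributes through the product, yielding $\langle \tensor{A}, \tensor{B}\rangle = \Trace\!\left(\prod_{n=1}^N\mat{H}_n\right)$ with the ``transfer matrices'' $\mat{H}_n = \sum_{i_n=1}^{I_n}\mat{G}_n(i_n)\otimes\mat{Z}_n(i_n)\in\bb{R}^{R_nS_n\times R_{n+1}S_{n+1}}$, whose product is square (since $R_{N+1}=R_1$, $S_{N+1}=S_1$), so the trace is well defined.

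It then remains to identify this transfer-matrix trace with the stated right-hand side. Here I would observe that $\mat{H}_n$ stores exactly the same scalars as $\tensor{P}_n = \sum_{i_n}\mat{G}_n(i_n)^\intercal\circ\mat{Z}_n(i_n)^\intercal$, merely arranged as a 4th-order array: concretely $\tensor{P}_n(r_{n+1},r_n,s_{n+1},s_n)$ equals the $\left((r_n,s_n),(r_{n+1},s_{n+1})\right)$ entry of $\mat{H}_n$. Under this correspondence the general contracted product $\times_{2,4}^{1,3}$ contracts the shared $R$-bond (mode $2$ of the left factor with mode $1$ of the right) and the shared $S$-bond (mode $4$ with mode $3$) simultaneously, which is precisely matrix multiplication on the combined $(R,S)$ index. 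Iterating, the chain $\tensor{P}_N\times_{2,4}^{1,3}\cdots\times_{2,4}^{1,3}\tensor{P}_1$ realizes the ordered product $\mat{H}_1\cdots\mat{H}_N$ (the starting point is immaterial by cyclicity of the trace), and the $<2>$-unfolding followed by $\Trace$ closes the two remaining boundary bonds. Equivalently, and probably cleaner to write up, I would avoid Kronecker products entirely: expand everything into the single multilinear sum $\sum_{\{r_n\},\{s_n\},\{i_n\}}\prod_{n=1}^N\mat{G}_n(i_n)(r_n,r_{n+1})\mat{Z}_n(i_n)(s_n,s_{n+1})$ with cyclic conventions $r_{N+1}=r_1$ and $s_{N+1}=s_1$, perform the $i_n$-summations to produce the factors $\tensor{P}_n$, and verify that the contracted chain plus trace evaluates this very same sum.

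The conceptual content is short; the main obstacle is the index bookkeeping in this last identification — tracking which mode of each $\tensor{P}_n$ carries which $R$- or $S$-bond through every contraction, and checking that the $<2>$-unfolding orders the four surviving modes so that the final trace closes the $R$-loop and the $S$-loop separately rather than crossing them. This is exactly the tedious verification that is acknowledged and omitted in the proof of \Cref{prop:subchain_gram}, and I would dispatch it by the same reduction, inducting on the length of the chain so that each inductive step invokes only one application of the contracted-product definition.
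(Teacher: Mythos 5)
Your argument is correct in substance but follows a genuinely different route from the paper's. The paper works with a single distinguished mode $n$: it invokes the unfolding identity $\mat{A}_{[n]} = \mat{G}_{n(2)}(\mat{G}^{\ne n}_{[2]})^\intercal$, rewrites the inner product as a trace of a product of four matrices, applies the cyclic property, and then handles the bulk of the chain by citing \Cref{prop:subchain_gram}, with a separate identification of the remaining factor $\mat{G}_{n(2)}^\intercal \mat{Z}_{n(2)}$ as $(\tensor{P}_n^\intercal)_{<2>}$. You instead expand element-wise via the trace formula, use $\Trace(\mat{M})\Trace(\mat{N}) = \Trace(\mat{M}\otimes\mat{N})$ together with the mixed-product property, and sum index by index to obtain the transfer matrices $\mat{H}_n = \sum_{i_n}\mat{G}_n(i_n)\otimes\mat{Z}_n(i_n)$ — the standard matrix-product-states argument. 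Your route is more symmetric (no special mode), self-contained (it does not rely on \Cref{prop:subchain_gram}, whose own proof the paper omits as tedious), and makes the algebraic content transparent; the paper's route buys reuse of machinery it has already set up and stays closer to its unfolding-based notation. Both proofs defer the same final burden: verifying that the stated combination of the $\times_{2,4}^{1,3}$ chain, the $<\!2\!>$-unfolding, and $\Trace$ reproduces $\Trace(\mat{H}_1\cdots\mat{H}_N)$ under the paper's index conventions. You correctly flag this as the real obstacle — note in particular that the $<\!2\!>$-unfolding groups modes $(1,2)$ against $(3,4)$, so its matrix diagonal pairs an $R$-boundary index with an $S$-boundary index rather than closing the $R$-loop and $S$-loop separately; making the claimed closure come out right (or exposing a convention mismatch in the statement) is exactly the bookkeeping that neither your sketch nor the paper's proof, which ends with "using the correspondence between elements," actually writes down. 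Your proposed induction on the chain length is the right way to discharge it.
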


\begin{proof}
	According to Theorem 3.5 in \cite{zhao2016TensorRing} and the definition of inner product, we have
	\begin{align*}
		\langle \tensor{A}, \tensor{B} \rangle &= \langle \mat{A}_{[n]}, \mat{B}_{[n]} \rangle = \langle \mat{G}_{n(2)} (\mat{G}^{\ne n}_{[2]})^\intercal, \mat{Z}_{n(2)} (\mat{Z}^{\ne n}_{[2]})^\intercal \rangle \\ 
		&= \Trace \left( \mat{G}^{\ne n}_{[2]} \mat{G}_{n(2)}^\intercal \mat{Z}_{n(2)} (\mat{Z}^{\ne n}_{[2]})^\intercal \right) 
		= \Trace \left( \mat{G}_{n(2)}^\intercal \mat{Z}_{n(2)} (\mat{Z}^{\ne n}_{[2]})^\intercal \mat{G}^{\ne n}_{[2]} \right),
	\end{align*}
	where the last equation is from the property of trace.
	Using \Cref{prop:subchain_gram}, we know that 
	\begin{equation*}
	    (\mat{Z}^{\ne n}_{[2]})^\intercal \mat{G}^{\ne n}_{[2]} = \left( \tensor{P}_{n-1} \times_{2,4}^{1,3} \cdots \times_{2,4}^{1,3} \tensor{P}_{1} \times_{2,4}^{1,3} \tensor{P}_{N} \times_{2,4}^{1,3} \cdots \times_{2,4}^{1,3} \tensor{P}_{n+1} \right)_{<2>}.
	\end{equation*}
	On the other hand, we can get $\mat{G}_{n(2)}^\intercal \mat{Z}_{n(2)} = (\tensor{P}_n^\intercal)_{<2>}$ by carefully examining the difference between the classical mode-$n$ unfolding and the mode-$n$ unfolding, and a definition of transpose for $\tensor{P}_n$, i.e., $\tensor{P}_n^\intercal \xlongequal{def} \textsc{Permute}(\tensor{P}_n,[2,1,4,3])$. 
	Thus, combining these together and using the correspondence between elements, the desired result can be obtained.
\end{proof}

\begin{remark}
	Considering that  $\| \tensor{A} \|^2_F = \sqrt{\langle \tensor{A}, \tensor{A} \rangle}$, we can obtain the corresponding expression of the \textbf{Frobenius norm} of a tensor in TR representation. These expressions are equivalent to the counterparts in \cite{zhao2016TensorRing} but with a more concise format.
\end{remark}

%%=================%%
%%   Experiments   %%
%%=================%%
\section{Numerical Experiments}
\label{sec:experiments}
%In this section, we explore several examples that demonstrate the performance and accuracy of our methods.
To test our proposed methods, we choose TR-ALS as the main baseline. All experiments are run on Matlab R2020b on a computer with an Intel Xeon W-2255 3.7 GHz CPU and 256 GB memory. 
Additionally, we also use the MATLAB Tensor Toolbox \cite{kolda2006TensorToolbox}.

All the synthetic tensors have the same dimensions in all modes and they are generated by creating $N$ TR-cores of size $R_{true} \times I \times R_{true}$ firstly.  
Note that for the target rank of all algorithms, we denote it as $R$. These TR-cores may be generated in different ways, which will be detailed in subsequent experiments.
Then, we form the tensor by $\tensor{X}_{true} = \TR \left( \{\tensor{G}_n\}_{n=1}^N \right)$. 
Finally, the noise is added to obtain the observed tensor:  
\begin{equation*}
\tensor{X} = \tensor{X}_{true} + \eta \left( \frac{\| \tensor{X}_{true} \|_F}{\| \tensor{N} \|_F}\right) \tensor{N},
\end{equation*}
where the entries of $\tensor{N} \in \bb{R}^{I \times \cdots \times I}$ are drawn from a standard normal distribution and the parameter $\eta$ is the amount of noise.

As stated in the discussions of computational complexities, we use the random Gaussian tensors to initiate the TR-cores for all the related algorithms. 
For the termination criterion, all algorithms are terminated only after the maximum number of iterations being reached, and, unless otherwise stated, we set the maximum number to be 20. Then,  the running time and the relative errors via the formula 
\begin{equation*}
	\frac{\left\|\tensor{X} -\hat{\tensor{X}}\right\|_F}{\|  \tensor{X} \|_F}=\frac{\left\| \tensor{X}-\TR \left( \{\hat{\tensor{G}}_n\}_{n=1}^N \right)  \right\|_F}{\| \tensor{X} \|_F},
\end{equation*}
where the TR-cores $\hat{\tensor{G}}_n$ are computed by various algorithms, can be reported and compared. Again, unless otherwise stated, the numerical results are the averages over 10 runs.

Next, we consider the computation of the error $\|\tensor{X}- \hat{\tensor{X}} \|_F$. In our specific experiments, we compute it by forming the explicit representation of $\hat{\tensor{X}}$ using the TR-cores output by any algorithms. 
This is an accurate but less efficient method.  
We now introduce an approach to approximating the error, 
which exploits the identity $\| \tensor{X} - \hat{\tensor{X}} \|^2_F = \| \tensor{X} \|^2_F - 2 \langle \tensor{X}, \hat{\tensor{X}} \rangle + \| \hat{\tensor{X}} \|^2_F$ and computes $\langle \tensor{X}, \hat{\tensor{X}} \rangle$ and $\| \hat{\tensor{X}} \|_F$ cheaply by using the temporary quantities already computed by the ALS iterations. Note that $\| \tensor{X} \|_F$ is pre-computed and does not change over iterations. More specifically,
in the case of \textbf{TR-ALS-NE}, we have
\begin{equation*}
	\langle \tensor{X}, \hat{\tensor{X}} \rangle = \langle \mat{X}_{[N]}, \hat{\mat{G}}_{N(2)} (\hat{\mat{G}}^{\ne N}_{[2]})^\intercal \rangle = \langle \mat{X}_{[N]} \hat{\mat{G}}^{\ne N}_{[2]}, \hat{\mat{G}}_{N(2)} \rangle = \langle \mat{M}_{N}, \hat{\mat{G}}_{N(2)} \rangle,
\end{equation*}
where $\mat{M}_{N}$ is the result of the MTTSP computation in the mode $N$, i.e., the mode of the last subiteration.
Likewise, we have 
\begin{align*}
\| \hat{\tensor{X}} \|^2_F &= \langle \hat{\mat{G}}_{N(2)} (\hat{\mat{G}}^{\ne N}_{[2]})^\intercal, \hat{\mat{G}}_{N(2)} (\hat{\mat{G}}^{\ne N}_{[2]})^\intercal \rangle \\
&= \langle (\hat{\mat{G}}^{\ne N}_{[2]})^\intercal \hat{\mat{G}}^{\ne N}_{[2]}, (\hat{\mat{G}}_{N(2)})^\intercal \hat{\mat{G}}_{N(2)} \rangle = \langle \mat{S}_{n<2>}, (\tensor{P}_N^\intercal)_{<2>} \rangle,
\end{align*}
where $\tensor{P}_N^\intercal =\textsc{Permute}(\tensor{P}_N,[2,1,4,3])$. 
Similarly, in the case of \textbf{TR-ALS-QR}, we have 
\begin{equation*}
\langle \tensor{X}, \hat{\tensor{X}} \rangle = \langle \mat{W}_{N}, \hat{\mat{G}}_{N(2)} \mat{R}_{[2]}^\intercal \rangle ~~\text{and}~~\| \hat{\tensor{X}} \|^2_F  = \langle \mat{R}_{[2]}^\intercal \mat{R}_{[2]}, \mat{R}_{N(2)}^\intercal \mat{R}_{N(2)} \rangle,
\end{equation*}
and in the case of \textbf{TR-ALS-QRNE}, we have \begin{equation*}
\langle \tensor{X}, \hat{\tensor{X}} \rangle = \langle \mat{Y}_{N} \mat{V}_{N[2]}, \hat{\mat{G}}_{N(2)} \rangle ~~\text{and}~~\| \hat{\tensor{X}} \|^2_F = \langle  \mat{V}_{N[2]}^\intercal \mat{V}_{N[2]}, \mat{R}_{N(2)}^\intercal \mat{R}_{N(2)} \rangle.
\end{equation*}
The main reason why we don't employ the above method in our experiments is that that part of time is not what we want to focus on for the comparison of various algorithms. Our main concern is the valid differences of methods. Meanwhile, we need the exact error more in the experiments for ill-conditioned datasets. %we still use this error in all experiments to keep fairness and to do so in order to strip out any running time effects of checking convergence and error calculations and only compare aspects that differ between methods.

%Finally, unless otherwise stated, we will set the maximum number of iterations to 20, and the numerical results will be the averages over 10 runs.

%%==================
\subsection{Efficiency of TR-ALS-NE}
\label{ssec:syn-A}
We use two experiments to test the speedup of TR-ALS-NE over TR-ALS. 
\paragraph{Experiment A-\uppercase\expandafter{\romannumeral1}.}
Our first experiment ignores the convergence of TR-ALS and TR-ALS-NE and merely compares the computational time for each iteration of these two methods. %We investigate the convergence and solution quality in subsequent experiments.
 
We consider 3rd- and 5th-order tensors of various sizes without noise. 
Each TR-core is generated by a random Gaussian tensor with entries drawn independently from a standard normal distribution. \Cref{fig:a1_per_iter} shows how much cheaper each iteration of the ALS is when using TR-ALS-NE. Obviously, TR-ALS-NE runs faster than TR-ALS in both the 3rd- and 5th-order tensors. 

\begin{figure}[htbp] 
	\centering 
	\subfloat[3rd-order tensor, $R_{true}$ = $R$ = 10]{\includegraphics[scale=0.32]{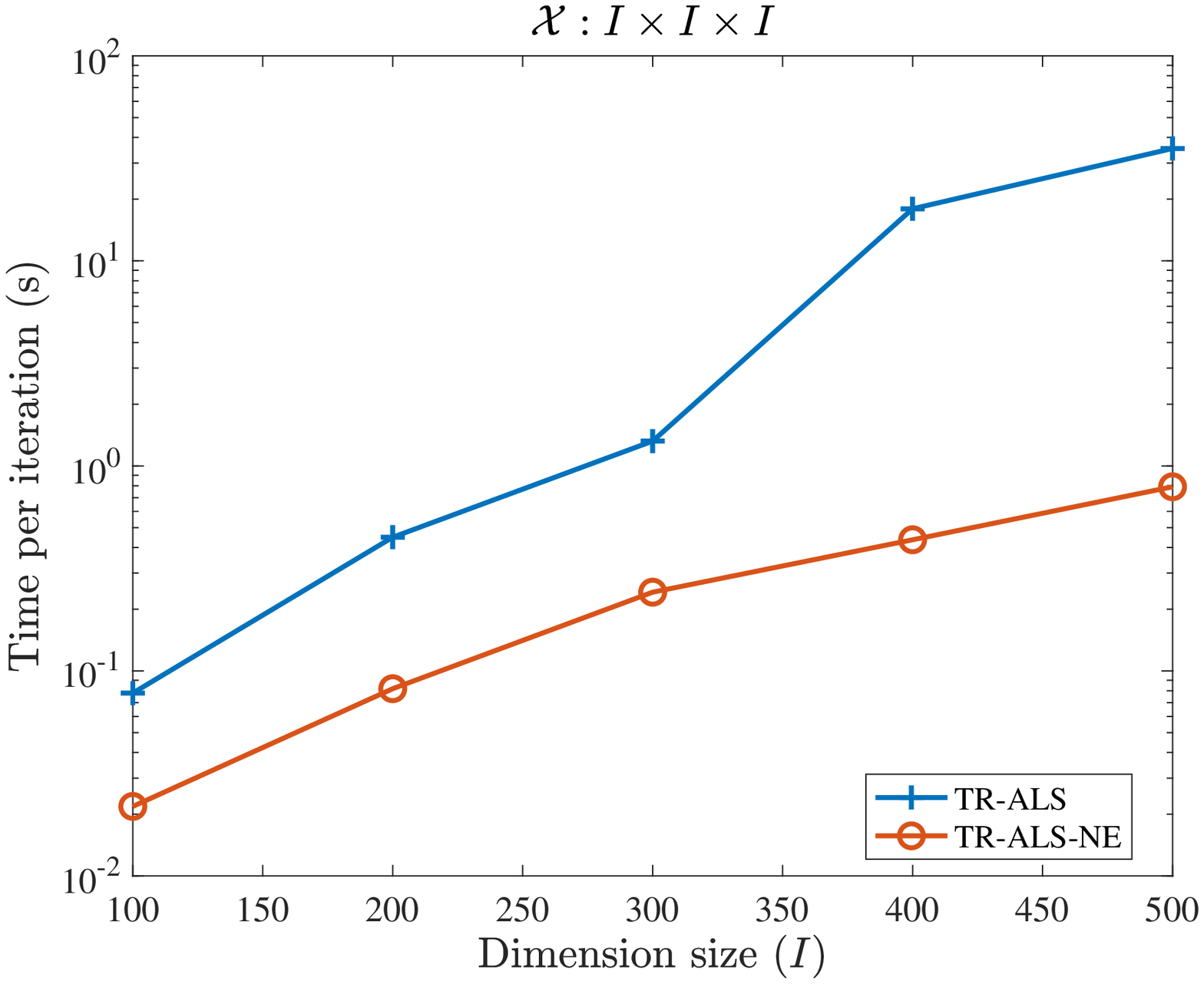}} 
	\subfloat[5th-order tensor, $R_{true}$ = $R$ = 4]{\includegraphics[scale=0.32]{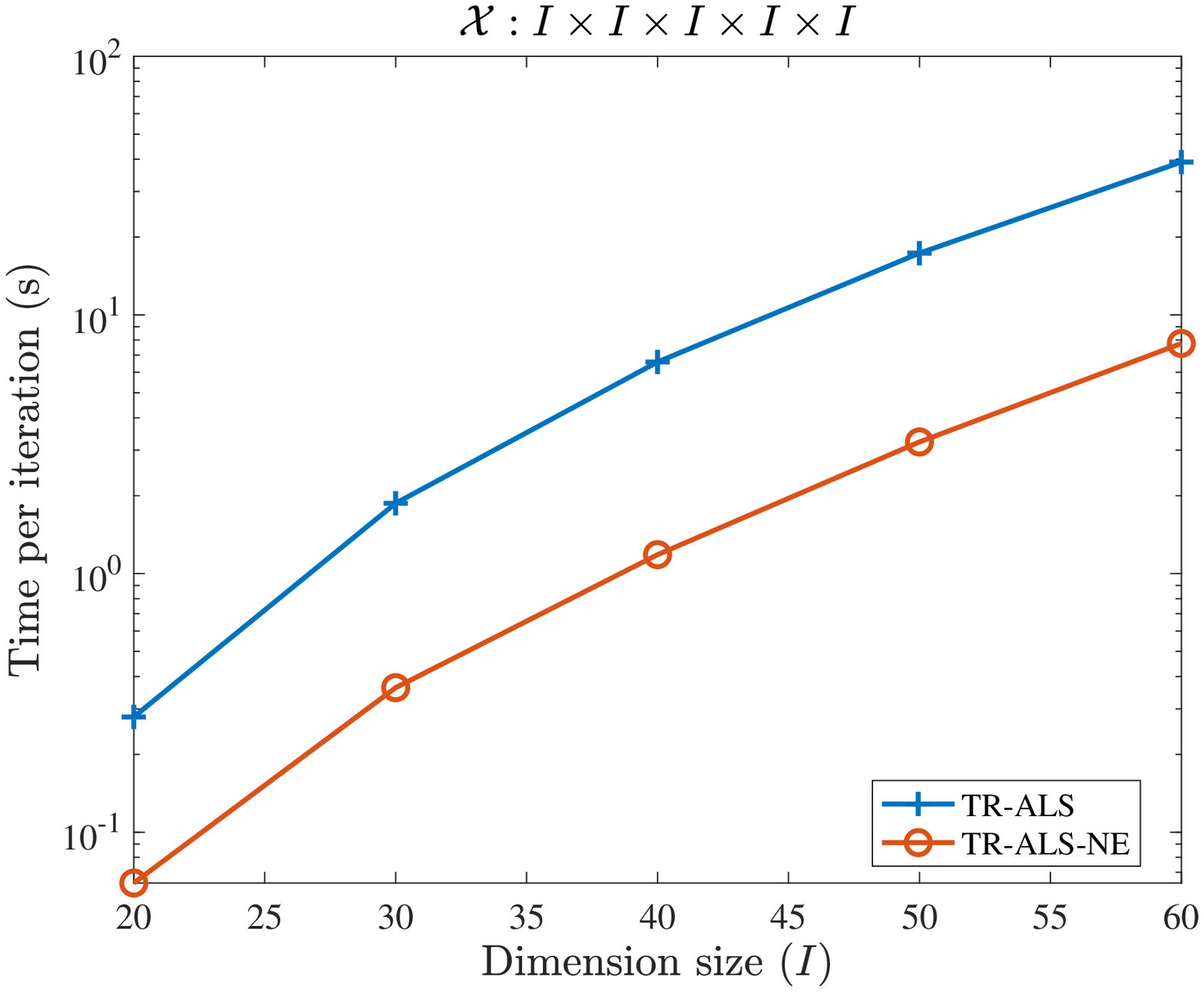}} 
	\caption{Mean time per iteration of TR-ALS and TR-ALS-NE for 3rd- and 5th-order tensors. Each dot represents the mean iteration time over 20 iterations (no checks for convergence).}
	\label{fig:a1_per_iter}
\end{figure}

\paragraph{Experiment A-\uppercase\expandafter{\romannumeral2}.}
Our second experiment considers both the convergence and computational cost of TR-ALS-NE. We still use the data in Experiment A-\uppercase\expandafter{\romannumeral1}. \Cref{fig:a2_conv} shows the numerical results on decreasing trend of the relative errors as the number of iterations and time increase. We can see that TR-ALS-NE can achieve almost the same convergence errors as TR-ALS but with much less computing time. And, the time gap becomes more obvious as the tensor order, i.e., $N$, and the dimensionality, i.e., $I$, increase. This means that our method is more effective for large-scale data.

\begin{figure}[htbp] 
	\centering 
	\subfloat[$\tensor{X}: 300 \times 300 \times 300$, $R_{true}$ = $R$ = 10]{\includegraphics[scale=0.15]{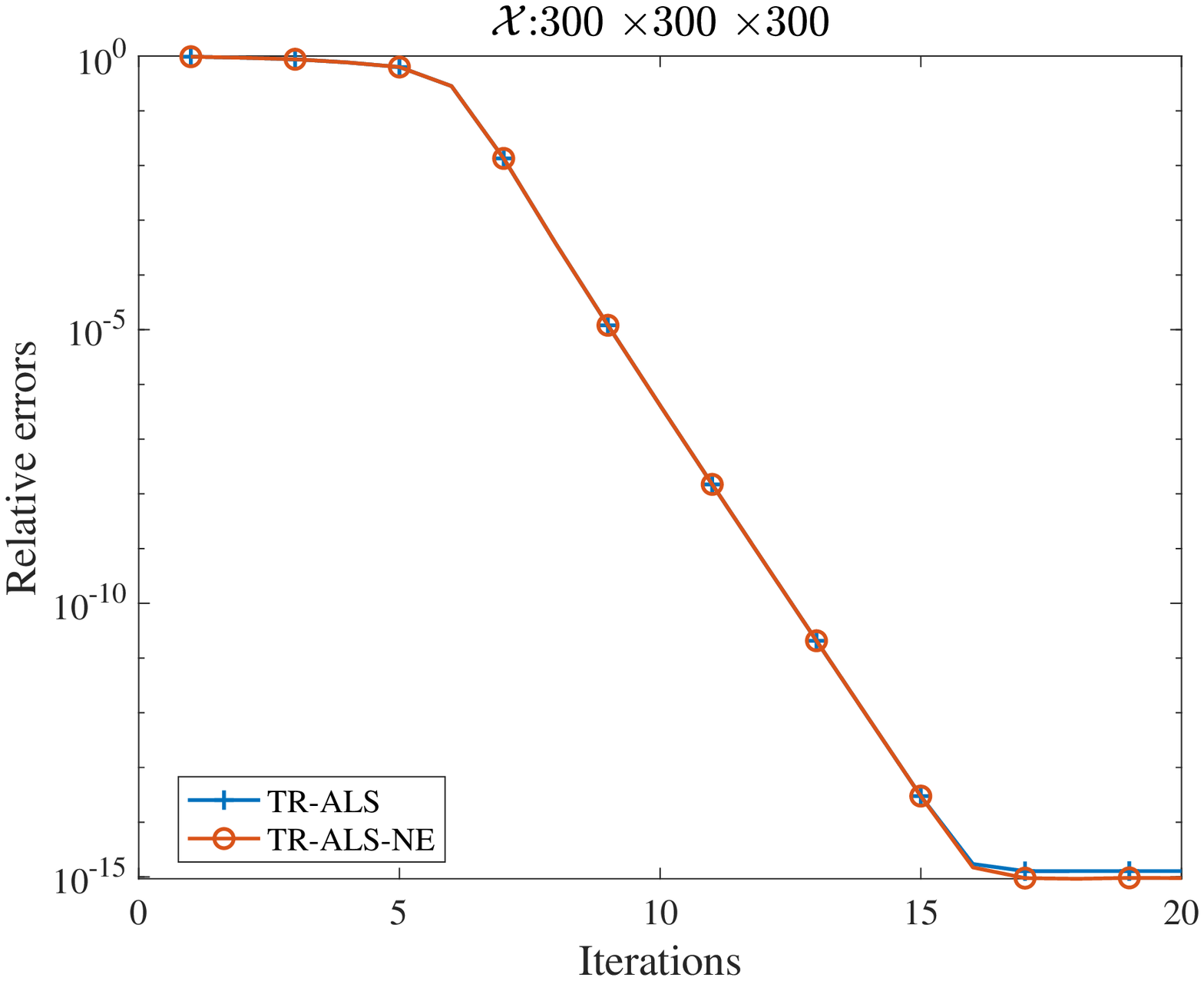}} 
	\subfloat[$\tensor{X}: 300 \times 300 \times 300$, $R_{true}$ = $R$ = 10]{\includegraphics[scale=0.15]{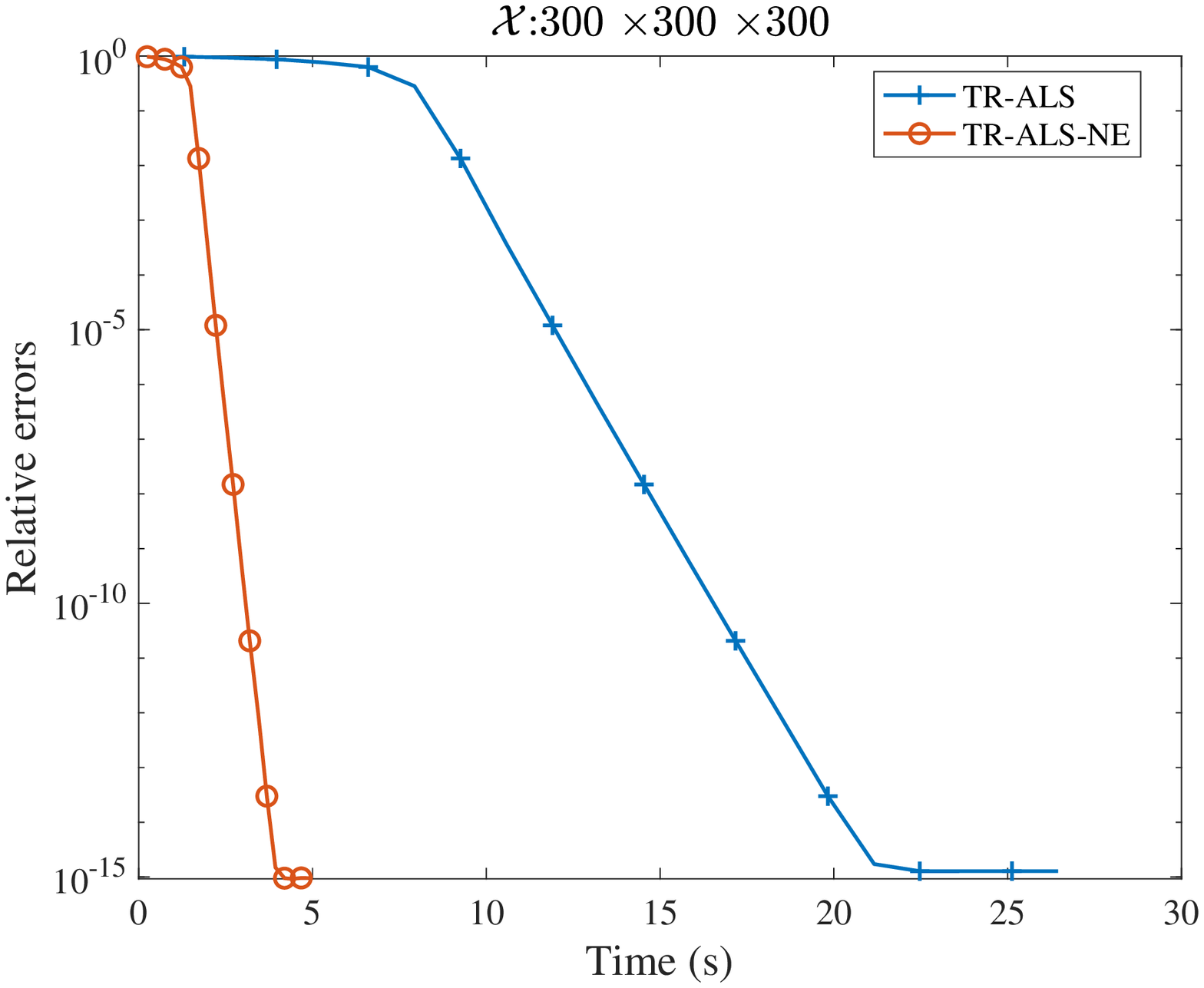}} 
	\subfloat[$\tensor{X}: 40 \times 40 \times 40 \times 40 \times 40$,  $R_{true}$ = $R$ = 5]{\includegraphics[scale=0.15]{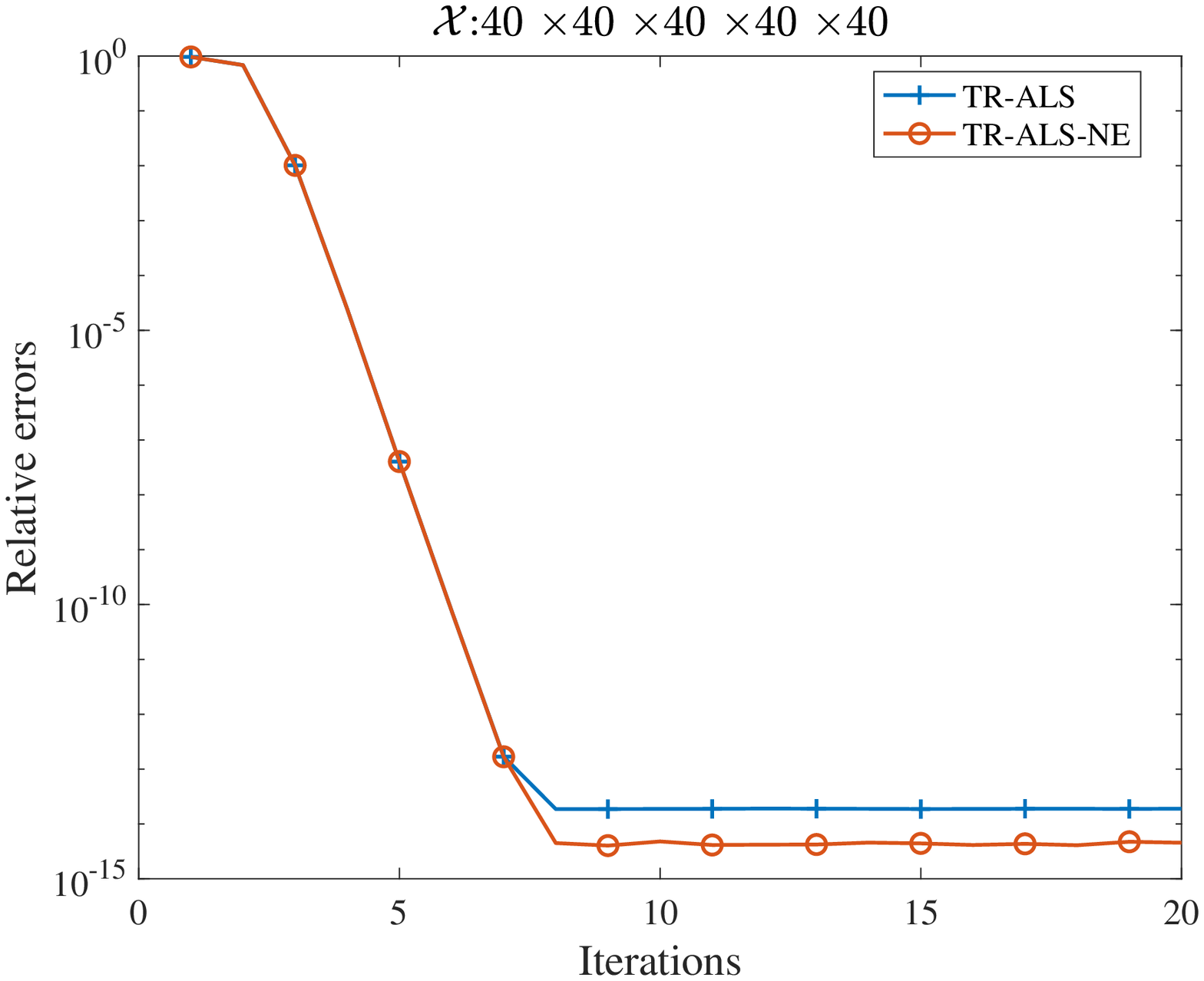}} 
	\subfloat[$\tensor{X}: 40 \times 40 \times 40 \times 40 \times 40$, $R_{true}$ = $R$ = 5]{\includegraphics[scale=0.15]{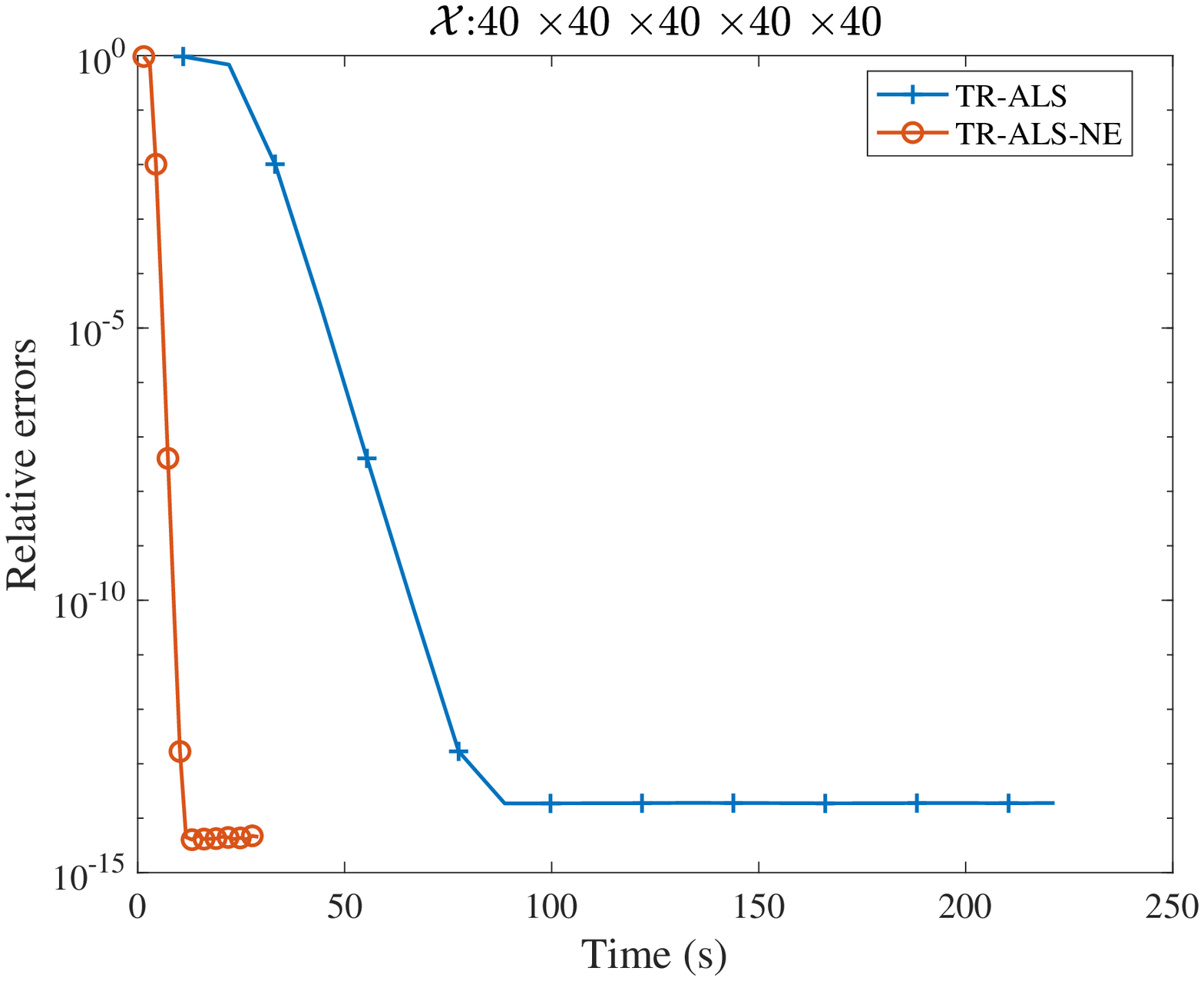}} 
	\quad
	\subfloat[$\tensor{X}: 500 \times 500 \times 500$, $R_{true}$ = $R$ = 10]{\includegraphics[scale=0.15]{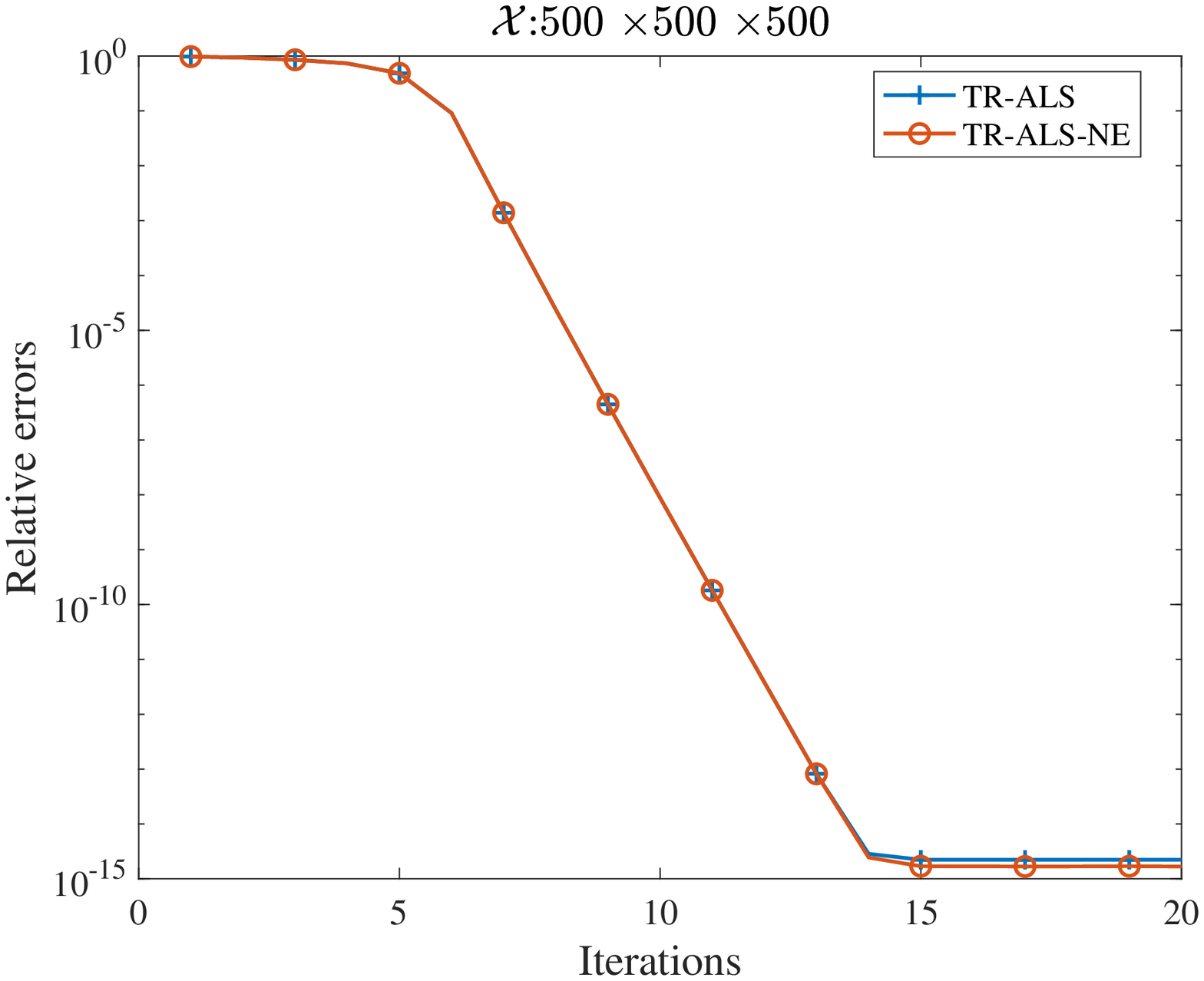}} 
	\subfloat[$\tensor{X}: 500 \times 500 \times 500$, $R_{true}$ = $R$ = 10]{\includegraphics[scale=0.15]{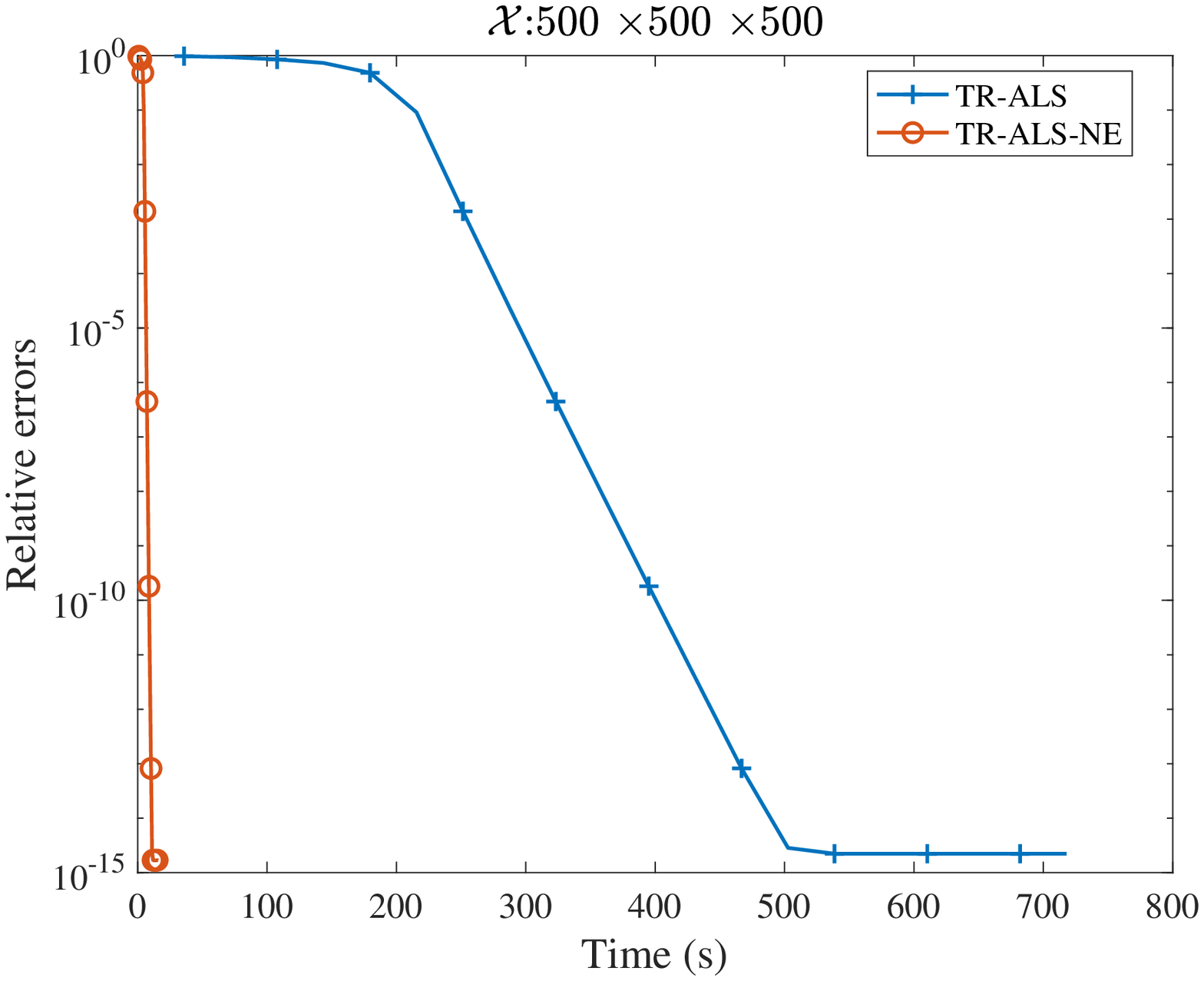}} 
	\subfloat[$\tensor{X}: 60 \times 60 \times 60 \times 60 \times 60$, $R_{true}$ = $R$ = 5]{\includegraphics[scale=0.15]{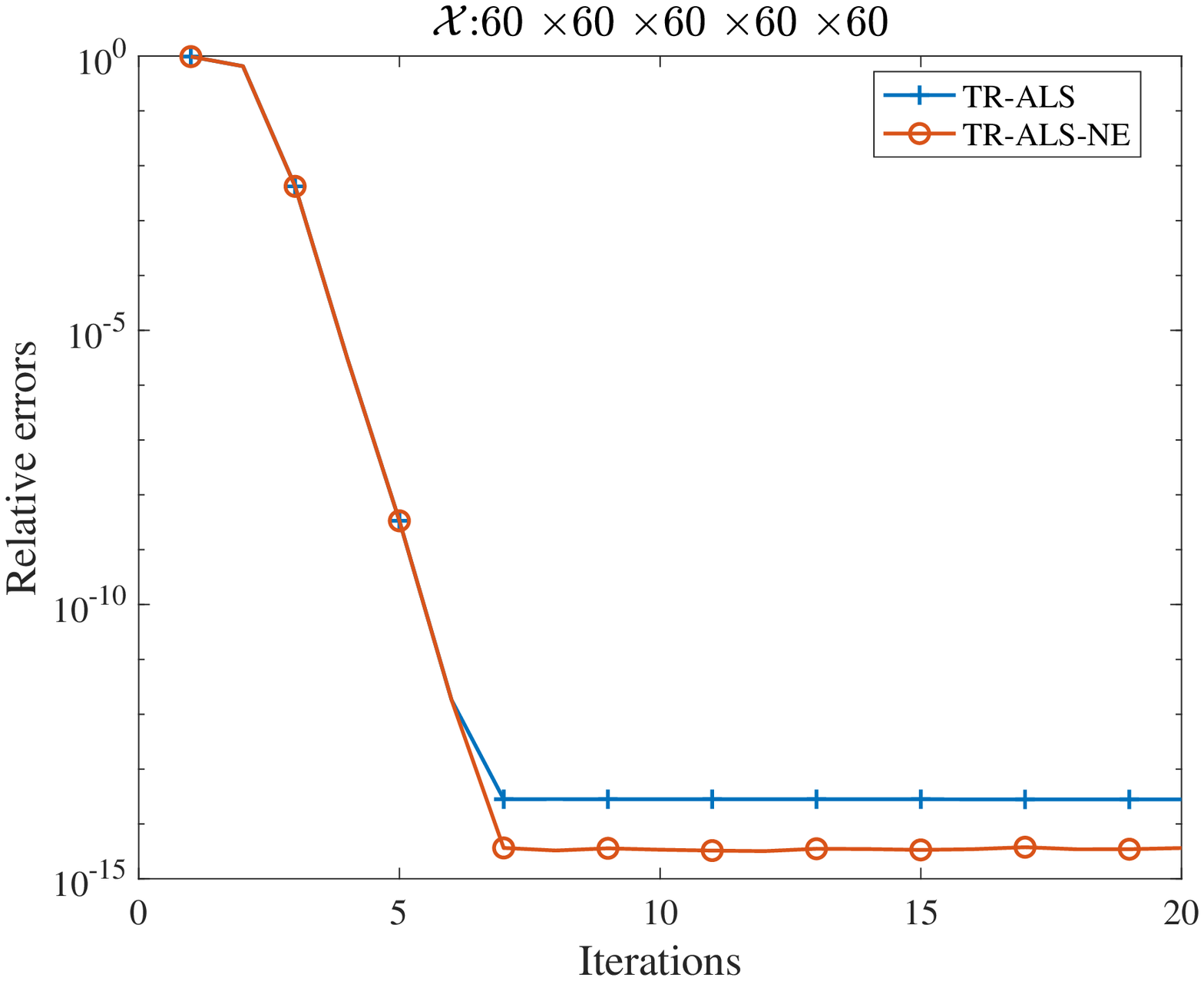}} 
	\subfloat[$\tensor{X}: 60 \times 60 \times 60 \times 60 \times 60$, $R_{true}$ = $R$ = 5]{\includegraphics[scale=0.15]{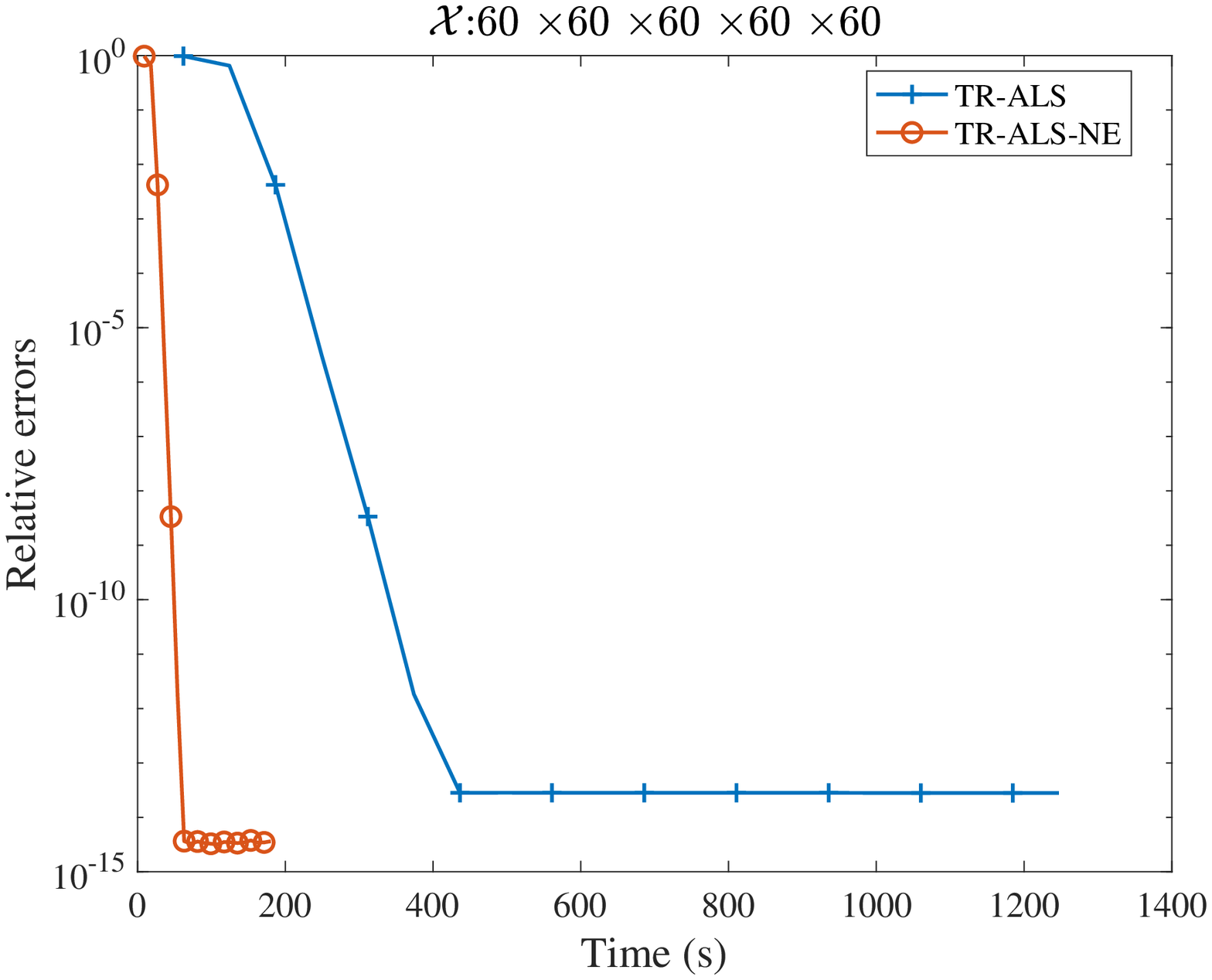}} 
	\caption{Number of iterations v.s. Relative errors and Time v.s. Relative errors output by algorithms for 3rd- and 5th-order tensors.}
	\label{fig:a2_conv}
\end{figure}

%%==================
\subsection{Stability of TR-ALS-QR and TR-ALS-QRNE}
\label{ssec:syn-B}
%As mentioned in the previous section, our motivation for proposing TR-ALS-QR is because the normal equations are sensitive to roundoff error for moderately ill-conditioned problems. 
%Thus, we test their stability on different datasets and we no longer test the performance of TR-ALS in this subsection.
Three experiments on different datasets are presented to show the stronger stability of TR-ALS-QR and TR-ALS-QRNE compared with TR-ALS-NE.

\paragraph{Experiment B-\uppercase\expandafter{\romannumeral1}.}
Our first experiment %in this subsection 
is run on the data used in Experiment A-\uppercase\expandafter{\romannumeral1}, which is well-conditioned.
%In these experiments, we set the maximum number of iterations to 20 and record the relative error and running time of each iteration. The numerical results and plotted quantities are the averages over 10 runs.
The numerical results %of the experiments 
are presented in \Cref{fig:b1_conv}, from which we can see that, in this case, the above three methods have similar performance in accuracy but a little difference in running time. More detailedly, for the two lower-order tensors, the two QR-based algorithms perform a little better, but for the 40-dimensional 5th-order tensor, TR-ALS-NE is a little faster. When switching to the 60-dimensional 5th-order tensor, the performance of all the methods is almost the same. This is mainly because, for well-conditioned data, the advantage of the stability of QR-based methods is not remarkable. In addition, for TR-ALS-QRNE and TR-ALS-QR, the former always runs a little faster than the latter as expected. %These numerical findings are consistent with the theoretical comparison of computational complexities.  

%for well-conditioned data TR-ALS-QR can reach a more accurate relative error than TR-ALS-NE, although the better is not very obvious. 
%In addition, in terms of the running time of the algorithms, we can see that for the two lower-order tensors, TR-ALS-NE takes much longer than the other two algorithms, and for the 40-dimensional 5th-order tensors, both QR-based algorithms are slower than TR-ALS-NE, due to the fact that 5 is a relatively large rank relative to 40-dimensional, but for the 60-dimensional 5th-order tensors, the running time of the three algorithms The time difference is not significant, as 5 is a relatively small rank for this tensor, and these experimental results are reasonable based on the previous complexity analysis. 
%Finally, TR-ALS-QRNE is faster than TR-ALS-QR for both low-order and high-order tensors.

\begin{figure}[htbp] 
	\centering 
	\subfloat[$\tensor{X}: 300 \times 300 \times 300$, $R_{true}$ = $R$ = 10]{\includegraphics[scale=0.15]{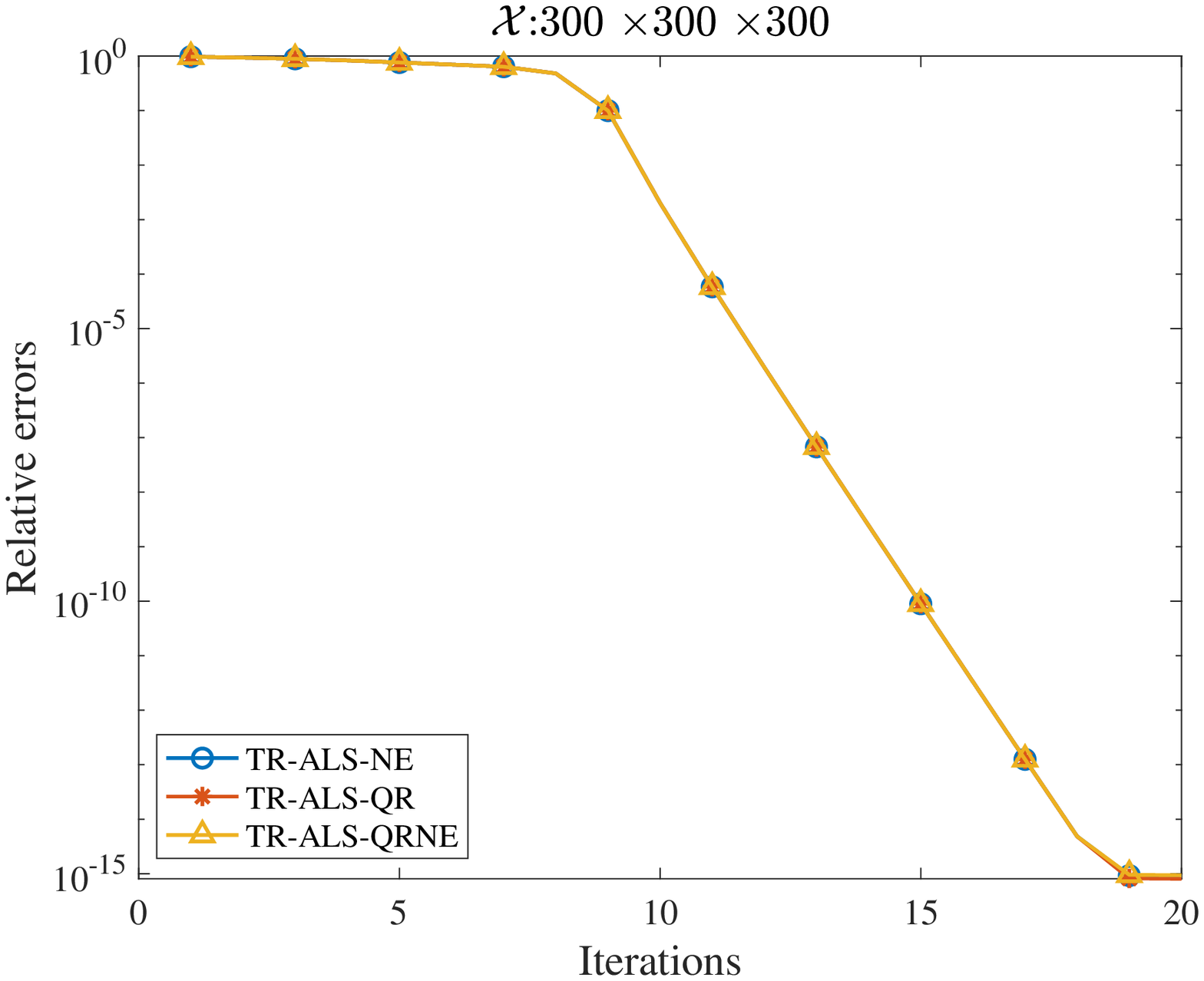}} 
	\subfloat[$\tensor{X}: 300 \times 300 \times 300$, $R_{true}$ = $R$ = 10]{\includegraphics[scale=0.15]{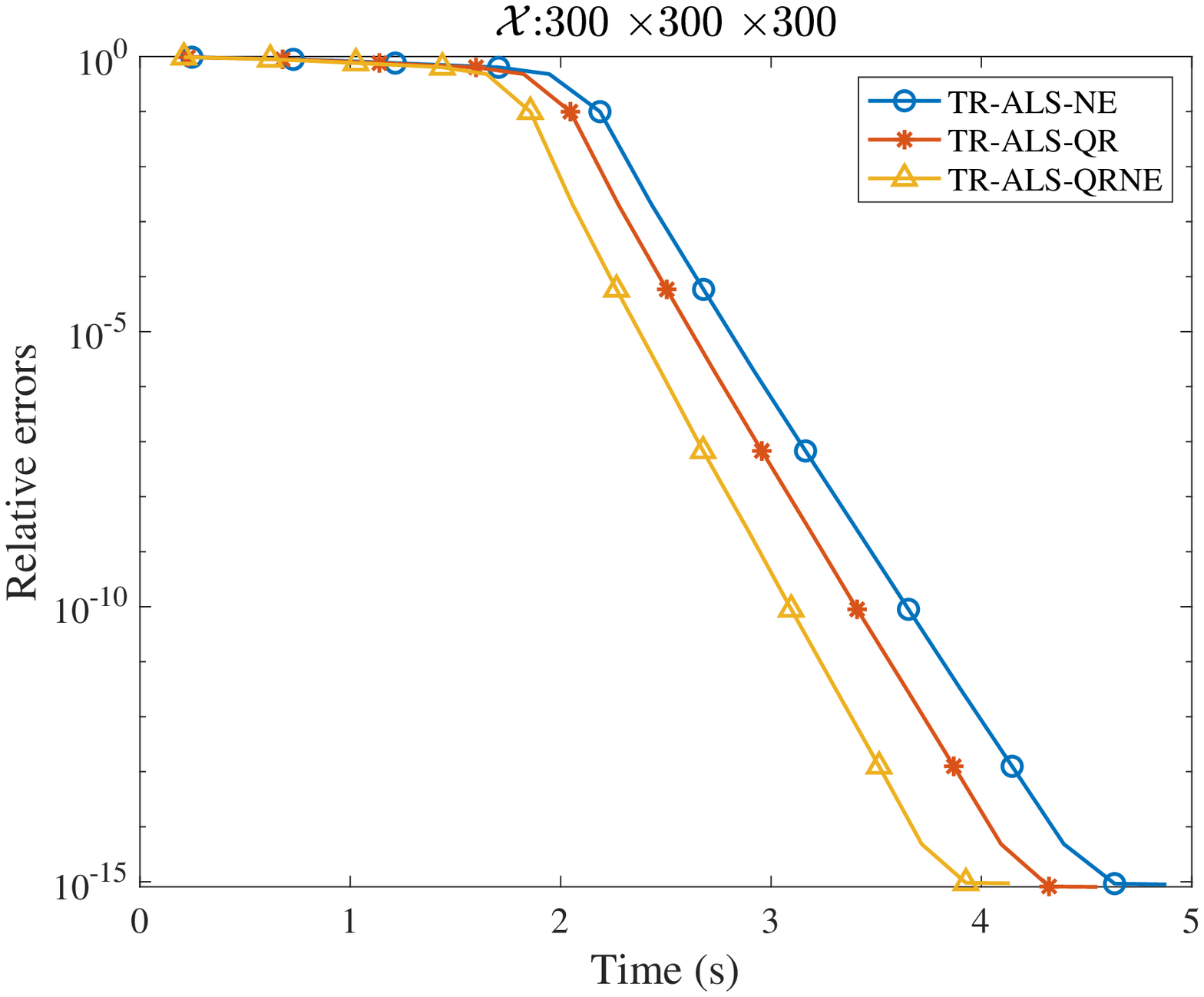}} 
	\subfloat[$\tensor{X}: 40 \times 40 \times 40 \times 40 \times 40$, $R_{true}$ = $R$ = 5]{\includegraphics[scale=0.15]{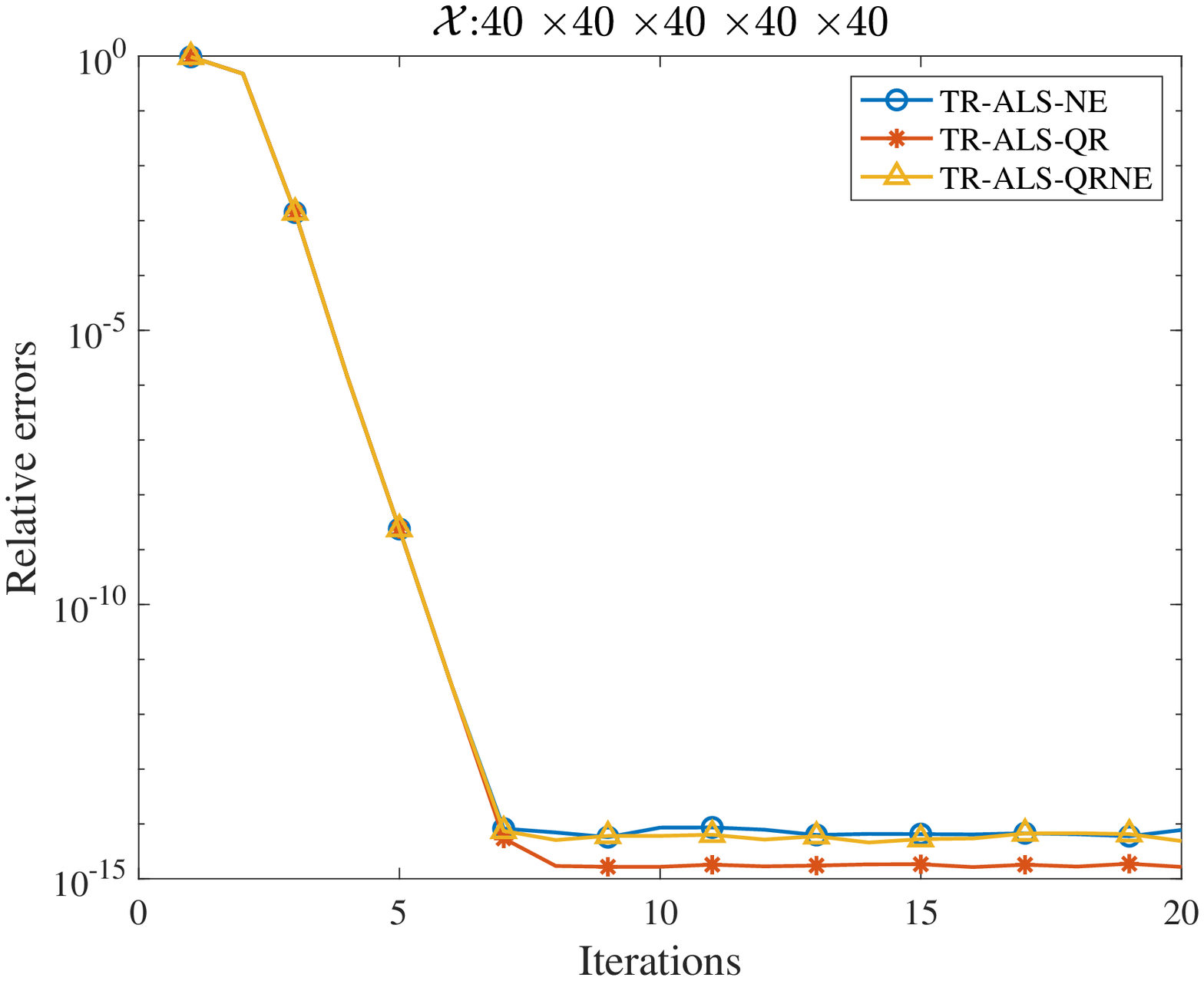}} 
	\subfloat[$\tensor{X}: 40 \times 40 \times 40 \times 40 \times 40$, $R_{true}$ = $R$ = 5]{\includegraphics[scale=0.15]{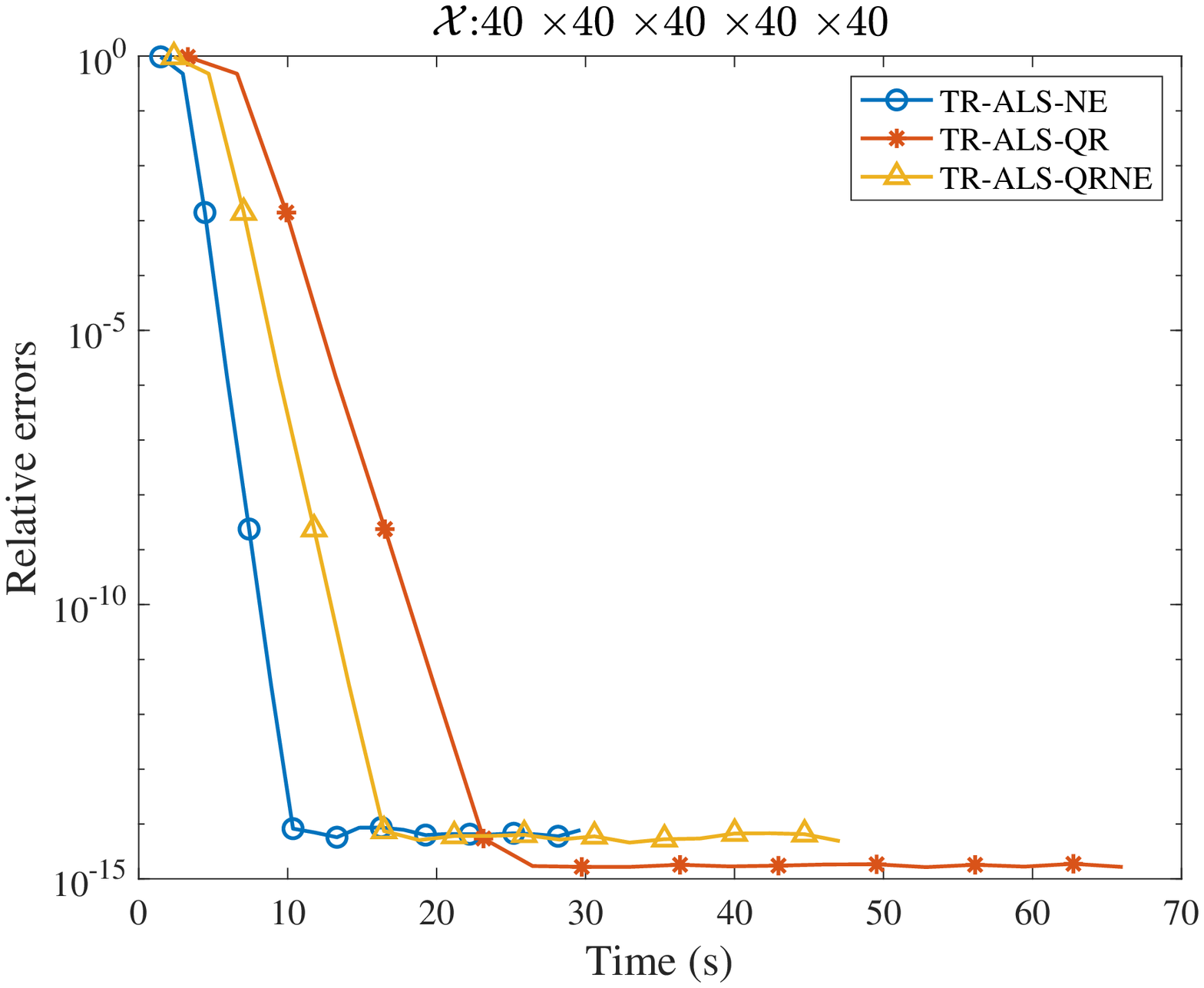}} 
	\quad
	\subfloat[$\tensor{X}: 500 \times 500 \times 500$, $R_{true}$ = $R$ = 10]{\includegraphics[scale=0.15]{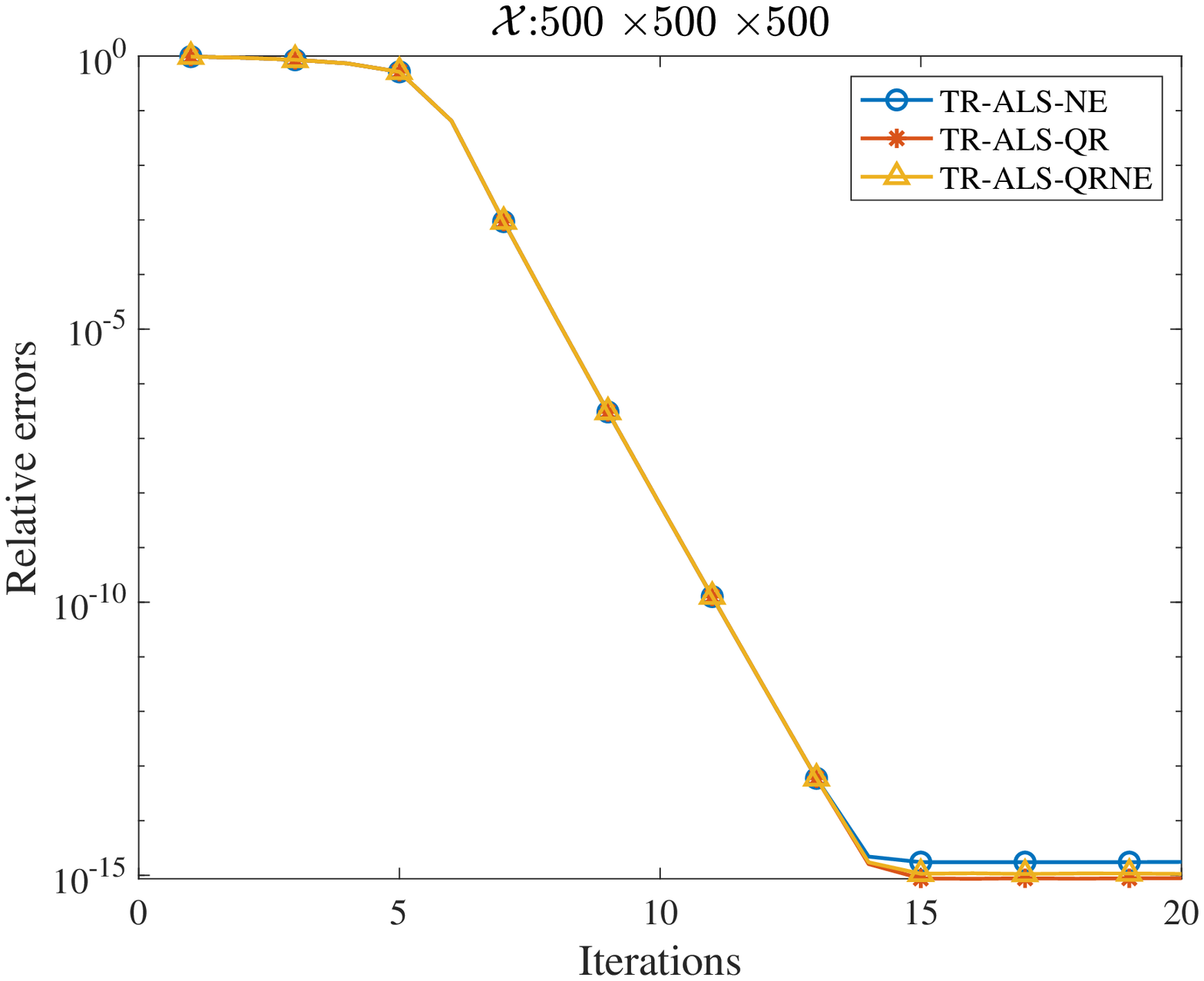}} 
	\subfloat[$\tensor{X}: 500 \times 500 \times 500$, $R_{true}$ = $R$ = 10]{\includegraphics[scale=0.15]{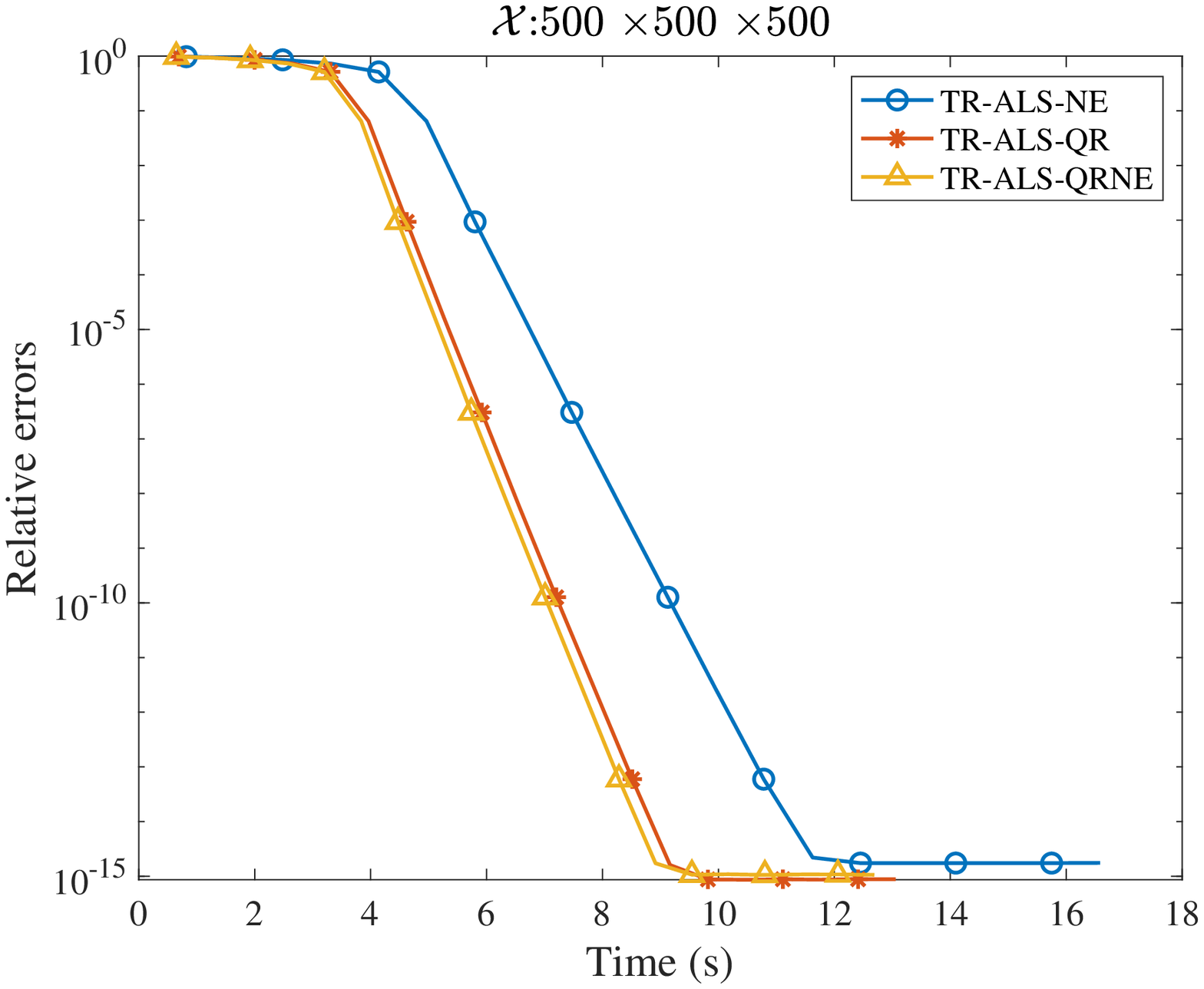}} 
	\subfloat[$\tensor{X}: 60 \times 60 \times 60 \times 60 \times 60$, $R_{true}$ = $R$ = 5]{\includegraphics[scale=0.15]{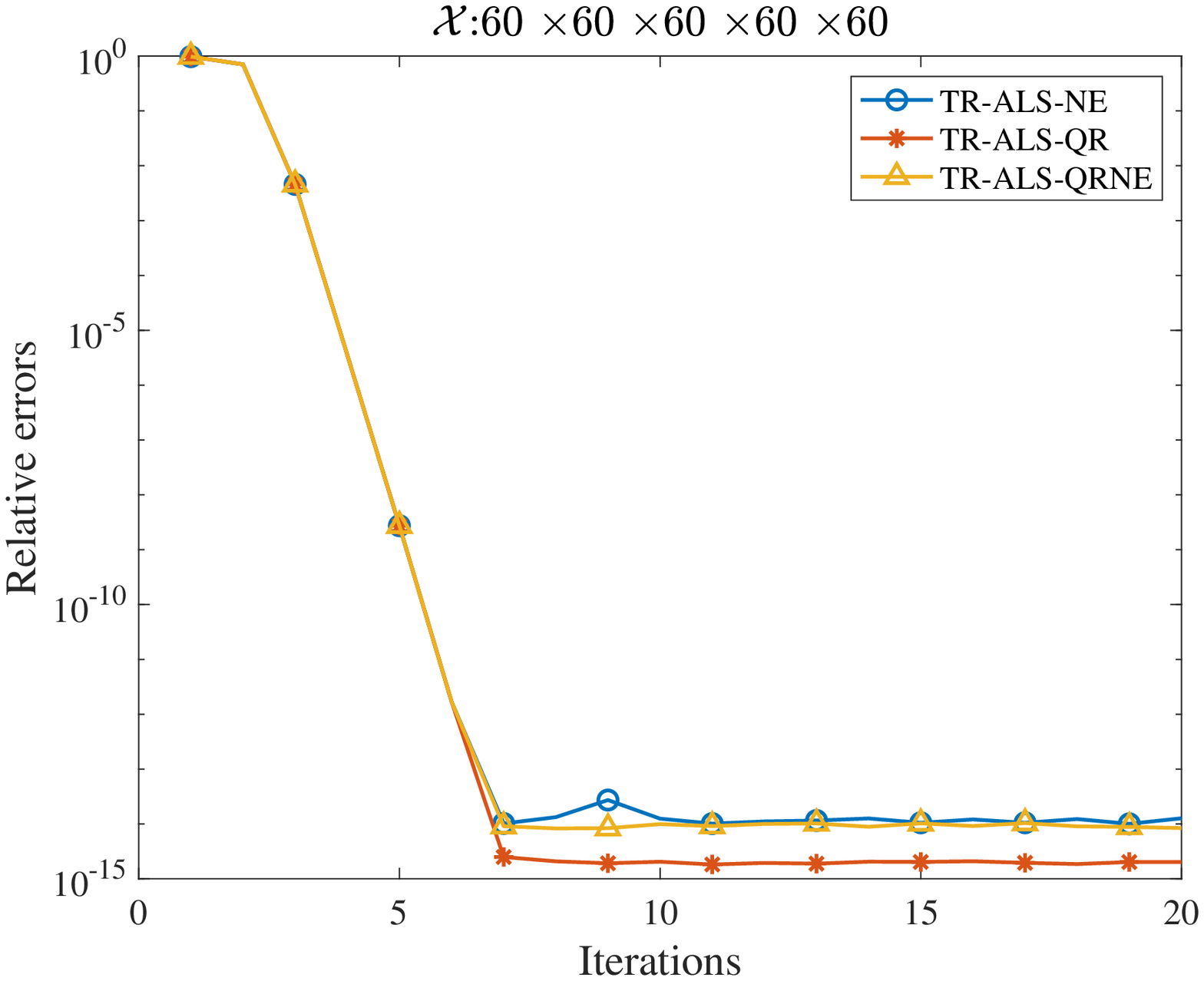}} 
	\subfloat[$\tensor{X}: 60 \times 60 \times 60 \times 60 \times 60$, $R_{true}$ = $R$ = 5]{\includegraphics[scale=0.15]{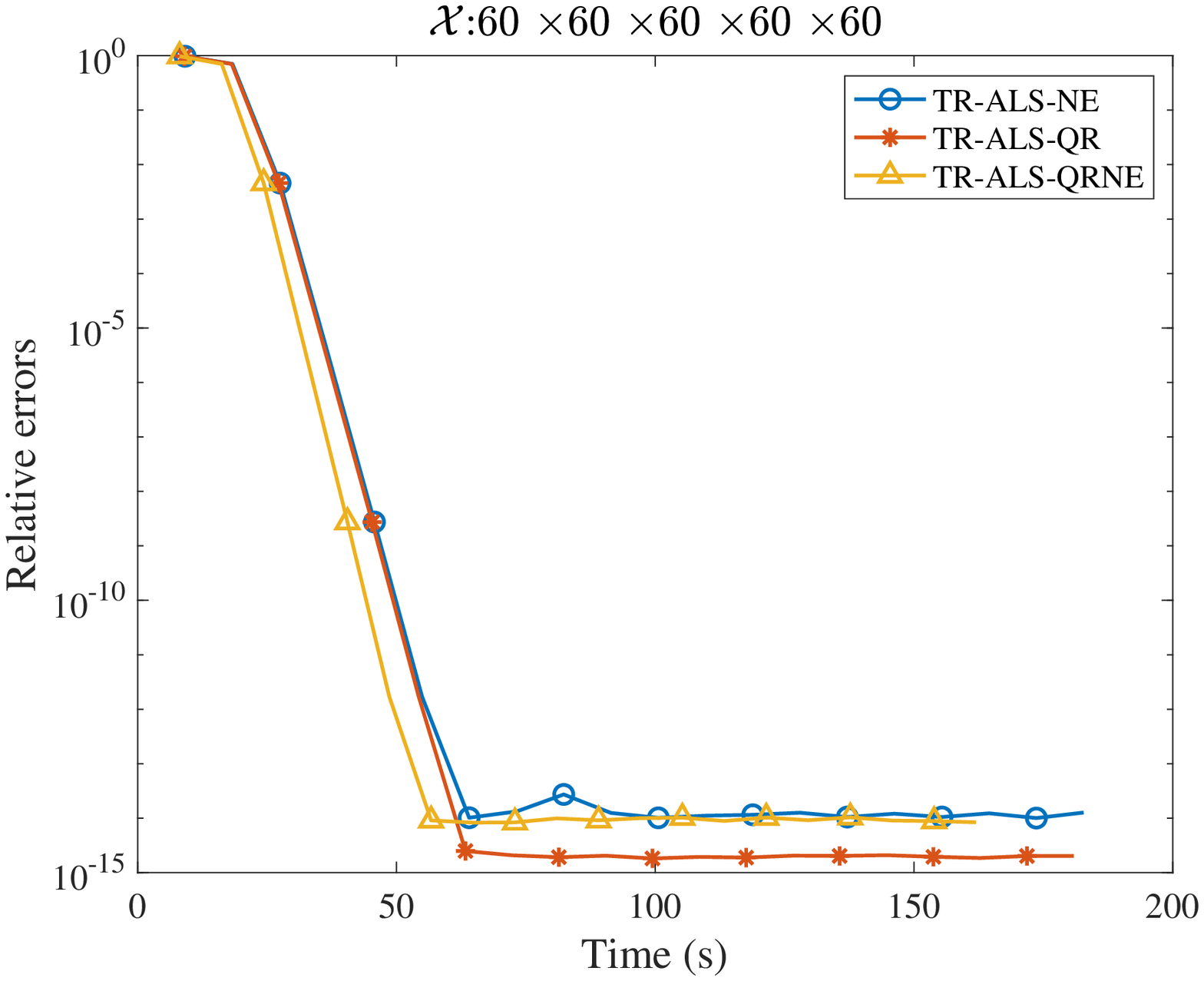}} 
	\caption{Number of iterations v.s. Relative errors and Time v.s. Relative errors output by algorithms for 3rd- and 5th-order tensors.}
	\label{fig:b1_conv}
\end{figure}

\paragraph{Experiment B-\uppercase\expandafter{\romannumeral2}.}
In this experiment, we generate the TR-cores as done in \cite{tomasi2006ComparisonAlgorithms} to control their collinearity. Specifically, we first use
the Matlab function \textsc{Matrandcong}($100$, $25$, $\gamma$) in the MATLAB Tensor Toolbox \cite{kolda2006TensorToolbox}, which can create a matrix of size $100 \times 25$ such that its each column has norm 1 and any two columns have an inner product equal to $\gamma$, to generate three $100 \times 25$ matrices. 
Then, these matrices are reshaped to $5 \times 100 \times 5$ TR-cores. The parameter $\gamma$ mentioned above is used to control the congruence of matrices and hence the collinearity of TR-cores. Note that for the congruence 0.5, the level of collinearity is relatively low. Actually, even for the congruence 0.9, the underlying cores may be only mildly collinear. For further details on the method and the function, see \cite{tomasi2006ComparisonAlgorithms,kolda2006TensorToolbox}.

We test all combinations of three different noise levels $10^{-4}$, $10^{-7}$, and $10^{-10}$, and three different collinearity levels $1-10^{-4}$, $1-10^{-7}$, and $1-10^{-10}$. 
And, we run 100 trials of each algorithm for each configuration. 
The numerical results are presented in \Cref{fig:b2_collinear}, where we see that the combination of noise and collinearity affects the ill-conditioning of the problem in different ways, and the QR-based algorithms have better performance than TR-ALS-NE in terms of relative error. Some specific discussions are in order. 
\begin{itemize}
	\item[1)] For the case of the noise level being $10^{-4}$ and the collinearity level being $1-10^{-4}$, i.e., the data is not ill-conditioned, all the algorithms have similar performance as expected. 
	However, from the first row of the figure, we see that, for the fixed noise level $10^{-4}$, the volatility of the behavior of TR-ALS-NE is rising as the collinearity level increases.
	
	\item[2)] From the first column of the figure, i.e., the case of fixed collinearity level $1-10^{-4}$ and changing noise level, we see a much more accurate solution output by TR-ALS-QR and TR-ALS-QRNE as the noise level decreases. This is mainly because the high levels of Gaussian noise can alleviate the ill-conditioning.
	
	\item[3)] In the remaining 4 cases, i.e., the combinations of higher collinearity ($1-10^{-7}$ and $1-10^{-10}$) and low noise ($10^{-7}$ and $10^{-10}$), which lead to ill-conditioned subproblems, TR-ALS-QR and TR-ALS-QRNE are always robust. That is, they obtain the lowest relative errors and have stable performances in all scenarios. Whereas, TR-ALS-NE tends not to converge quickly and also suffers from higher forward or backward errors.
\end{itemize}
The above numerical findings are consistent with the explanations in \Cref{rem:tr-als-qr} and the theoretical discussions on computational complexities in \Cref{ssec:tr_als_qene}.

\begin{figure}[htbp] 
	\centering 
	\subfloat[$\eta = 10^{-4}$, $\gamma = 1-10^{-4}$]{\includegraphics[scale=0.207]{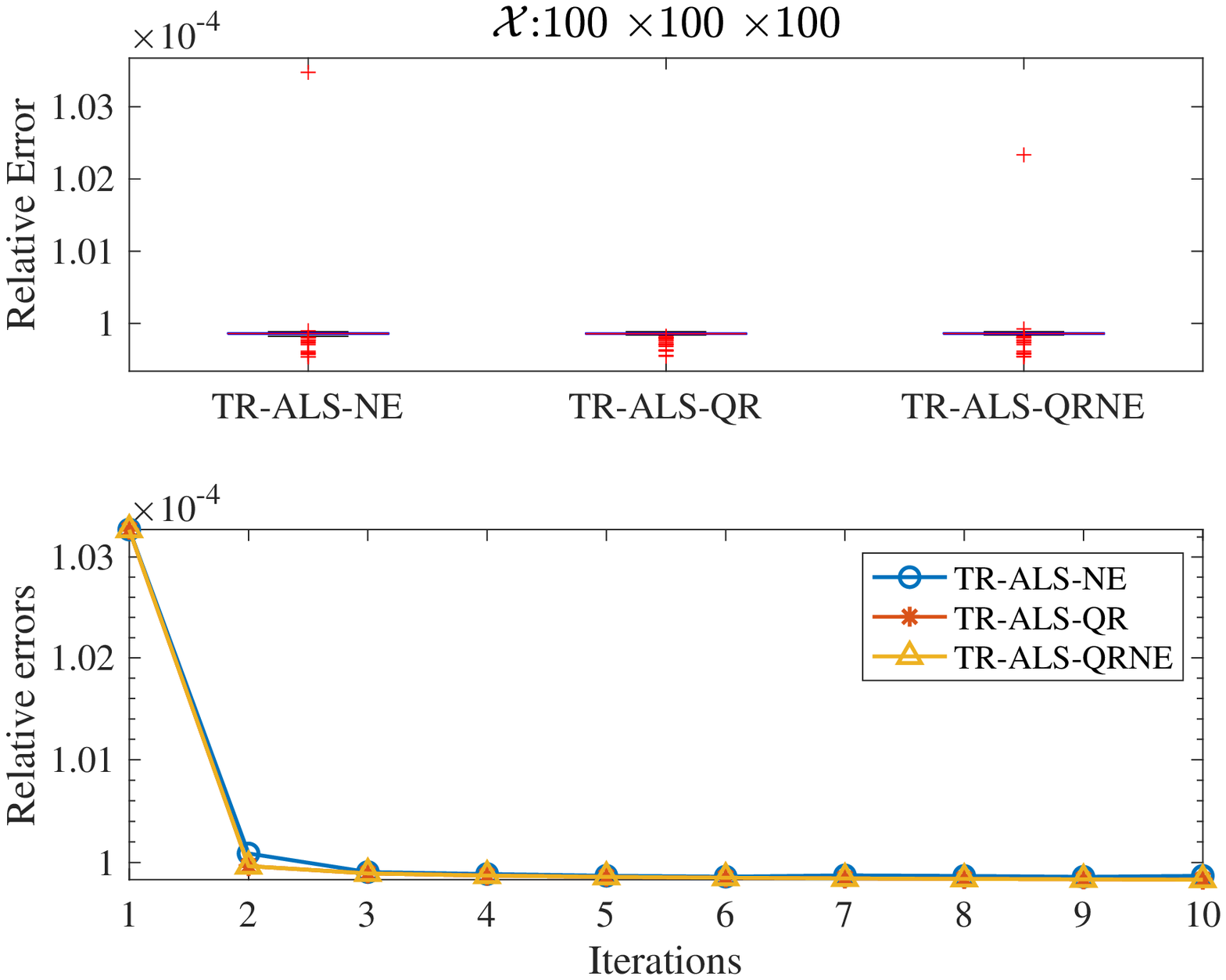}} 
	\subfloat[$\eta = 10^{-4}$, $\gamma = 1-10^{-7}$]{\includegraphics[scale=0.207]{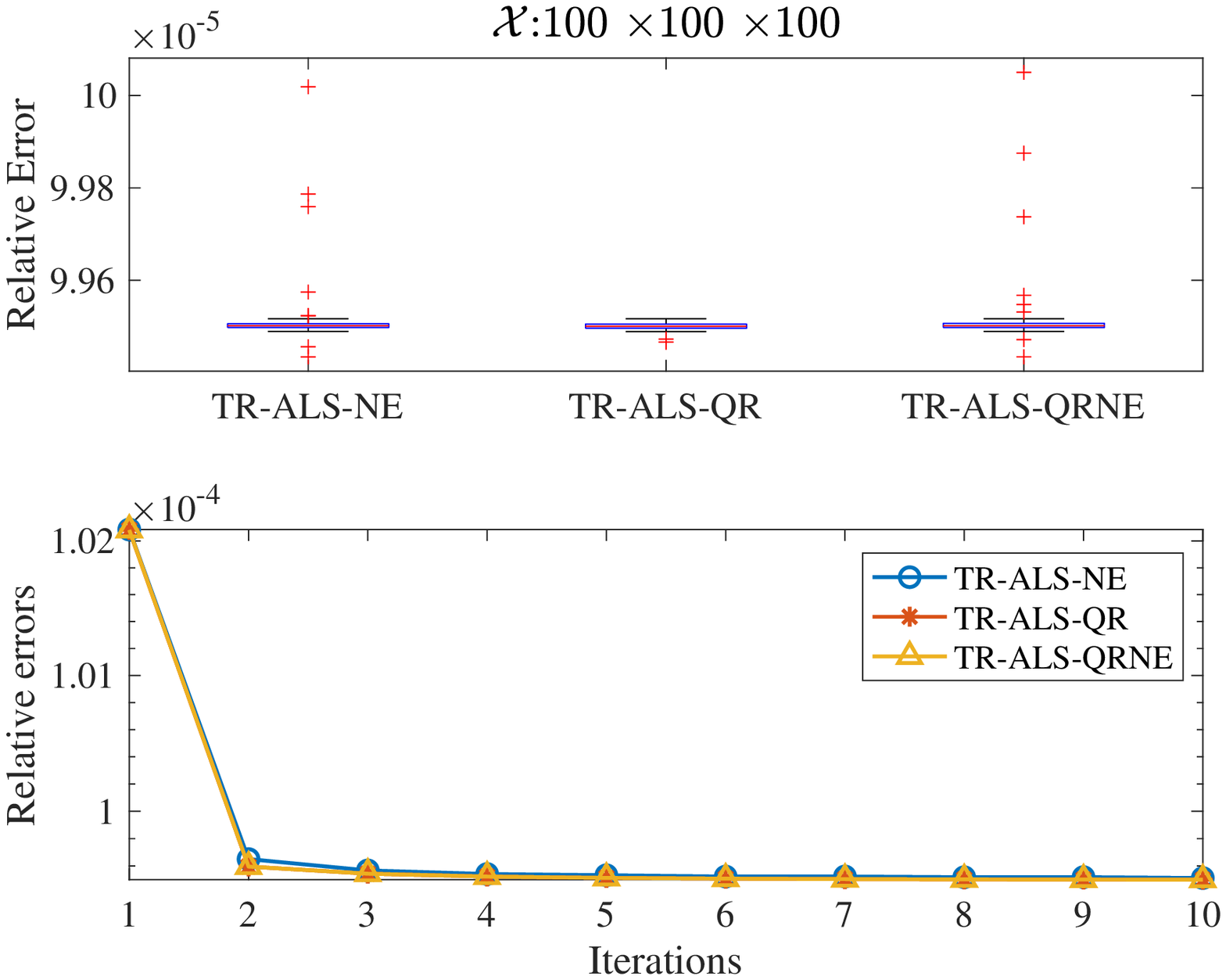}} 
	\subfloat[$\eta = 10^{-4}$, $\gamma = 1-10^{-10}$]{\includegraphics[scale=0.207]{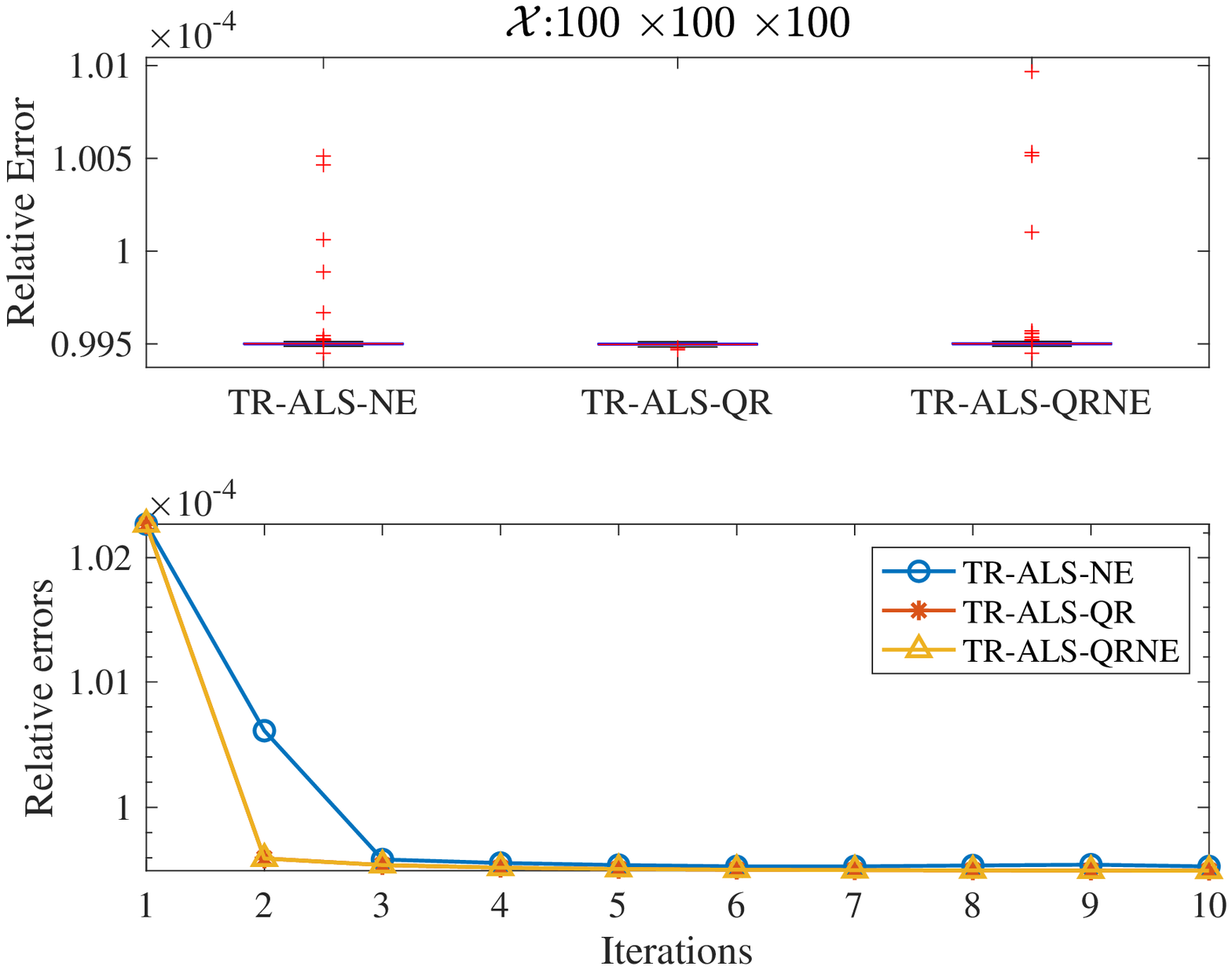}} 
	\quad
	\subfloat[$\eta = 10^{-7}$, $\gamma = 1-10^{-4}$]{\includegraphics[scale=0.207]{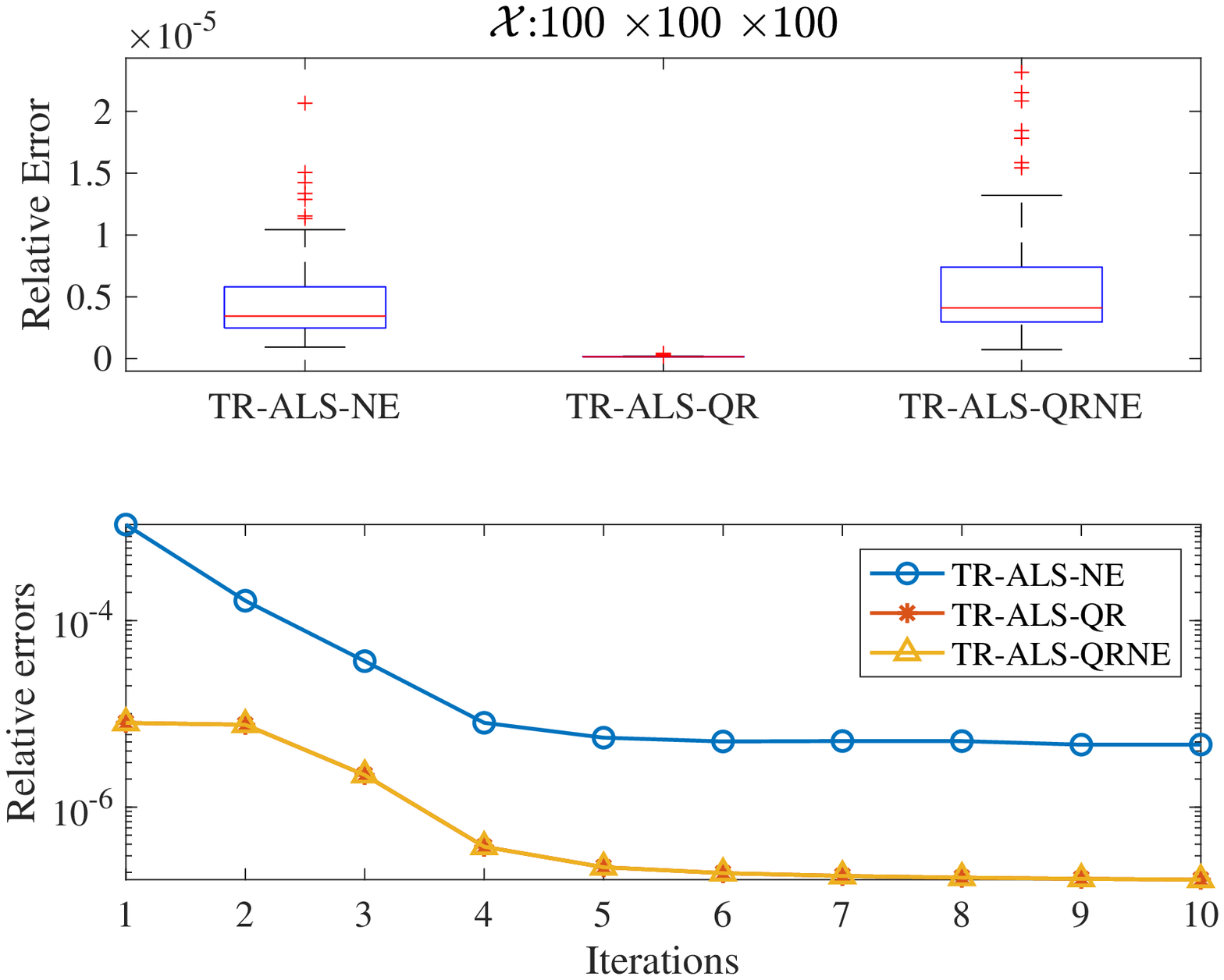}} 
	\subfloat[$\eta = 10^{-7}$, $\gamma = 1-10^{-7}$]{\includegraphics[scale=0.207]{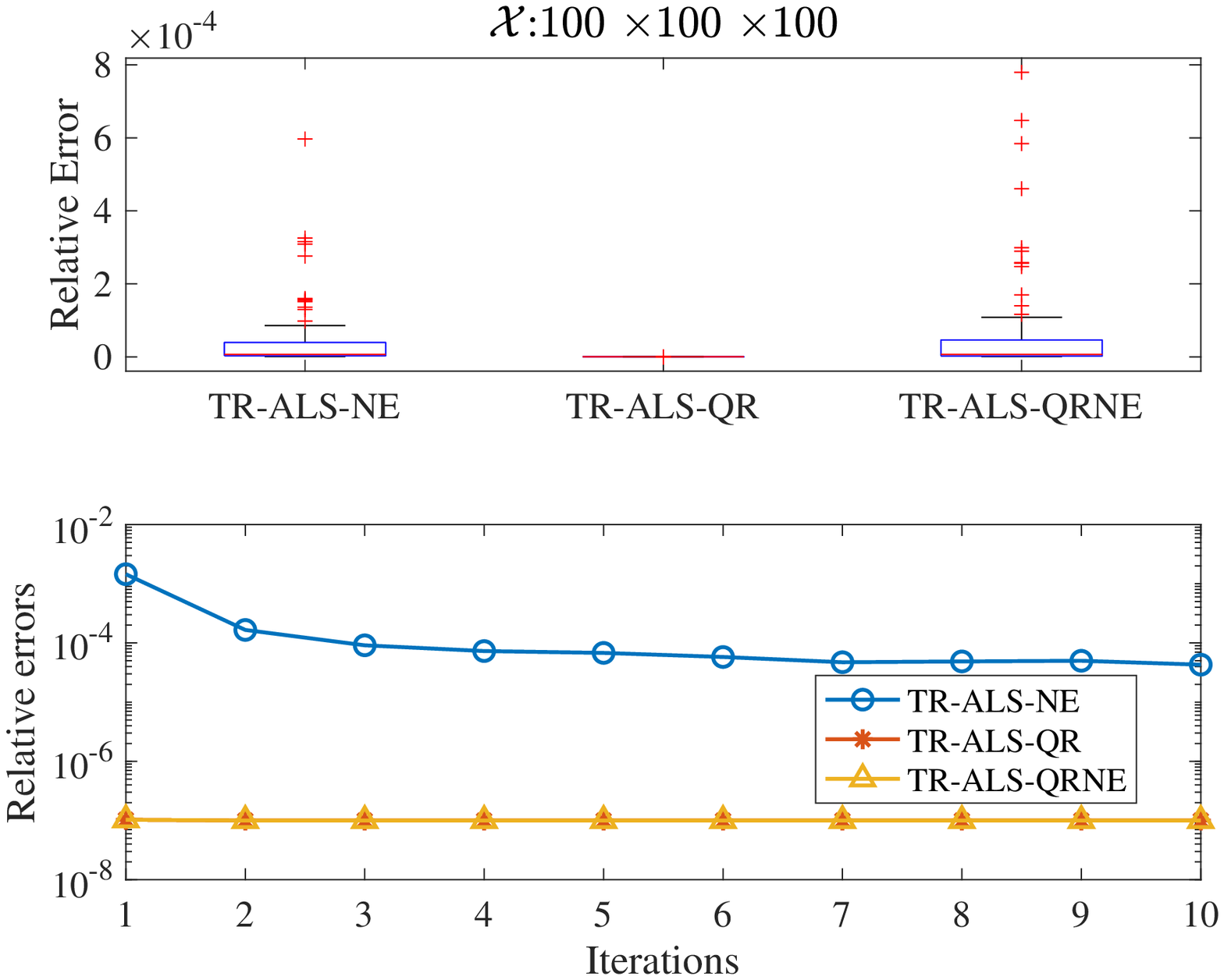}} 
	\subfloat[$\eta = 10^{-7}$, $\gamma = 1-10^{-10}$]{\includegraphics[scale=0.207]{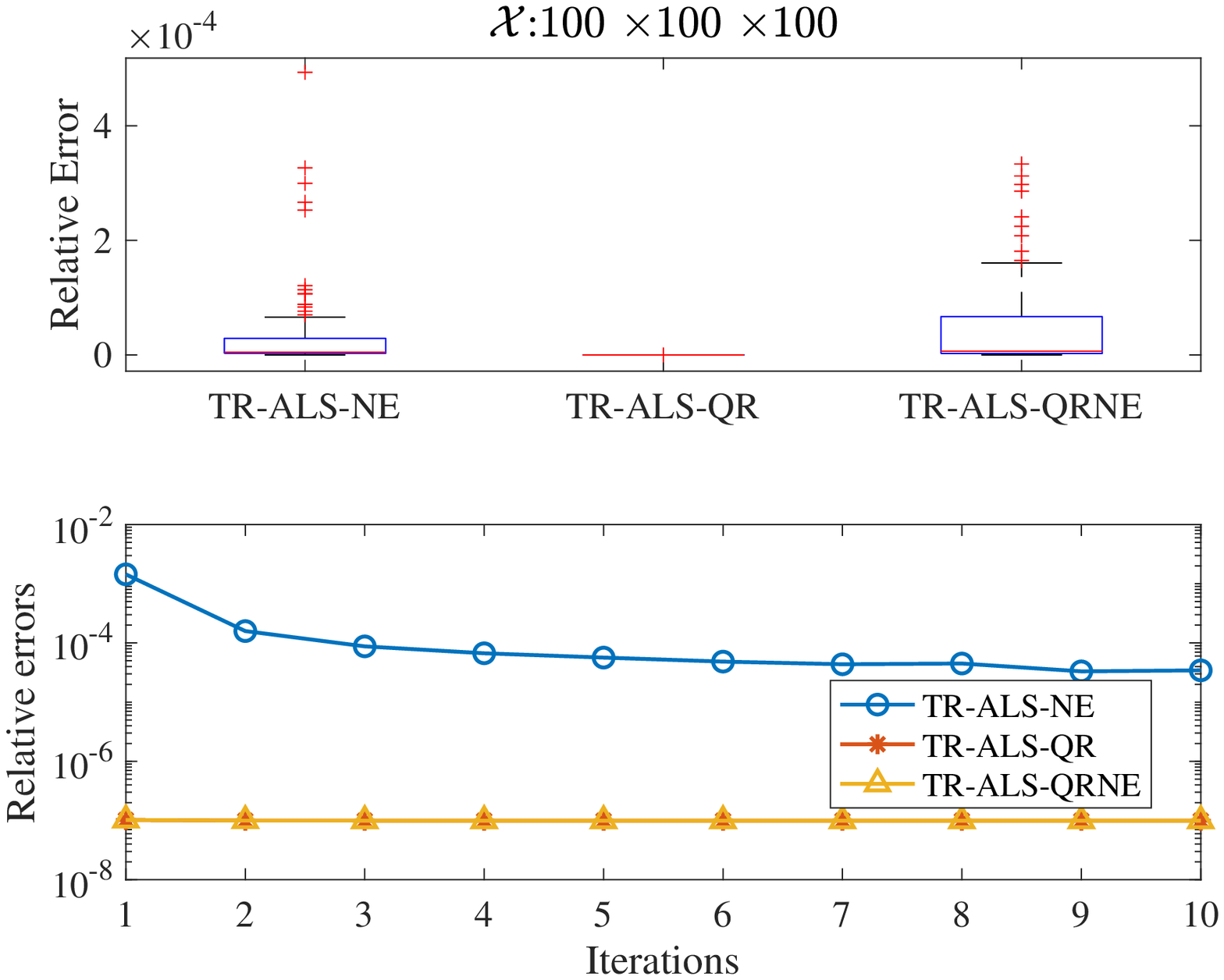}} 
	\quad
	\subfloat[$\eta = 10^{-10}$, $\gamma = 1-10^{-4}$]{\includegraphics[scale=0.207]{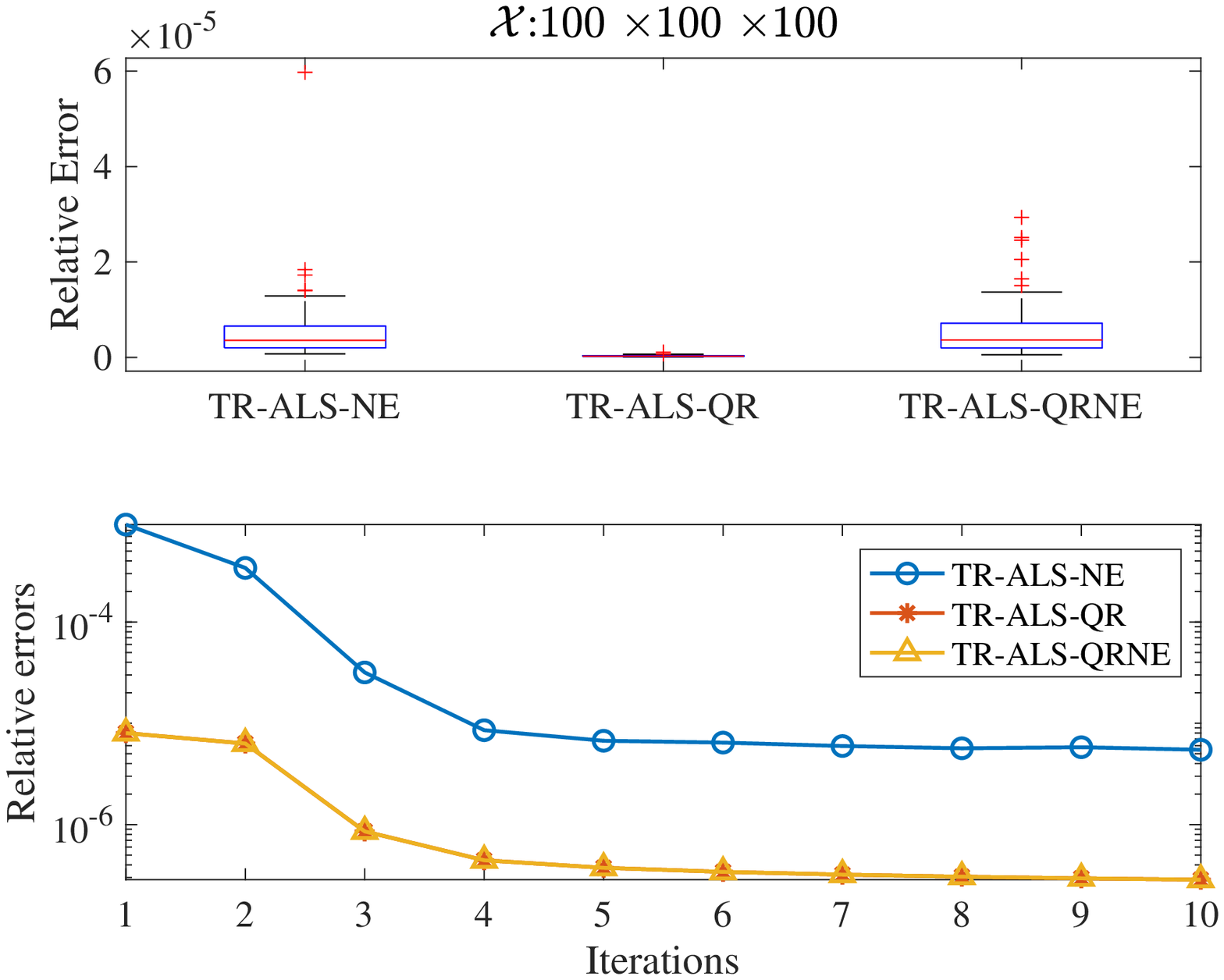}} 
	\subfloat[$\eta = 10^{-10}$, $\gamma = 1-10^{-7}$]{\includegraphics[scale=0.207]{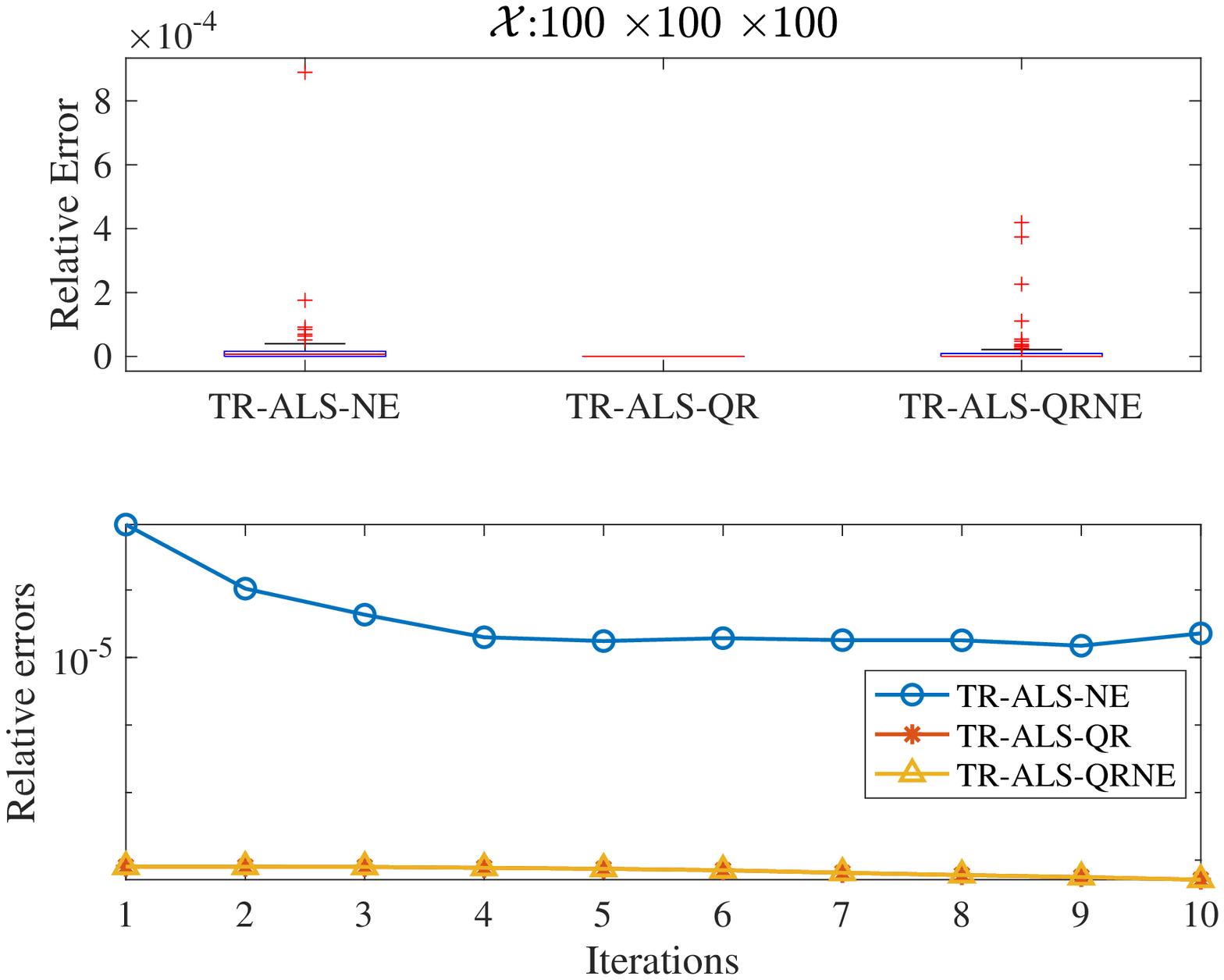}} 
	\subfloat[$\eta = 10^{-10}$, $\gamma = 1-10^{-10}$]{\includegraphics[scale=0.207]{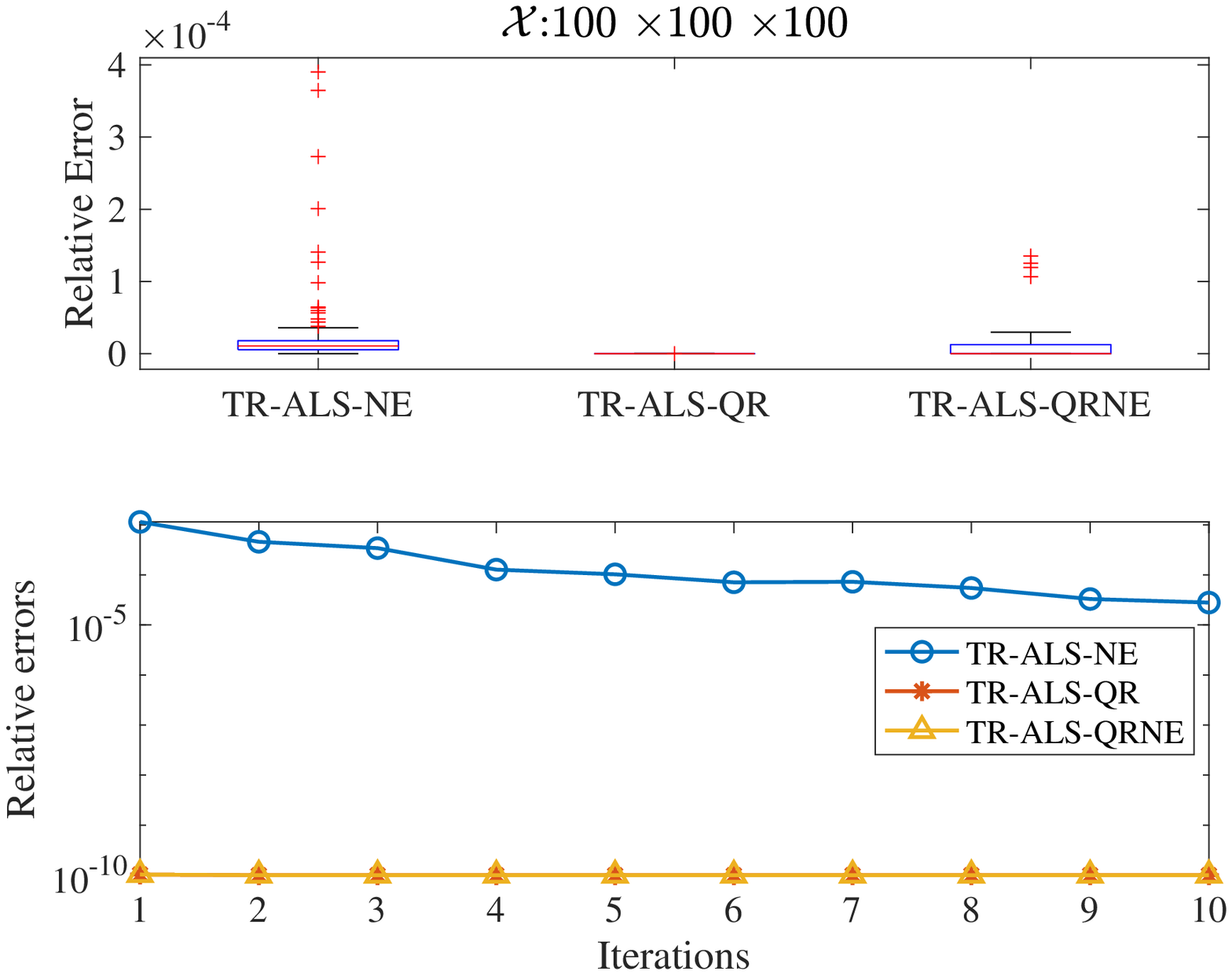}} 
	\caption{Boxplots and line chart of relative errors for TR-ALS-NE, TR-ALS-QR, and TR-ALS-QRNE on a $100 \times 100 \times 100$ synthetic tensor of rank 5 with three different levels of collinearity for the true TR-cores and three different levels of Gaussian noise added. Each algorithm is run 100 trails.}
	\label{fig:b2_collinear}
\end{figure}

\paragraph{Experiment B-\uppercase\expandafter{\romannumeral3}.}
In this experiment, the $5 \times 100 \times 5$ TR-cores are also constructed by reshaping some special $100 \times 25$ random matrices. The difference is that the entries of matrices are drawn from the multivariate $t$-distribution. Specifically,
we use the Matlab function \textsc{Mvtrnd}($\mat{C}$, $d$, 100) to implement this process, where $\mat{C} \in \bb{R}^{25 \times 25}$ is a correlation matrix whose ($i$, $j$)-th element is equal to $\theta^{|i-j|}$ with $\theta$ describing the correlation level, and $d$ is the degrees of freedom. We always set $d=1$ in our specific experiment.

We test all combinations of three different noise levels $10^{-4}$, $10^{-7}$, and $10^{-10}$, and three different correlation levels $1-10^{-1}$, $1-10^{-4}$, and $1-10^{-7}$, and run 100 trials of each algorithm for each configuration. 
\Cref{fig:b3_colstu} reports the numerical results, which are similar to the ones for Experiment B-\uppercase\expandafter{\romannumeral2}. That is, for well-conditioned cases, e.g., the one of $\eta = 10^{-4}$ and $\theta = 1-10^{-1}$, all the algorithms have similar performance.  
However, for ill-conditioned cases, e.g., the combinations of $\eta = 10^{-7}, 10^{-10}$ and $\theta = 1-10^{-4}, 1-10^{-7}$, TR-ALS-QR and TR-ALS-QRNE are still robust, that is, they obtain lower relative errors with little variation compared with TR-ALS-NE.

\begin{figure}[htbp] 
	\centering 
	\subfloat[$\eta = 10^{-4}$, $\theta = 1-10^{-1}$]{\includegraphics[scale=0.207]{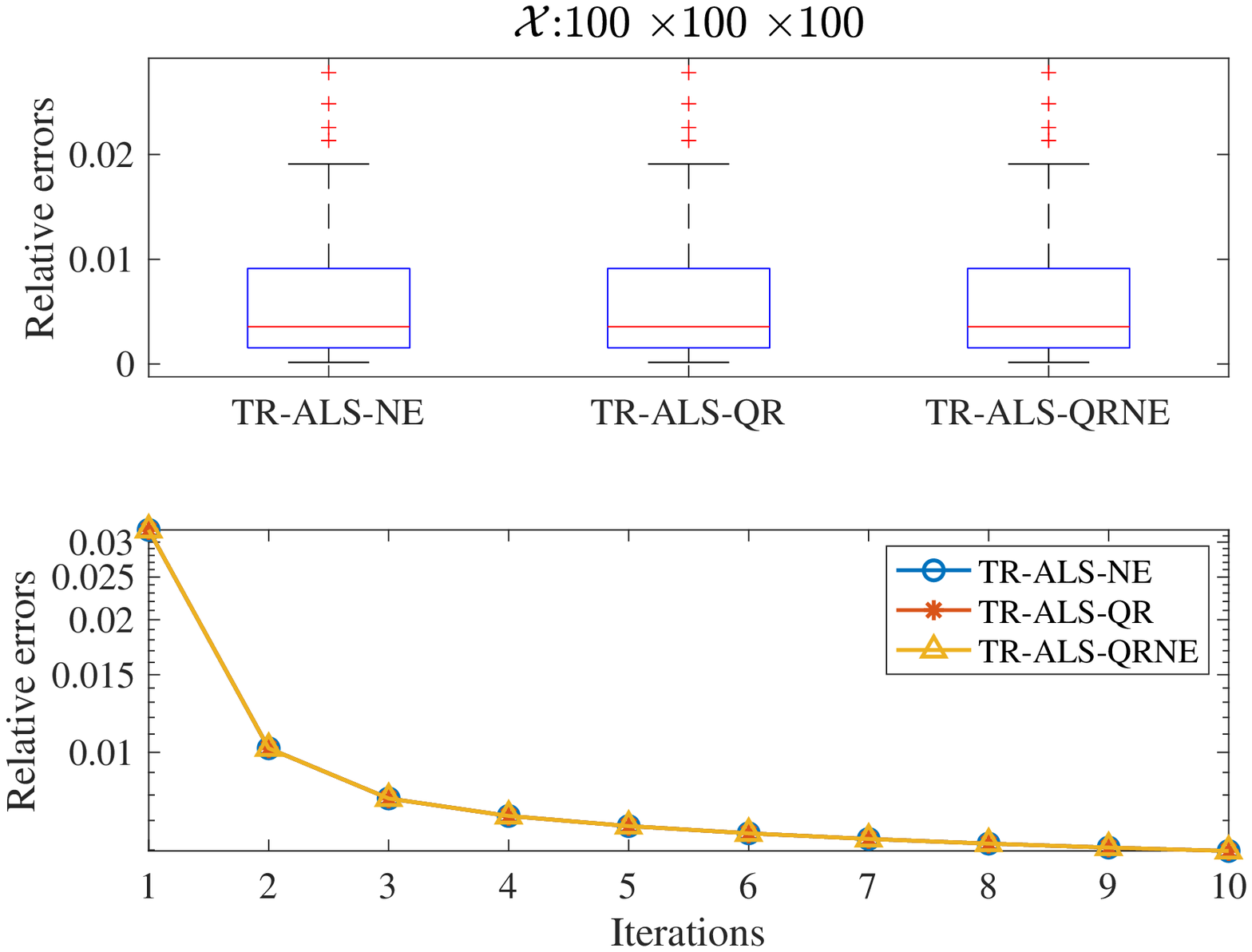}} 
	\subfloat[$\eta = 10^{-4}$, $\theta = 1-10^{-4}$]{\includegraphics[scale=0.207]{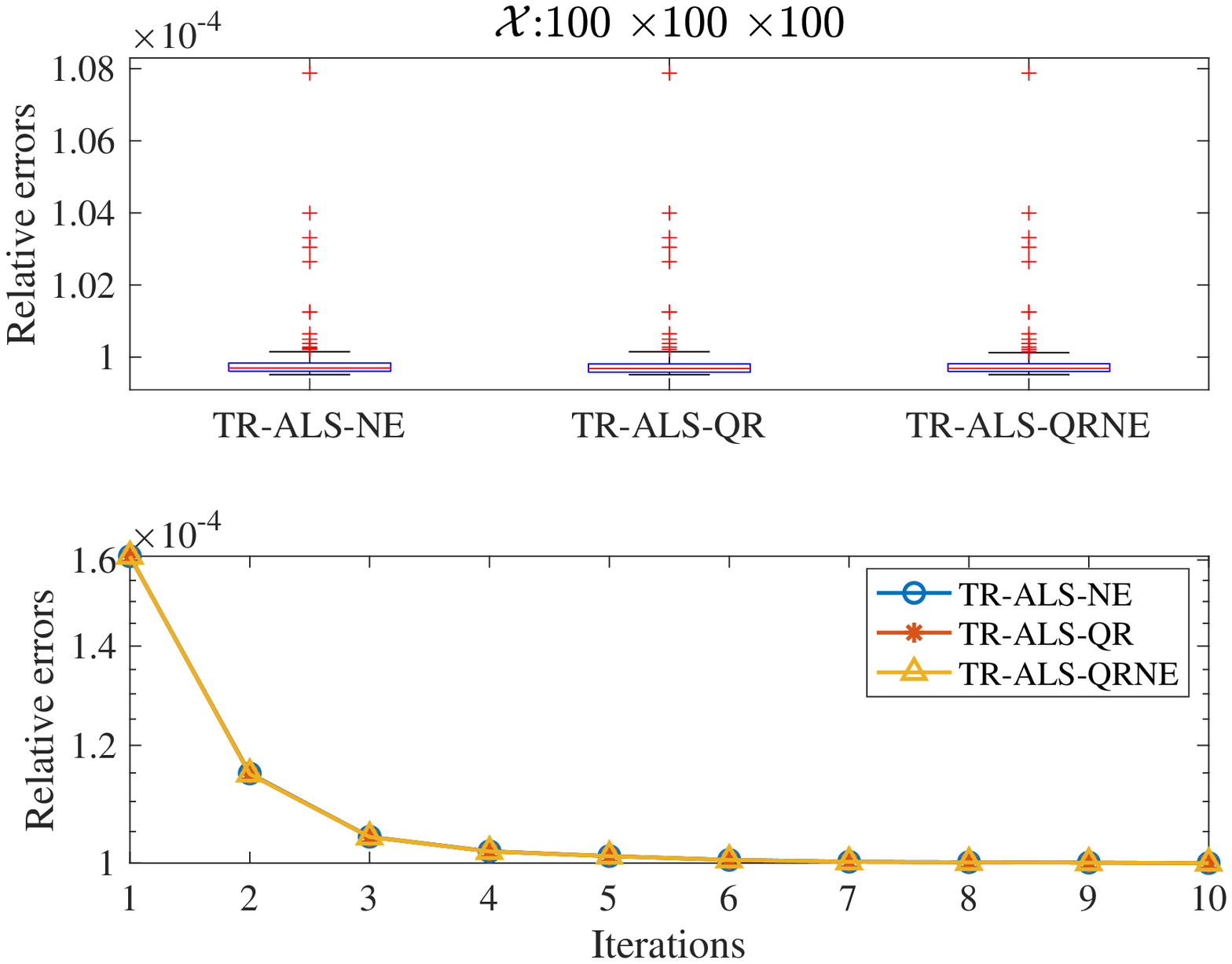}} 
	\subfloat[$\eta = 10^{-4}$, $\theta = 1-10^{-7}$]{\includegraphics[scale=0.207]{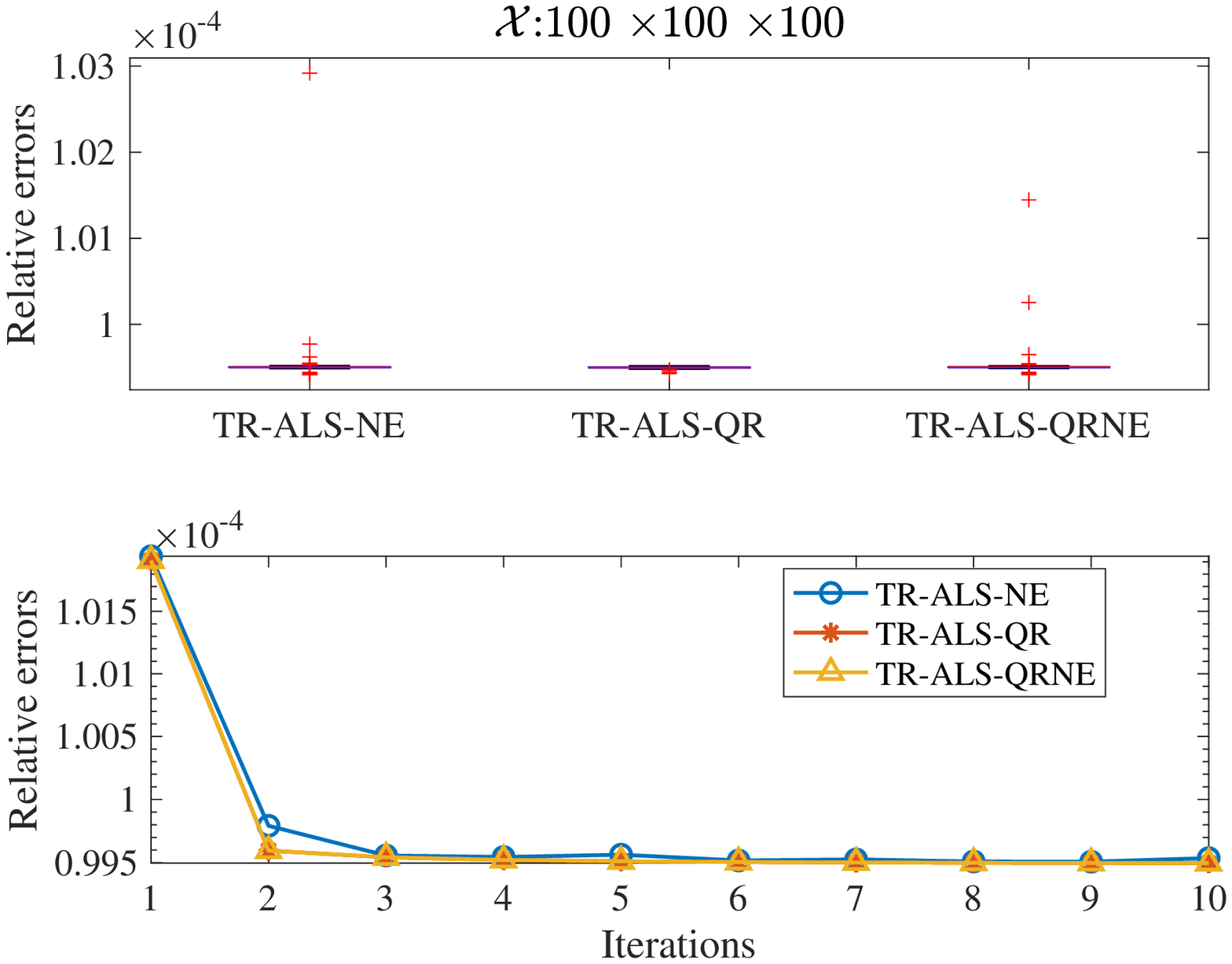}} 
	\quad
	\subfloat[$\eta = 10^{-7}$, $\theta = 1-10^{-1}$]{\includegraphics[scale=0.207]{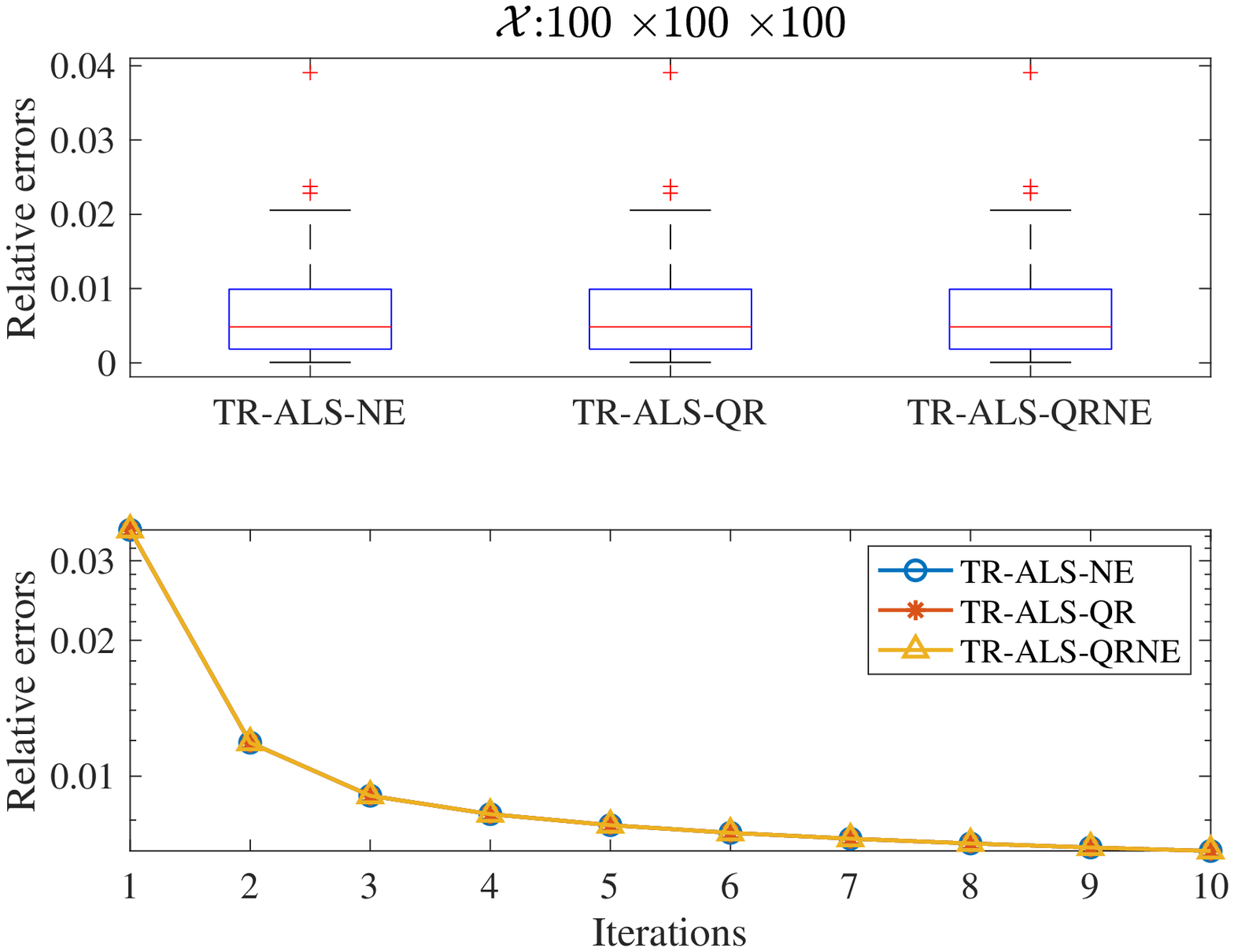}} 
	\subfloat[$\eta = 10^{-7}$, $\theta = 1-10^{-4}$]{\includegraphics[scale=0.207]{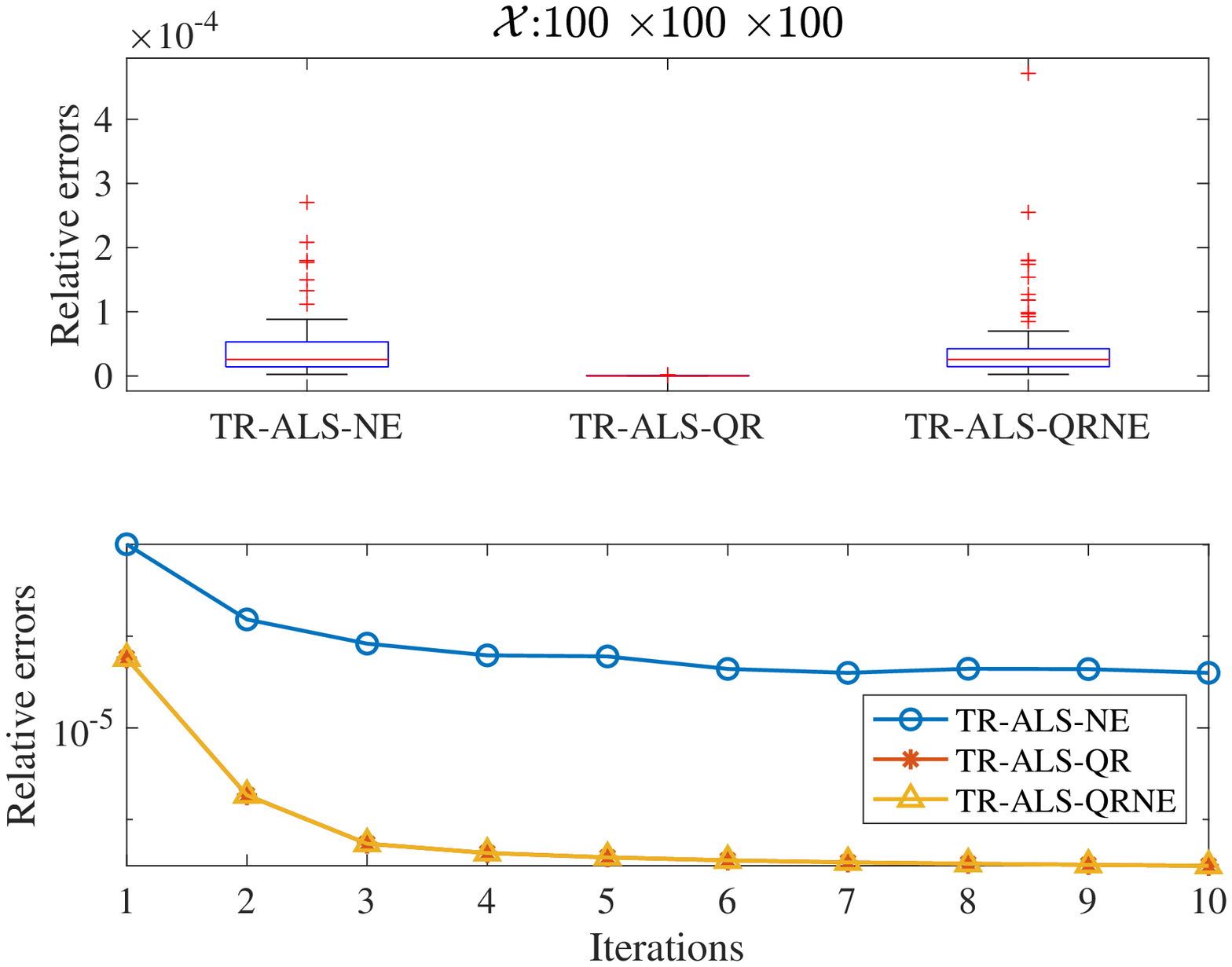}} 
	\subfloat[$\eta = 10^{-7}$, $\theta = 1-10^{-7}$]{\includegraphics[scale=0.207]{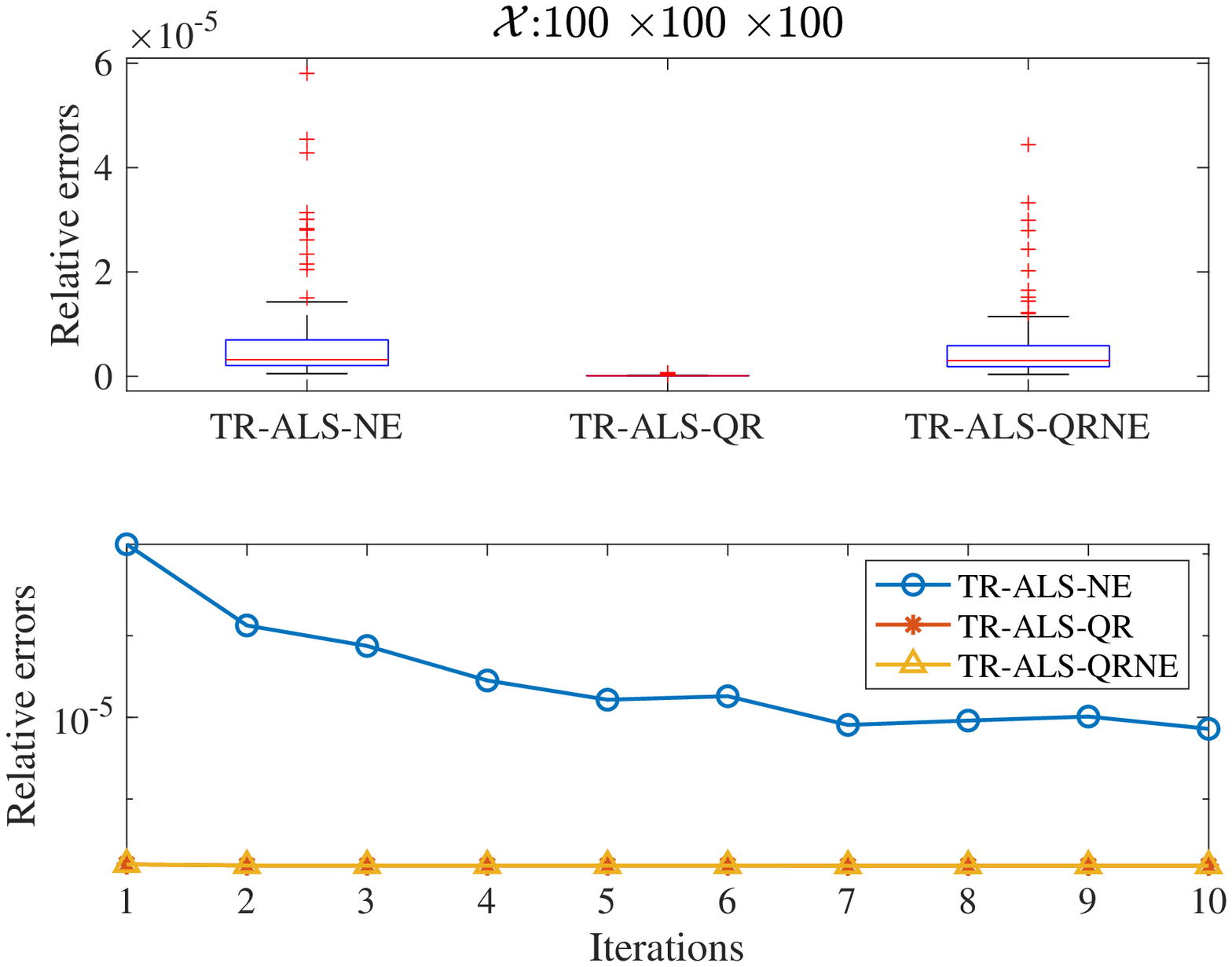}} 
	\quad
	\subfloat[$\eta = 10^{-10}$, $\theta = 1-10^{-1}$]{\includegraphics[scale=0.207]{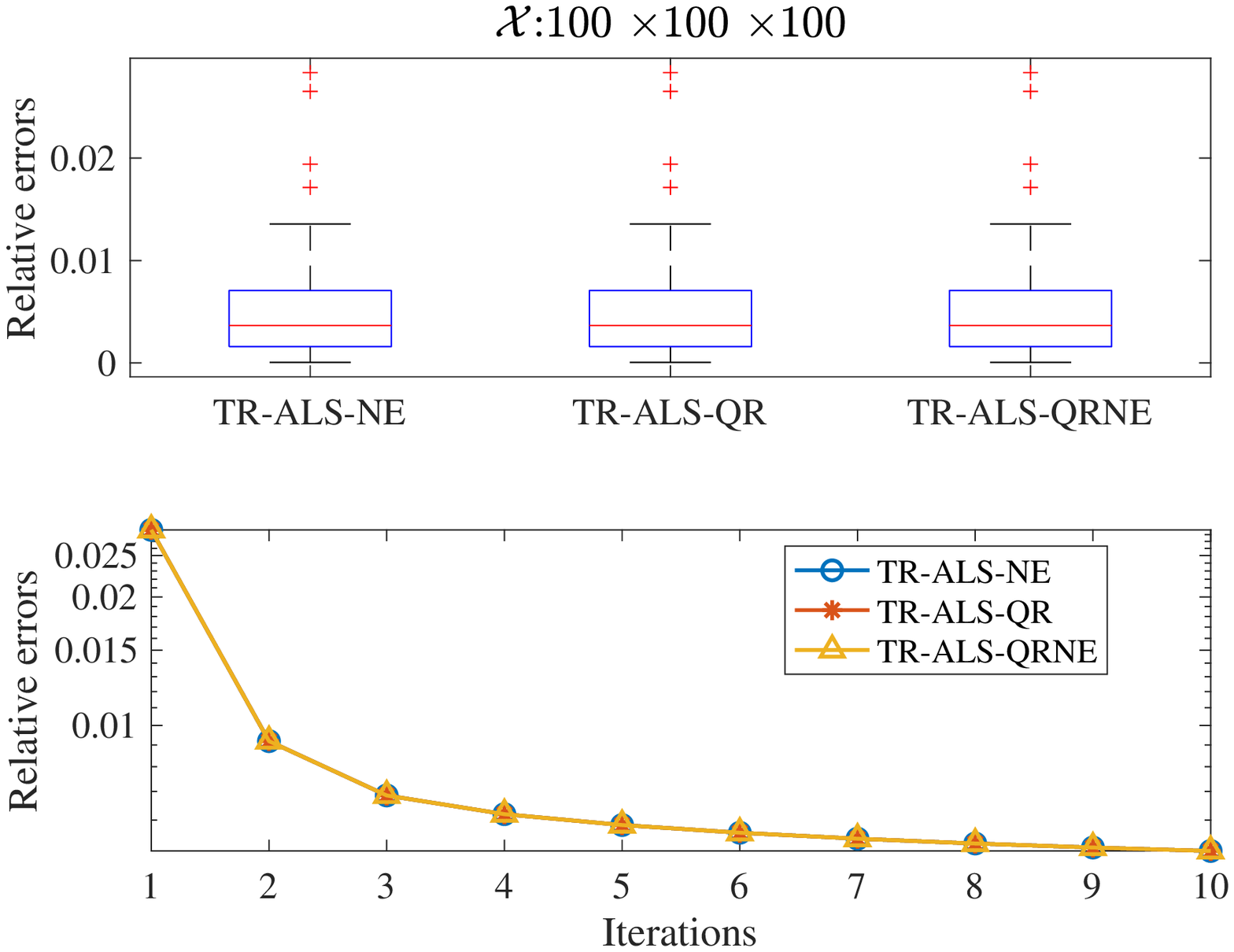}} 
	\subfloat[$\eta = 10^{-10}$, $\theta = 1-10^{-4}$]{\includegraphics[scale=0.207]{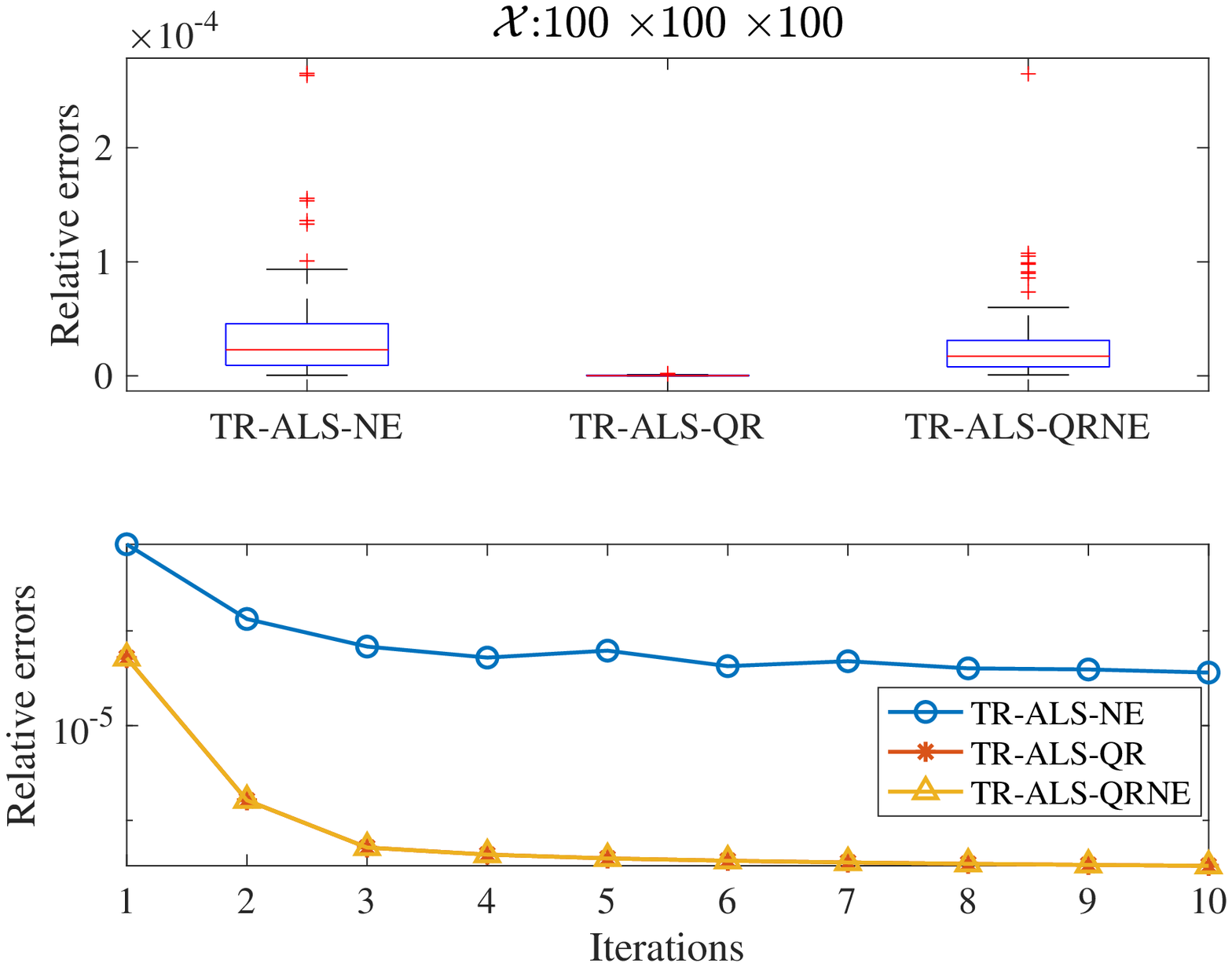}} 
	\subfloat[$\eta = 10^{-10}$, $\theta = 1-10^{-7}$]{\includegraphics[scale=0.207]{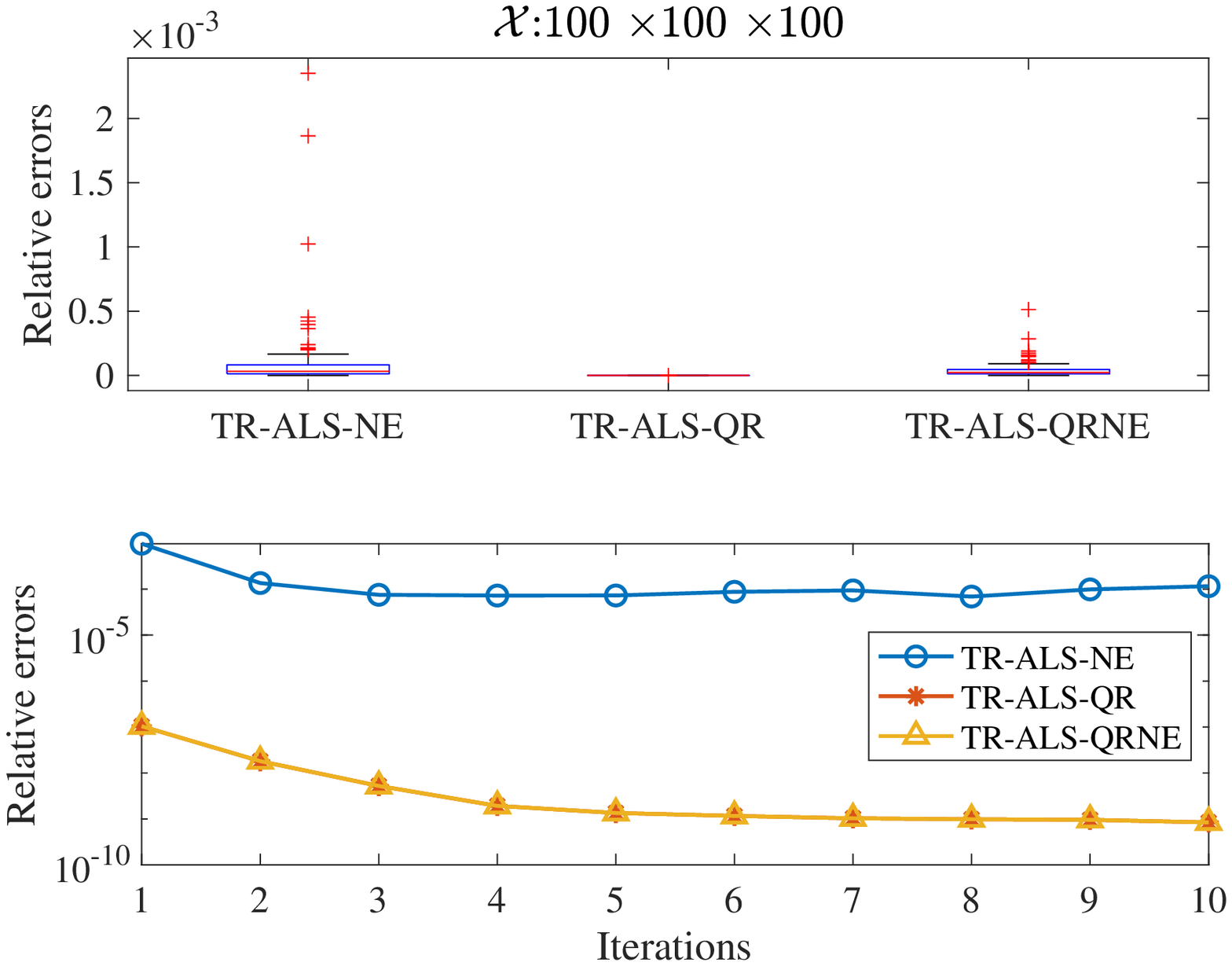}} 
	\caption{Boxplots and line chart of relative errors for TR-ALS-NE, TR-ALS-QR, and TR-ALS-QRNE on a $100 \times 100 \times 100$ synthetic tensor of rank 5 with three different kinds of the correlation matrices for the true TR-cores and three different levels of Gaussian noise added. Each algorithm is run 100 trails.}
	\label{fig:b3_colstu}
\end{figure}

%%==================
\subsection{Performance on image and video data}

The brief information of three real image and video datasets is listed in \Cref{tab:real_data}.  
More specifically, \textbf{DC Mall} is a 3rd-order tensor containing the hyperspectral image, whose first two orders are the image height and width, and the third one is the number of spectral bands. 
\textbf{Park Bench} and \textbf{Tabby Cat} are 3rd-order tensors representing grayscale videos of a man sitting on a park bench and a tabby cat, respectively. The first two orders of them are the height and width of frames, and the third one is the number of frames. In addition, we also provide the links of the above data in footnotes.

\begin{table}[htbp]
	\centering
	\caption{Size and type of real datasets.} 
	\label{tab:real_data}
	\begin{tabular}{lll}
		\toprule
		Dataset & Size & Type \\
		\midrule
		DC Mall  \tablefootnote{\url{https://engineering.purdue.edu/~biehl/MultiSpec/}}    & $1280 \times 307 \times 191$   & Hyperspectral image \\
		Park Bench \tablefootnote{\url{https://www.pexels.com/video/man-sitting-on-a-bench-853751}}  & $1080 \times 1920 \times 364$ & Video \\
		Tabby Cat  \tablefootnote{\url{https://www.pexels.com/video/video-of-a-tabby-cat-854982/}}  & $720 \times 1280 \times 286$  & Video \\
		\bottomrule
	\end{tabular}
\end{table}

\paragraph{Lower-order tensors}
We implement our methods and TR-ALS on the data in \Cref{tab:real_data} directly with different target rank $R$. The numerical results are summarized in \Cref{tab:result_real_r3,tab:result_real_r10}, respectively. From these two tables, we can see that our three algorithms can achieve the same errors as TR-ALS but take much less time, and TR-ALS-QRNE is the fastest method. 
In addition, we can also find that when running a same algorithm with different target ranks, different errors are obtained. This is mainly because the closer the target rank is to the true rank, the more accurate the result of the decomposition is. On the other hand, the larger the target rank is, the longer time it takes to run the algorithm. 

\begin{table}[htbp]
	\centering
    % \sisetup{table-alignment-mode=none,round-mode=places,table-number-alignment=right,table-text-alignment=right} % only overleaf
    \sisetup{round-mode=places,table-number-alignment=right,table-text-alignment=right}
	\caption{Decompositions for real datasets with target rank $R=3$.} 
	\label{tab:result_real_r3}
	\begin{tabular}{
			l 
			S[round-precision=3, table-figures-decimal=2, table-figures-integer=1]
			S[round-precision=1, table-figures-decimal=1, table-figures-integer=3]
			S[round-precision=3, table-figures-decimal=2, table-figures-integer=1]
			S[round-precision=1, table-figures-decimal=1, table-figures-integer=3]
			S[round-precision=3, table-figures-decimal=2, table-figures-integer=1]
			S[round-precision=1, table-figures-decimal=1, table-figures-integer=3]
		} 
		\toprule
		& \multicolumn{2}{c}{DC Mall} & \multicolumn{2}{c}{Park Bench} & \multicolumn{2}{c}{Tabby Cat} \\
		\cmidrule(lr){2-3}
		\cmidrule(lr){4-5}
		\cmidrule(lr){6-7}
		Method 		 & {Error} & {Time (s)} & {Error} & {Time (s)}  & {Error} & {Time (s)} \\
		\midrule
		TR-ALS 	     & 0.331144154752868 & 54.6150130400000   & 0.183328486150838 & 519.209623170000   & 0.189027280797860 & 188.651279860000 \\
		\midrule
		TR-ALS-NE    & 0.331144154752868 & 5.62500080000000   & 0.183328486150837 & 47.9150986600000   & 0.189027280797860 & 19.7737110600000 \\
		TR-ALS-QR    & 0.331144154752868 & 2.75662358000000   & 0.183328486150838 & 33.2959255000000   & 0.189027280797860 & 12.1747424200000 \\
		TR-ALS-QRNE  & 0.331144154752868 & 2.75732526000000   & 0.183328486150837 & 33.3094532100000   & 0.189027280797860 & 12.1206367100000 \\
		\bottomrule
	\end{tabular}
\end{table}	

\begin{table}[htbp]
	\centering
 	% \sisetup{table-alignment-mode=none,round-mode=places,table-number-alignment=right,table-text-alignment=right} % only overleaf
    \sisetup{round-mode=places,table-number-alignment=right,table-text-alignment=right}
	\caption{Decompositions for real datasets with target rank $R=10$.} 
	\label{tab:result_real_r10}
	\resizebox{1\linewidth}{!}{
	\begin{tabular}{
			l
			S[round-precision=3, table-figures-decimal=2, table-figures-integer=1]
			S[round-precision=1, table-figures-decimal=1, table-figures-integer=3]
			S[round-precision=3, table-figures-decimal=2, table-figures-integer=1]
			S[round-precision=1, table-figures-decimal=1, table-figures-integer=3]
			S[round-precision=3, table-figures-decimal=2, table-figures-integer=1]
			S[round-precision=1, table-figures-decimal=1, table-figures-integer=3]
		}  
		\toprule
		& \multicolumn{2}{c}{DC Mall} & \multicolumn{2}{c}{Park Bench} & \multicolumn{2}{c}{Tabby Cat} \\
		\cmidrule(lr){2-3}
		\cmidrule(lr){4-5}
		\cmidrule(lr){6-7}
		Method 		 & {Error} & {Time (s)} & {Error} & {Time (s)}  & {Error} & {Time (s)} \\
		\midrule
		TR-ALS 	     & 0.158708353596765 & 435.494658880000   & 0.0726437008272118 & 4114.35873270000   & 0.133891697950235 & 1475.19881578000 \\
		\midrule
		TR-ALS-NE    & 0.158708353596779 & 10.8201581300000   & 0.0726437008272114 & 75.7809652600000   & 0.133891697950235 & 31.2755252700000 \\
		TR-ALS-QR    & 0.158708353596765 & 8.54181705000000   & 0.0726437008272118 & 64.1277507200000   & 0.133891697950235 & 25.7211072400000 \\
		TR-ALS-QRNE  & 0.158708353596750 & 8.04869038000000   & 0.0726437008271361 & 63.2440317100000   & 0.133891697950234 & 25.3862390800000 \\
		\bottomrule
	\end{tabular}
}
\end{table}

\paragraph{Higher-order tensors}
The so-called higher-order tensors listed in \Cref{tab:real_data_reshape} are truncated and reshaped from the data in \Cref{tab:real_data}.  
For example, DC Mall is first truncated to size $1280 \times 306 \times 190$ and then reshaped into a $32 \times 40 \times 18 \times 17 \times 10 \times 19$ tensor.  
The numerical results of various algorithms with target rank $R=3$ on these higher-order tensors are summarized in \Cref{tab:result_real_reshape}, where we see a similar finding to lower-order cases, i.e., our algorithms have much better performance compared with TR-ALS. 
In addition, comparing \Cref{tab:result_real_reshape} with \Cref{tab:result_real_r3} shows that although the same target rank is used for the same dataset, the decompositions obtained from the reshaped tensors are less accurate than those obtained from the original tensors and the former also takes a longer time than the latter. This is mainly because the reshaped tensors change the structural information of the original data. 

\begin{table}[htbp]
	\centering
	\caption{Size of truncated and reshaped real datasets in \Cref{tab:real_data}.} 
	\label{tab:real_data_reshape}
	\begin{tabular}{ll}
		\toprule
		Dataset & Size \\ 
		\midrule
		DC Mall (reshaped)     & $32 \times 40 \times 18 \times 17 \times 10 \times 19$ \\
		Park Bench (reshaped)  & $24 \times 45 \times 32 \times 60 \times 28 \times 13$ \\
		Tabby Cat (reshaped)   & $16 \times 45 \times 32 \times 40 \times 13 \times 22$ \\
		\bottomrule
	\end{tabular}
\end{table}

\begin{table}[htbp]
	\centering
	% \sisetup{table-alignment-mode=none,round-mode=places,table-number-alignment=right,table-text-alignment=right} % only overleaf
	\sisetup{round-mode=places,table-number-alignment=right,table-text-alignment=right}
	\caption{Decompositions for real datasets with target rank $R=3$.} 
	\label{tab:result_real_reshape}
	\resizebox{1\linewidth}{!}{
	\begin{tabular}{
			l
			S[round-precision=3, table-figures-decimal=2, table-figures-integer=1]
			S[round-precision=1, table-figures-decimal=1, table-figures-integer=3]
			S[round-precision=3, table-figures-decimal=2, table-figures-integer=1]
			S[round-precision=1, table-figures-decimal=1, table-figures-integer=3]
			S[round-precision=3, table-figures-decimal=2, table-figures-integer=1]
			S[round-precision=1, table-figures-decimal=1, table-figures-integer=3]
		}  
		\toprule
		& \multicolumn{2}{c}{DC Mall (reshaped)} & \multicolumn{2}{c}{Park Bench (reshaped)} & \multicolumn{2}{c}{Tabby Cat (reshaped)} \\
		\cmidrule(lr){2-3}
		\cmidrule(lr){4-5}
		\cmidrule(lr){6-7}
		Method 		 & {Error} & {Time (s)} & {Error} & {Time (s)}  & {Error} & {Time (s)} \\
		\midrule
		TR-ALS 		 & 0.383668315557150 & 114.892324890000   & 0.213643362405975 & 914.651634300000   & 0.196877559789126 & 351.144208150000 \\
		\midrule
		TR-ALS-NE    & 0.383668315557147 & 22.1979294400000   & 0.213643362405975 & 189.036449460000   & 0.196877559789130 & 74.2533063000000 \\
		TR-ALS-QR    & 0.383668315557150 & 13.3313218100000   & 0.213643362405975 & 107.425923470000   & 0.196877559789126 & 53.7369941600000 \\
		TR-ALS-QRNE  & 0.383668315557148 & 12.9254576200000   & 0.213643362405976 & 106.966414100000   & 0.196877559789122 & 53.3766223500000 \\
		\bottomrule
	\end{tabular}
}
\end{table}

%%=================%%
%%   Conclusions   %%
%%=================%%
\section{Concluding Remarks}
In this paper, we propose three practical ALS-based algorithms for TR decomposition, i.e., TR-ALS-NE, TR-ALS-QR, and TR-ALS-QRNE. They can make full use of the structure of the coefficient matrices of the TR-ALS subproblems. To achieve this, we present a new property of the subchain product of tensors and extend the QR factorization of the matrix to the 3rd-order tensor. Numerical results show that TR-ALS-NE can be much faster than the regular TR-ALS, and the QR-based methods are in turn more stable than TR-ALS-NE.

There are several potential performance improvements to pursue in future work.  
One is to exploit the structure of the subchain product of some 3rd-order tensors whose mode-2 unfolding matrices are upper triangular. 
Another one is to speed up MTTSP, whose counterpart in CP decomposition is MTTKRP which has been investigated extensively.  
Moreover, it is also interesting to combine our methods with randomized techniques to further reduce computational costs.

%\begin{acknowledgements}
%If you'd like to thank anyone, place your comments here
%and remove the percent signs.
%\end{acknowledgements}

% Authors must disclose all relationships or interests that 
% could have direct or potential influence or impart bias on 
% the work: 
%
\section*{Acknowledgements}
The work is supported by the National Natural Science Foundation of China (no. 11671060,
11771099) and the Natural Science Foundation of Chongqing, China (no. cstc2019jcyj-msxmX0267).

% BibTeX users please use one of
%\bibliographystyle{spbasic}      % basic style, author-year citations
%\bibliographystyle{spmpsci}      % mathematics and physical sciences
%\bibliographystyle{spphys}       % APS-like style for physics
%\bibliography{}   % name your BibTeX data base
\bibliographystyle{spmpsci}
\bibliography{jsc_ref}

\begin{thebibliography}{10}
\providecommand{\url}[1]{{#1}}
\providecommand{\urlprefix}{URL }
\expandafter\ifx\csname urlstyle\endcsname\relax
  \providecommand{\doi}[1]{DOI~\discretionary{}{}{}#1}\else
  \providecommand{\doi}{DOI~\discretionary{}{}{}\begingroup
  \urlstyle{rm}\Url}\fi

\bibitem{affleck1988ValenceBond}
Affleck, I., Kennedy, T., Lieb, E.H., Tasaki, H.: Valence bond ground states in
  isotropic quantum antiferromagnets.
\newblock Comm. Math. Phys. \textbf{115}(3), 477--528 (1988).
\newblock \doi{10.1007/BF01218021}

\bibitem{ahmadi-asl2020RandomizedAlgorithms}
Ahmadi-Asl, S., Cichocki, A., Phan, A.H., Asante-Mensah, M.G., Ghazani, M.M.,
  Tanaka, T., Oseledets, I.V.: Randomized algorithms for fast computation of
  low rank tensor ring model.
\newblock Mach. Learn.: Sci. Technol. \textbf{2}(1), 011001 (2020).
\newblock \doi{10.1088/2632-2153/abad87}

\bibitem{kolda2006TensorToolbox}
Bader, B.W., Kolda, T.G., et~al.: Tensor toolbox for matlab (2021).
\newblock \urlprefix\url{https://www.tensortoolbox.org}.
\newblock Version 3.2.1

\bibitem{carroll1970AnalysisIndividual}
Carroll, J.D., Chang, J.J.: Analysis of individual differences in
  multidimensional scaling via an n-way generalization of ``{Eckart-Young}''
  decomposition.
\newblock Psychometrika \textbf{35}(3), 283--319 (1970).
\newblock \doi{10.1007/BF02310791}

\bibitem{chen2020TensorRing}
Chen, Z., Li, Y., Lu, J.: Tensor ring decomposition: Optimization landscape and
  one-loop convergence of alternating least squares.
\newblock SIAM J. Matrix Anal. Appl. \textbf{41}(3), 1416--1442 (2020).
\newblock \doi{10.1137/19M1270689}

\bibitem{espig2015ConvergenceAlternatinga}
Espig, M., Hackbusch, W., Khachatryan, A.: On the convergence of alternating
  least squares optimisation in tensor format representations.
\newblock arXiv preprint arXiv:1506.00062  (2015)

\bibitem{golub2013MatrixComputations}
Golub, G.H., Loan, C.F.V.: Matrix Computations.
\newblock The Johns Hopkins University Press, Baltimore, Maryland (2013)

\bibitem{harshman1970FoundationsPARAFAC}
Harshman, R.A.: Foundations of the {PARAFAC} procedure : Models and conditions
  for an.
\newblock UCLA Working Papers in Phonetics \textbf{16}, 1--84 (1970)

\bibitem{khoromskij2011DlogQuantics}
Khoromskij, B.N.: {$O(d \text{log}N)$}-quantics approximation of {$N$-$d$}
  tensors in high-dimensional numerical modeling.
\newblock Constr. Approx. \textbf{34}(2), 257--280 (2011).
\newblock \doi{10.1007/s00365-011-9131-1}

\bibitem{kolda2009TensorDecompositions}
Kolda, T.G., Bader, B.W.: Tensor decompositions and applications.
\newblock SIAM Rev. \textbf{51}(3), 455--500 (2009).
\newblock \doi{10.1137/07070111X}

\bibitem{malik2022MoreEfficient}
Malik, O.A.: More efficient sampling for tensor decomposition with worst-case
  guarantees.
\newblock In: Proceedings of the 39th International Conference on Machine
  Learning, vol. 162, pp. 14887--14917. PMLR, Virtual Event (2022)

\bibitem{malik2021SamplingBasedMethod}
Malik, O.A., Becker, S.: A sampling-based method for tensor ring decomposition.
\newblock In: Proceedings of the 38th International Conference on Machine
  Learning, vol. 139, pp. 7400--7411. PMLR, Virtual Event (2021)

\bibitem{mickelin2020AlgorithmsComputing}
Mickelin, O., Karaman, S.: On algorithms for and computing with the tensor ring
  decomposition.
\newblock Numer. Linear Algebra Appl. \textbf{27}(3), e2289 (2020).
\newblock \doi{10.1002/nla.2289}

\bibitem{minster2021CPDecomposition}
Minster, R., Viviano, I., Liu, X., Ballard, G.: {CP} decomposition for tensors
  via alternating least squares with {QR} decomposition.
\newblock arXiv preprint arXiv:2112.10855  (2021)

\bibitem{oseledets2011TensorTrainDecomposition}
Oseledets, I.V.: Tensor-train decomposition.
\newblock SIAM J. Sci. Comput. \textbf{33}(5), 2295--2317 (2011).
\newblock \doi{10.1137/090752286}

\bibitem{perez-garcia2007MatrixProduct}
Perez-Garcia, D., Verstraete, F., Wolf, M.M., Cirac, J.I.: Matrix product state
  representations.
\newblock Quantum Inform. Comput. \textbf{7}(5-6), 401--430 (2007)

\bibitem{tomasi2006ComparisonAlgorithms}
Tomasi, G., Bro, R.: A comparison of algorithms for fitting the {{PARAFAC}}
  model.
\newblock Comput. Statist. Data Anal. \textbf{50}(7), 1700--1734 (2006).
\newblock \doi{10.1016/j.csda.2004.11.013}

\bibitem{tucker1966MathematicalNotes}
Tucker, L.R.: Some mathematical notes on three-mode factor analysis.
\newblock Psychometrika \textbf{31}(3), 279--311 (1966).
\newblock \doi{10.1007/BF02289464}

\bibitem{yu2022PracticalSketchingBased}
Yu, Y., Li, H.: Practical sketching-based randomized tensor ring decomposition.
\newblock arXiv preprint arXiv:2209.05647  (2022)

\bibitem{yuan2019RandomizedTensor}
Yuan, L., Li, C., Cao, J., Zhao, Q.: Randomized tensor ring decomposition and
  its application to large-scale data reconstruction.
\newblock In: ICASSP 2019 - 2019 IEEE International Conference on Acoustics,
  Speech and Signal Processing (ICASSP), pp. 2127--2131. IEEE, Brighton
  Conference Centre Brighton, U.K. (2019)

\bibitem{zhao2016TensorRing}
Zhao, Q., Zhou, G., Xie, S., Zhang, L., Cichocki, A.: Tensor ring
  decomposition.
\newblock arXiv preprint arXiv:1606.05535  (2016)

\end{thebibliography}

\end{sloppypar}
\end{document}